\definecolor{darkred}{rgb}{0.5,0,0}
\definecolor{darkgreen}{rgb}{0,0.5,0}
\definecolor{darkblue}{rgb}{0,0,0.5}
\theoremstyle{plain}
\theoremstyle{remark}
\newcommand\M{\mathcal{M}}
\renewcommand\M{\mathcal{M}}
\renewcommand\L{\mathcal{L}}
\newcommand{\cS}{\mathcal{S}}
\newcommand{\XX}{\mathcal{X}}
\newcommand{\R}{\mathbb{R}}
\newcommand{\C}{\mathbb{C}}
\newcommand{\cC}{\mathcal{C}}
\renewcommand{\ss}{\on{ss}}
\newcommand{\Z}{\mathbb{Z}}
\newcommand{\Q}{\mathbb{Q}}
\renewcommand{\P}{\mathbb{P}}
\newcommand{\cP}{\mathcal{P}}
\newcommand{\bA}{\mathbb{A}}
\newcommand{\Ct}{\mathbb{C}^{\times}}%
\newcommand\lie[1]{\mathfrak{#1}}
\newcommand{\g}{\lie{g}}
\renewcommand{\t}{\lie{t}}
\newcommand{\on}{\operatorname}
\newcommand{\com}{\on{com}}
\newcommand{\coh}{\on{coh}}
\newcommand{\Jac}{\on{Jac}}
\newcommand{\Bl}{\on{Bl}}
\newcommand{\Ch}{\on{Ch}}
\newcommand{\vir}{{\on{vir}}}
\newcommand{\Td}{\on{Td}}
\newcommand{\Coh}{\on{Coh}}
\newcommand{\dual}{\vee}
\newcommand{\loc}{{\on{loc}}}
\newcommand{\Ver}{\on{Vert}}
\newcommand{\End}{\on{End}}
\newcommand{\fin}{\on{fin}}
\newcommand{\Aut}{ \on{Aut} } 
\newcommand{\Ob}{ \on{Ob} }
\newcommand{\aut}{ \on{aut} }
\newcommand{\Hom}{ \on{Hom}}
\newcommand{\Ind}{ \on{Ind}}
\newcommand{\fix}{{\on{fix}}}
\newcommand{\mov}{{\on{mov}}}
\newcommand{\Resid}{\on{Resid}}
\newcommand{\Restr}{\on{Restr}}
\newcommand\dirac{/\kern-1.2ex\partial} 
\newcommand\qu{/\kern-.7ex/} 
\newcommand\lqu{\backslash \kern-.7ex \backslash} 
\newcommand\dr{r_+ \kern-.7ex - \kern-.7ex r_-}
\newcommand{\cO}{{\mathcal O}}
\newcommand{\ol}[1]{\overline{#1}}
\newcommand{\tensor}{\otimes}
\newcommand{\inject}{\hookrightarrow}
\renewcommand{\comment}[1]   {{}}
\newcommand{\labell}\label
\renewcommand{\d}{{\on{d}}}
\newcommand{\ovl}{\overline}
\newcommand\eps{\epsilon}
\newcommand{\lan}{\langle}
\newcommand{\ran}{\rangle}
\newcommand{\ti}{\tilde}
\newcommand\pt{\on{pt}}
\newcommand\Def{\on{Def}}
\newcommand\cE{\mathcal{E}}
\newcommand\cL{\mathcal{L}}
\newcommand\cF{\mathcal{F}}
\newcommand\mE{\mathcal{E}}
\newcommand\mT{\mathcal{T}}
\newcommand\MM{\mathfrak{M}}
\newcommand\Map{\on{Map}}
\newcommand\Sym{\on{Sym}}
\newcommand\rank{\on{rank}}
\newcommand\ev{\on{ev}}
\newcommand\Eul{\on{Eul}}
\newcommand\Pic{\on{Pic}}
\newcommand\ul{\underline}
\newcommand\mO{\mathcal{O}}
\newcommand\bdefn{\begin{definition}}
\newcommand\edefn{\end{definition}}
\newcommand\bea{\begin{eqnarray*}}
\newcommand\eea{\end{eqnarray*}}
\newcommand\bcv{\left[ \begin{array}{r} }
\newcommand\ecv{\end{array} \right] }
\newcommand\bma{\left[ \begin{array} }
\newcommand\ema{\end{array} \right]}
\newcommand\ben{\begin{enumerate}}
\newcommand\een{\end{enumerate}}
\newcommand\beq{\begin{equation}}
\newcommand\eeq{\end{equation}}
\newcommand\bex{\begin{example}}
\newcommand\bsj{\left\{ \begin{array}{rrr} }
\newcommand\esj{\end{array} \right\}}
\newcommand\Cone{\on{Cone}}
\newcommand\LL{\mathcal{L}}
\newcommand\eex{\end{example}}
\newcommand\Crit{{\on{Crit}}}
\newcommand\sx{*\kern-.5ex_X}
\renewcommand{\Def}{{\on{Def}}}
\newcommand{\Obs}{{\on{Obs}}}
\newcommand{\cT}{{\mathcal{T}}}
\def\mathunderaccent#1{\let\theaccent#1\mathpalette\putaccentunder}
\def\putaccentunder#1#2{\oalign{\(#1#2\)\crcr\hidewidth \vbox
to.2ex{\hbox{\(#1\theaccent{}\)}\vss}\hidewidth}}
  \theoremstyle{plain}
  \newtheorem{theorem}{Theorem}[section]
  \newtheorem{lemma}[theorem]{Lemma}  
\newtheorem{proposition}[theorem]{Proposition}
  \newtheorem{assumption}[theorem]{Assumption}
  \newtheorem*{theorem*}{Theorem}
  \theoremstyle{definition}
  \newtheorem{definition}[theorem]{Definition}
  \newtheorem{example}[theorem]{Example}
  \theoremstyle{remark}
  \newtheorem*{remark}{Remark}
\begin{document}




\author[E. Gonz\'alez and C. Woodward]{E. Gonz\'alez and C. Woodward}



\address{
Department of Mathematics \\
University of Massachusetts Boston \\
100 William T. Morrissey Boulevard \\
Boston, MA 02125 \\
eduardo@math.umb.edu}


\address{Mathematics-Hill Center \\
Rutgers University, 110 Frelinghuysen Road \\ Piscataway, NJ
08854-8019 \\
ctw@math.rutgers.edu}

\title{Quantum Kirwan for quantum K-theory}

\begin{abstract}  
  For $G$ a complex reductive group and $X$ a smooth projective or
  convex quasi-projective polarized $G$-variety we construct a formal
  map in quantum K-theory
  \[ \kappa_X^G: QK_G^0(X) \to QK^0(X \qu G) \]
  from the equivariant quantum K-theory $QK_G^0(X)$ to the quantum
  K-theory of the geometric invariant theory quotient $X \qu G$, assuming the quotient
  $X \qu G$ is a smooth Deligne-Mumford stack with projective coarse
  moduli space.  As an example, we give a presentation of the
  (possibly bulk-shifted) quantum K-theory of any smooth proper toric
  Deligne-Mumford stack with projective coarse moduli space,
  generalizing the presentation for \label{grammar} quantum K-theory of projective
  spaces due to Buch-Mihalcea \cite{buch:qk} and (implicitly) of
  Givental-Tonita \cite{giv:toric}.  \label{reclause} We also provide
  a wall-crossing formula for the K-theoretic gauged potential
\[ \tau_X^G: QK_G^0(X) \to \Lambda_X^G \] 
under variation of geometric invariant theory quotient, a proof of the invariance of
$\tau_X^G$ under (strong) crepant transformation assumptions, and a
proof of the abelian non-abelian correspondence relating $\tau_X^G $
and $\tau_X^T$ for $T \subset G$ a maximal torus. 
\end{abstract}

\thanks{Partially supported by NSF grants DMS-1510518 and DMS-1711070. 
  Any opinions, findings, and conclusions or recommendations expressed 
  in this material are those of the author(s) and do not necessarily 
  reflect the views of the National Science Foundation.  }
\maketitle

  \tableofcontents


\section{Introduction}

We aim to describe the behavior of quantum K-theory under the operation of
geometric invariant theory quotient.  Let \(X\)
be a smooth projective \(G\)-variety
with polarization \(\L\to X\).
The geometric invariant theory (git) quotient \(X \qu G\)
is a proper smooth Deligne-Mumford stack with projective coarse moduli
space.  Let \(K_G^0(X)\)
denote the even topological or algebraic \(G\)-equivariant
\(K\)-cohomology
of \(X\).
The {\em Kirwan map} is the ring homomorphism in \(K\)-theory
(``K-theoretic reduction'')
\begin{equation} \label{KR} \kappa_{X}^G: K_G^0(X) \to K^0(X \qu
  G), \quad [E] \mapsto [(E | X^{\ss}) / G ] 
\end{equation}
obtained by restricting a vector bundle $E$ to the semistable locus
\(X^{\ss}\)
and passing to the stack quotient.  If \(X\)
is projective then Kirwan showed that \eqref{KR} is surjective in
rational cohomology \cite{ki:coh}.  The analogous results in
\(K\)-theory
hold by work of Harada-Landweber \cite[Theorem 3.1]{har:surj} and
Halpern-Leistner \cite[Corollary 1.2.3]{hal:der}.  The Kirwan map can
often be used to compute the K-theory of a git quotient.  In
particular, the Kirwan map allows a simple presentation of the
K-theory of a smooth projective toric variety, equivalent to the
presentation given in Vezzosi-Vistoli \cite[Section 6.2]{vv}.

A quantum deformation of the K-theory ring was introduced by Givental
\cite{giv:wdvv} and Y.P. Lee \cite{lee:qk1}. 
 In this deformation, the
tensor product of vector bundles is replaced by a certain push-pull
over the moduli space of stable maps.  The virtual fundamental class
in cohomology is replaced by a {\em virtual structure sheaf}
introduced in \cite{lee:qk1}, and integrals over the moduli space of
stable maps in K-theory are called {\em \(K\)-theoretic
  Gromov-Witten invariants}.  For many purposes quantum K-theory is
expected to be more natural than the quantum cohomology ring, which
can be obtained as a limit; see for example \cite{ok}. \label{okref}  In
particular, the \(K\)-theoretic
Gromov-Witten invariants are integers.  Computations in quantum
K-theory have been rather rare; even the quantum K-theory of
projective space seems to have been computed only recently by
Buch-Mihalcea \cite{buch:qk}.  

We develop a quantum version of Kirwan's map in K-theory.  As
applications, we give presentations of the quantum K-theory of toric
varieties, generalizing a computation of Buch-Mihalcea \cite{buch:qk}
in the case of projective spaces.   Let
\[ \Lambda_X^G \subset \Map(H_2^G(X,\Q),\Q) \] 
denote the equivariant Novikov ring associated to the polarization,
with $q^d$ denoting for $q$ a formal variable the delta function at
$d \in H_2^G(X,\Q)$.  \label{qd} The quantum $K$-theory product is
defined by Givental-Lee \cite{giv:wdvv,lee:qk1} as a pull-push over
moduli spaces of stable maps; in order to make the products finite we
define the equivariant quantum K-theory as the completion of
$K^0_G \otimes \Lambda_X^G$ with respect to the ideal $I_G^X(c)$ of
elements $E \in K^0_G \otimes \Lambda_X^G$ with $\on{val}_q(E) > c$:
\begin{equation} \label{completion}
 QK_G^0(X) = \lim_{n \leftarrow} (K_G^0(X) \otimes \Lambda_X^G)/ I_G^X(c)^n.
\end{equation}
\label{defined}The main result is the following:

\begin{theorem} Let $G$ be a complex reductive group and $X$ be a
  smooth polarized projective (or convex quasiprojective) $G$-variety
  with locally free git quotient $X \qu G$.  There exists a canonical
  {\em Kirwan map} in quantum K-theory
\[ \kappa_X^G: QK_G^0(X) \to QK^0(X \qu G) \]
with the property that the linearization \(D_\alpha
\kappa_X^G\) is a homomorphism:
\[ 
  D_\alpha \kappa_X^G (\beta \star \gamma) = D_\alpha
\kappa_X^G (\beta) \star D_\alpha \kappa_X^G (\gamma) .
\]
If \(X \qu G\)
is a free quotient then \(\kappa_X^G\)
is surjective.\footnote{Computations suggest that the map $\kappa_X^G$
  might be surjective even for locally free quotients of proper free
  actions.}
\end{theorem}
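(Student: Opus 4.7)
The plan is to adapt the construction of the quantum Kirwan map in cohomology (via moduli of affine gauged maps) to the K-theoretic setting, using virtual structure sheaves in place of virtual fundamental classes. First I would introduce moduli spaces $\overline{\M}_n^G(\mathbb{A},X,d)$ of stable \emph{affine gauged maps}: data of a principal $G$-bundle $P \to \mathbb{P}^1$ (possibly bubbled), a section $u$ of the associated $X$-bundle, a scaling, and a framing at $\infty \in \mathbb{P}^1$ that identifies the fiber of $P$ there with the trivial bundle so as to land in $X \qu G$ after reduction, together with $n$ marked points on the affine part. Standard arguments (following Ciocan-Fontanine--Kim--Maulik in the quasimap setting, and Halpern-Leistner's derived category results for properness of the evaluation at $\infty$) should equip these stacks with a perfect obstruction theory and a virtual structure sheaf $\mathcal{O}^{\vir}$.

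Second, I would define $\kappa_X^G$ as a formal sum over curve classes $d \in H_2^G(X,\Q)$ of push-pulls along the evaluation maps:
\[
\kappa_X^G(\alpha) = \sum_{n, d} \frac{q^d}{n!}\,(\ev_\infty)_* \Bigl( \mathcal{O}^{\vir}_{\overline{\M}_n^G(\mathbb{A},X,d)} \otimes \prod_{i=1}^n \ev_i^* \alpha \Bigr),
\]
where $\ev_\infty$ lands in $X \qu G$ and $\ev_i$ lands in $[X/G]$. Convergence as an element of $QK^0(X \qu G)$ follows from the Novikov completion in \eqref{completion}, since for each $c$ only finitely many $d$ contribute to a given truncation. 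The linearized homomorphism property $D_\alpha \kappa_X^G(\beta \star \gamma) = D_\alpha \kappa_X^G(\beta) \star D_\alpha \kappa_X^G(\gamma)$ is the K-theoretic analog of a splitting axiom: it is to be established by studying a one-parameter family of moduli spaces interpolating between a configuration with two marked points on the same affine component and a configuration where a sphere bubble has separated, and applying the K-theoretic pushforward/gluing identity (virtual structure sheaves behave well under normalization of nodal curves, cf. Lee \cite{lee:qk1}). The scaling parameter, analogous to Mundet/Ziltener scaling in the symplectic setting, is what forces the correct limit to produce the product on $QK^0(X \qu G)$ on one end and the gauged product on $QK_G^0(X)$ on the other.

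Third, for surjectivity under a free quotient, I would argue by the Novikov filtration. Modulo $I_G^X(c)$ for $c > 0$, the map $\kappa_X^G$ reduces to the classical Kirwan map $K_G^0(X) \to K^0(X \qu G)$ of \eqref{KR}, which is surjective by Harada-Landweber \cite{har:surj} and Halpern-Leistner \cite{hal:der}. A Nakayama-type successive approximation then lifts classes to all orders in $q$: given $\overline{\beta} \in K^0(X \qu G)$ lift it to $\beta_0 \in K_G^0(X)$ with $\kappa_X^G(\beta_0) - \overline{\beta} \in q \cdot QK^0(X \qu G)$, then correct by a $q$-higher-order class, and iterate, using completeness of the Novikov ring.

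The main obstacle is the construction of the virtual structure sheaf on $\overline{\M}_n^G(\mathbb{A},X,d)$ and the verification of the K-theoretic splitting/factorization axiom under degeneration of the scaling. Unlike in cohomology, K-theoretic pushforwards do not commute freely with restriction to boundary strata; one needs the virtual structure sheaf to be compatible with the normalization sequence on nodal domains, and one must track the (possibly nontrivial) twists coming from the framings at $\infty$ and at the glued node. Once that compatibility is in place, the remainder of the argument follows the cohomological template.
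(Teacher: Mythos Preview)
Your outline captures the right architecture---affine gauged maps, virtual structure sheaves, a one-parameter degeneration argument for the homomorphism property, and Nakayama for surjectivity---but there is a genuine gap in the definition of $\kappa_X^G$ itself that would cause the homomorphism proof to fail.

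In quantum K-theory the pairing $B_\alpha$ (equation \eqref{pairing}) carries quantum corrections, unlike in cohomology. Concretely, the paper packages these corrections into a Maurer-Cartan map $\mathcal{MC}_{X \qu G}$ (equation \eqref{T}) and defines
\[
\kappa_X^G(\alpha) = \mathcal{MC}_{X \qu G}^{-1} \sum_{n,d} \frac{q^d}{n!}\, \ev_{0,*}^d\bigl(\ev_1^*\alpha \otimes \cdots \otimes \ev_n^*\alpha\bigr),
\]
with a matching inverse factor $(D_{\kappa_X^G(\alpha)}\mathcal{MC}_{X \qu G})^{-1}$ in the linearization $D_\alpha\kappa_X^G$. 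Your definition omits this prefactor. The reason it is forced: when you compare the fibers of the forgetful map $f_2:\overline{\M}_n^G(\bA,X)\to\overline{\M}_2(\bA)\cong\P^1$ over $0$ and $\infty$, the boundary divisors are normal-crossing unions, and in K-theory the structure sheaf of such a union is an alternating sum over intersections of strata (Lee's K-theoretic splitting, \cite[Proposition~11]{lee:qk1}). On the $\infty$-side these intersections correspond to chains of rational curves of arbitrary length $k$ connecting the $z_0$-component to the affine components, and their alternating contributions assemble exactly into $\sum_{k\ge 0}(-1)^k(D_\alpha\mathcal{MC}_{X \qu G}-I)^k = (D_\alpha\mathcal{MC}_{X \qu G})^{-1}$ (equation \eqref{contributes}). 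Without building this inverse into the definition, the two sides of the degeneration do not match and $D_\alpha\kappa_X^G$ is not multiplicative.

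You correctly flag that K-theoretic pushforwards interact nontrivially with boundary restriction, but the resolution is not merely a compatibility check on the virtual sheaf---it is a change in the definition of the map. Your surjectivity argument via the Novikov filtration and classical Kirwan surjectivity is fine and matches the paper's approach.
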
 

The convexity assumption is satisfied if, for example, the variety is
a vector space with a torus action such that all weights are properly
contained in a half-space.  For example, for toric varieties that may
be realized as git quotients we explicitly compute the kernel of the
map to obtain a presentation of the orbifold quantum K-theory at a
point determined by the presentation, generalizing the presentation of
ordinary K-theory of non-singular toric varieties due to
Vezzosi-Vistoli \cite[Section 6.2]{vv} and the case of quantum
K-theory of projective spaces due to Buch-Mihalcea \cite{buch:qk}: Let
$G$ be a complex torus and $X$ a vector space with weight spaces
$X_1,\ldots, X_k$ and weights $\mu_1,\ldots, \mu_k$ define the {\em
  completed equivariant quantum K-theory} $\widehat{QK}_G^0(X)$ to be
the ring with generators $X_1^{\pm 1},\ldots, X_k^{\pm 1}$ formally
completed by the ideal generated by
$ (1 - X_j^{-1}), j = 1,\ldots,k$.\footnote{In the case of orbifold
  quotients, a more complicated formal completion is necessary, see
  Definition \ref{formal}.}  The K-theoretic Batyrev (or quantum
K-theoretic Stanley-Reisner) ideal is the ideal $QKSR_X^G$ generated
by the relations
\begin{equation} \label{qsrr} \prod_{(\mu_j,d) \ge 0} (1 - X_j^{-1})^{\mu_j(d)} = q^d
\prod_{(\mu_j,d) < 0} (1 - X_j^{-1})^{-\mu_j(d)} .\end{equation}

\begin{theorem} \label{tmain} Suppose that $G$ is a torus with Lie
  algebra $\g$, that $X$ is a $G$-vector space with weights
  $\mu_1,\ldots,\mu_k \in \g_\R^\dual$ contained in an open half-space
  in $\g_\R^\dual$, and that $X$ is equipped with a polarization so
  that $X \qu G$ is a non-singular proper toric Deligne-Mumford stack
  with projective coarse moduli space.  Let $T = (\C^\times)^k/G$
  denote the residual torus.  Then the quantum $K$-theory ring
  $QK^0(X \qu G)$ with bulk deformation $\kappa_X^G(0)$ is isomorphic to
  the quotient $\widehat{QK_G^0(X)}/QKSR_X^G$.
\end{theorem}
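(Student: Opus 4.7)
The plan is to deduce the presentation from the existence of the quantum Kirwan map $\kappa_X^G$ provided by the preceding theorem, combined with a direct K-theoretic localization calculation on the moduli of affine gauged maps. Since $X$ is a convex $G$-vector space and $X\qu G$ is a smooth proper Deligne-Mumford stack with projective coarse moduli, the preceding theorem yields $\kappa_X^G$ whose linearization $D_0\kappa_X^G$ at the origin is a ring homomorphism with respect to the quantum K-product on $QK^0(X\qu G)$ bulk-deformed by $\kappa_X^G(0)$, and which is surjective (using the locally-free extension indicated in the footnote to that theorem). Consequently $QK^0(X\qu G) \iso \widehat{QK_G^0(X)}/\ker D_0\kappa_X^G$, and the problem reduces to identifying $\ker D_0\kappa_X^G$ with the ideal $QKSR_X^G$ generated by the relations \eqref{qsrr}.

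The first step is to verify that $QKSR_X^G \subseteq \ker D_0\kappa_X^G$. I would unpack the definition of $\kappa_X^G$ as a generating series, weighted by $q^d$ for $d \in H_2^G(X,\Q)$, of K-theoretic pushforwards of virtual structure sheaves from the moduli stack of affine scaled gauged maps from $(\bA^1,\infty)$ to $[X/G]$. For a torus acting linearly on a vector space this moduli stack carries a rotation $\C^\times$-action whose fixed locus is an explicit union of products of weighted projective stacks indexed by integer splittings of the class $d$ according to the weights $\mu_j$. Virtual equivariant K-theoretic localization, with Kawasaki-Riemann-Roch corrections for the orbifold structure, then produces the identity
\[
\kappa_X^G\!\left(\prod_{(\mu_j,d)\ge 0}(1-X_j^{-1})^{\mu_j(d)}\right) = q^d\,\kappa_X^G\!\left(\prod_{(\mu_j,d)< 0}(1-X_j^{-1})^{-\mu_j(d)}\right),
\]
the factor $(1-X_j^{-1})^{\pm \mu_j(d)}$ being exactly the K-theoretic Euler factor of the weight space $X_j$ that appears in the normal bundle contribution at the corresponding fixed locus. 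Carrying out this localization calculation carefully, in particular verifying the sign conventions and the cancellation of the Kawasaki corrections between the two sides, is the main obstacle.

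The second step is the reverse inclusion $\ker D_0\kappa_X^G \subseteq QKSR_X^G$, which I would establish by a topological Nakayama-type argument. Both $\widehat{QK_G^0(X)}/QKSR_X^G$ and $QK^0(X\qu G)$ are complete with respect to the Novikov $I_G^X(c)$-adic topology, so the induced surjection between them is an isomorphism iff it is so modulo the Novikov ideal $I_G^X(1)$. Setting $q=0$ reduces the quantum Kirwan map to the classical Kirwan map \eqref{KR} and the relations \eqref{qsrr} to the classical K-theoretic Stanley-Reisner relations, at which point the desired bijectivity is the content of the presentation of Vezzossi-Vistoli \cite[Section 6.2]{vv}. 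The only remaining subtlety is to reconcile the orbifold completion of Definition \ref{formal} with the $(1-X_j^{-1})$-adic completion used in the classical presentation, which follows from the finite-stabilizer hypothesis on the action at the semistable locus.
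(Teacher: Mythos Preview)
Your first step, the inclusion $QKSR_X^G \subseteq \ker D_0\kappa_X^G$, takes a different route from the paper. Rather than torus localization on the moduli of affine gauged maps, the paper (Proposition~\ref{kernelcontains}) constructs explicit sections $\sigma_{i,j}$ of $\ev_1^*X_j$ from the derivatives of the gauged map at the marking, identifies their common vanishing locus with a shifted moduli space via the map~\eqref{Dmap}, and compares K-theoretic Euler classes directly. An alternative via divided-difference operators acting on the $I$-function is sketched in Example~\ref{tdiff}. Your localization outline is plausible but is not what the paper does, and you yourself flag the verification as the main obstacle.

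The second step has a genuine gap in the orbifold case. Your Nakayama argument hinges on the claim that the induced map $\widehat{K_G^0(X)}/KSR_X^G \to K^0(X\qu G)$ at $q=0$ is an isomorphism, which you attribute to Vezzosi--Vistoli. But when $X\qu G$ has nontrivial isotropy this fails: the target of $D_0\kappa_X^G$ is built on the K-theory of the rigidified inertia stack, and the twisted sectors are reached only through genuinely quantum contributions. Formula~\eqref{D0} makes this explicit, with the twisted-sector class $[\mathcal{O}_{I_{X\qu G}(\exp d)}]$ appearing with leading coefficient $q^d$ for $d\neq 0$; in the $B\mathbb{Z}_2$ example one has $(1-X^{-1})\mapsto \sqrt{q}\,\delta_{-1}$, so modulo $q$ this element maps to zero and the twisted sector is invisible. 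Vezzosi--Vistoli only treats varieties, and even a stacky extension does not repair this, since the defect is in the $q=0$ limit of $D_0\kappa_X^G$, not in the classical presentation. Relatedly, your appeal to surjectivity of $D_0\kappa_X^G$ from the preceding theorem is only justified for free quotients; the locally-free extension you invoke from the footnote is stated there as a conjecture, not a result.

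The paper's argument avoids the $q=0$ reduction entirely. Surjectivity is established directly from~\eqref{D0}, and the reverse inclusion is a dimension count: one identifies $\widehat{QK_G^0(X)}/QKSR_X^G$ with the ring of functions on the formal critical locus $\widehat{\Crit(W)}$ of the Givental potential on the dual torus, and then matches $\dim\widehat{\Crit(W)}$ with $\dim QK(X\qu G)$ by tracking both quantities through the flips and Mori fibrations of the toric minimal model program.
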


\begin{example} \label{wps} {\rm (Weighted projective spaces)} Suppose
  that \(G = \C^\times\)
  acts on \(X = \C^k\)
  with weights \(\mu_1,\ldots, \mu_k \in \Z\),
  so that \(X \qu G\)
  is the weighted projective space \(\P(\mu_1,\ldots, \mu_k)\).
  Then the \(T\)-equivariant
  quantum K-theory of \(X \qu G\)
  has canonical presentation with generators and a single relation (in
  this case the formal completion is not necessary):
\begin{equation} \label{present} 
QK^0(X \qu G) \cong
\frac{ \Lambda_X^G[ X_1^{ \pm 1}, \ldots, X_k^{\pm 1} ] } { 
 \big\lan \prod_j (1 - X_j^{-1})^{\mu_j(d)} -  q  \big\ran } .\end{equation}
In this case, the bulk deformation \(\kappa_X^G(0)\) turns
out to vanish, see Lemma \ref{vanish} below, and \(X_j\) is
the class of the line bundle associated to the weight
\(\mu_j\).  \end{example} 

\begin{example} {\rm (\(B\Z_2\))}
  This is a sub-example of the previous Example \ref{wps}; we include
  it to emphasize the importance of working over the equivariant
  Novikov ring.  Suppose that \(G = \C^\times\)
  acts on \(X = \C\)
  with weight two.  Then the quantum \(K\)-theory
  of \(X \qu G = B\Z_2\)
  has generators \(X^{\pm 1}\)
  with single relation 
\[ QK(B\Z_2) = \frac{ \Z[ X^{\pm 1}, q  ] }{ 
\big\lan  ( 1 - X^{-1})^2  -  q \big\ran }  . \] 
On the other hand, without the equivariant Novikov ring
\(K(B\Z_2;\Z)\)
is simply \label{simply} the group ring on \(\Z_2\)
via the identification of representations with their characters.
Let \label{deltapm}
\[ \delta_{\pm 1} \in K(B\Z_2;\Z) \] 
be the delta functions at the group elements \(\pm 1 \in \Z_2\).
Proposition \ref{kernelcontains} below shows that \((1 - X^{-1})\)
maps to \(\sqrt{q} \delta_{-1}\)
under $D_0 \kappa_X^G$.  \label{noQ} This implies the relation
\[ (\sqrt{q} \delta_{-1})^2 - q = q \delta_{(-1)(-1)} - q = 0 \]
since \(\delta_1\) is the identity.  This matches with the relation
\[ \delta_{-1}^2 = \delta_1 \]
in $K(B\Z_2)$, the group algebra of $\Z_2$ since the product is given
by convolution. \label{convolution}
\end{example} 

Since the relations are essentially the same as those in quantum
cohomology, one obtains a generalization of \label{grammar2} the
isomorphism between K-theory and cohomology of toric varieties induced
by identifying cohomological and K-theoretic first Chern classes of
divisors in Vezzosi-Vistoli \cite[Section 6.2]{vv}: The quantum
\(K\)-theory
ring at bulk deformation \(\kappa_X^G(0) \in QK^0(X \qu G)\)
is canonically isomorphic to the quantum cohomology ring $QH(X \qu G)$
at bulk deformation \( \kappa_X^{G,\coh}(0) \in QH(X \qu G) \)
(where \(\kappa_X^{G,\coh}: QH_G(X) \to QH(X \qu G)\)
is the cohomological quantum Kirwan map) via a map defined on
generators by
\[
QK^0(X \qu G) \to QH(X \qu G), \quad 
D_\alpha \kappa_X^G(X_j- 1) 
\mapsto D_\alpha \kappa_X^{G,\coh} (c_1(X_j)) .
\]
In particular, the quantum K-theory of toric Deligne-Mumford stacks is
generically semisimple.

We thank Ming Zhang for helpful comments and an anonymous referee for
pointing out an important omission in the orbifold case.

\section{Equivariant quantum K-theory} 

We recall the following basics of equivariant quantum K-theory,
following Buch-Mihalcea \cite{buch:qk} and Iritani-Milanov-Tonita
\cite{iri:qkt}.  Let \(X\)
be a \(G\)-variety.
The {\em equivariant K-homology group} \(K_0^G(X)\)
is the Grothendieck group of coherent \(G\)-sheaves
on \(X\),
that is, the free Abelian group generated by isomorphism classes of
\(G\)-equivariant
coherent sheaves modulo relations whenever there exists an equivariant
exact sequence: For $G$-equivariant sheaves
$\cF,\cF',\cF''$ \label{threeFs} we have
an implication
\[ 
0 \to
\cF' \to \cF \to \cF'' \to 0 \quad \implies \quad 
[\cF] = [\cF'] +
[\cF''] . \]   
The K-homology $K_G^0(X)$ is naturally a module over the {\em
  equivariant \(K\)-cohomology
  ring} \(K_G^0(X)\)
of \(G\)-equivariant
vector bundles on \(X\).
Both the multiplicative structure of \(K_G^0(X)\)
and the module structure of \(K^G_0(X)\)
are given by tensor products.  If \(X\)
is non-singular then the map from $K_G^0(X) $ to $ K^G_0(X)$
that sends a vector bundle to its sheaf of sections is an isomorphism.

Equivariant K-theory has the following functoriality properties.
Given a \(G\)-equivariant
morphism \(f: X \to Y\)
between varieties $X,Y$ there is a ring homomorphism
\[ f^* : K^0_G(Y) \to K_G^0(X), \quad [E] \mapsto [f^* E] \]
given by pull-back of bundles.  If \(f\)
is proper then there is a pushforward map
\[f_*: K^G_0(X) \to K^G_0(Y), \quad f_* [\cF] \mapsto \sum_{i \ge 0}
(-1)^i [R^i f_* \cF] .\]
This map is a homomorphism of \(K^0_G(Y)\)
modules by the projection formula.

The quantum product in K-theory is defined by incorporating
contributions from moduli spaces of stable maps.  Let \(X\)
be a smooth projective \(G\)-variety.
For integers \(g,n\ge 0\) and a class \(d \in H_2(X,\Z)\) let
\[ \ovl{\M}_{g,n}(X,d) = \Set{( u: C \to X, \ul{z} \in C^n ) \ |
\begin{array}{l} \#\Aut(u,\ul{z}) < \infty,  \\ g(C) = g, \ u_*[C] =
  d \end{array} } \]
denote the moduli stack of stable maps to \(X\)
with \(n\)
markings, genus \(g\),
and homology class \(d\).  The evaluation maps are denoted
\[ \ev = (\ev_1,\ldots,\ev_n): \ovl{\M}_{g,n}(X,d) \to X^n .\]
Recall that a \emph{perfect obstruction theory \(E^\bullet\)
  admitting a global resolution by vector bundles} on a stack $\M$ is
a pair $(E,\phi)$ consisting of 
an object 
of the bounded derived category of coherent sheaves on \(\M\)
that can be presented as a two term complex
\[ E = [E^{-1}\to E^{0}]\in D^{[-1,0]} (\on{Coh} (\M)) \] 
of vector bundles, together with a morphism
\[ \phi:E \to L_\M, \quad h^0(\phi) \ \text{iso}, \quad h^{-1}(\phi) \
\text{epi} \]
to the ( \(L^{ \geq -1 } \)
truncation of the) cotangent complex of \(L_\M\),
satisfying that (c.f.  \cite{bf:in}, \cite{gr:loc}) \(h^0(\phi)\)
is an isomorphism and \(h^{-1}(\phi)\)
is an epimorphism. The perfect obstruction theory defines the
\emph{virtual tangent bundle}
\[
T^\vir_\M=
\on{Def}-\on{Obs} 
\in
K^0(\M),  \quad \Def :=  [(E^{0})^{\dual}] ,  \quad \Obs := [(E^{-1})^{\dual}]  \]
as in \cite[p. 21]{coates:thesis}.  That is, the virtual tangent
bundle is the K-theoretic difference of the deformation space and the
obstruction space.  The virtual normal cone
\(C \inject E_1=(E^{-1})^{\dual}\)
induces the \emph{virtual structure sheaf } \cite{lee:qk1} as the
derived tensor product
\[
  \cO^{\vir}_\M := \cO_\M \bigotimes^L_{\cO_{E_1}} \cO_C 
\in \Ob( D \Coh(\M)), 
\]
whose class in \(K(\M)\) is 
\[
[\cO^{\vir}_\M] = \sum_{i=0}^\infty (-1)^{i} \on{Tor}^i_{\cO_{E_1}} (\cO_\M, \cO_{C})
\in K(\M) .
\]
For any class \(\alpha\in K(\M)\)
we define the \emph{virtual Euler characteristic}
\[
\chi^\vir(\M;\alpha)=\chi (\M; \alpha\otimes \cO^{\vir}_\M) \in \Z
\]
the Euler characteristic after twisting by \(\cO^\vir_\M\).
These constructions admit equivariant generalizations, so that for any
genus $g \ge 0$ and markings $n \ge 0$ with $\M = \ovl{\M}_{g,n}(X,d)$
the {\em virtual structure sheaf} \(\cO^{\vir}_\M \)
is an object in the $G$-equivariant bounded derived category for
\(\ovl{\M}_{g,n}(X,d)\)
introduced by Y.P. Lee \cite{lee:qk1}.  It defines a class
\([\cO^\vir]\)
in the equivariant \(K\)-theory
of \(\ovl{\M}_{g,n}(X,d)\).
For classes $\alpha_1,\dots, \alpha_n \in K^0_G(X)$ and a class
$\beta \in K(\ol{\M}_{g,n})$ define the equivariant \(K\)-theoretic
Gromov-Witten invariants
\begin{equation} \label{kgw}
  \langle \alpha_1,\dots, \alpha_n ; \beta \rangle_{g,n,d} := \chi_G(
  \ev_1^* \alpha_1 \otimes \ldots \otimes \ev_n^* \alpha_n \otimes f^*
  \beta \otimes [\cO^{\vir}]), \end{equation} 
  where \(\chi_G\)
  is the equivariant Euler characteristic.  In fact, because \(X\)
  is smooth, one may replace the algebraic K-cohomology group above by
  the topological equivariant K-cohomology, that is, the Grothendieck
  group of equivariant complex vector bundles on \(X\),
  as in \cite[Section 4]{lee:qk1}.  In this case the equivariant Euler
  characteristic is then replaced by a proper push-forward in
  topological K-theory.  Define {\em descendant invariants} by
\[ \langle \alpha_1 L^{d_1} ,\dots, \alpha_n L^{d_n} ; \beta \rangle_{g,n,d}
\in K^0_G(\pt)   \] 
defined by insertion $d_i$ cotangent lines $L$ at the $i$-th marking
$z_i$.

The K-theoretic Gromov-Witten invariants can be organized into a
potential as follows.  Let \(\Lambda_X \subset \Map(H_2(X),\Q)\)
denote the \emph{Novikov ring} associated to the ample line bundle
\(\LL\to X\).
The elements of the Novikov ring 
 are formal combinations
\[ \Lambda_X = \left\{ \sum_{d \in H_2(X)} c_d q^d  \right\}  \] 
such that for any \(E > 0\)
the number of coefficients \(c_d\)
with \((d , c_1(\LL)) < E\)
is finite.  Define the {\em equivariant quantum K-theory} as the
completion \eqref{completion}.  The K-theoretic {\em genus zero
  Gromov-Witten potential} with insertions is the formal function
which we write informally
\begin{equation}\label{eq:eqpot} \mu_X: QK_G^0(X) \to QK_G^0(\pt), \quad \alpha \mapsto \sum_{d
    \in H_2(X)} \sum_{n \ge 0}
  \lan \alpha,\ldots,\alpha ; 1 \ran_{0,n,d} \frac{q^d}{n!} ;
\end{equation}
what this means is that each Taylor coefficient of $\mu_X$ is
well-defined.  In the following expressions involving $\mu_X$ will be
understood in this sense.  For any element $\sigma \in QK_G^0(\pt)$ we
denote by $\partial_\sigma \mu_X $ the differentiation of $\mu_X $ in
the direction of $\sigma$.  The {\em quantum K-theory pairing} at
\(\alpha \in QK_G^0(X)\) is for $\sigma,\gamma \in QK_G^0(X)$
\begin{equation} \label{pairing} B_\alpha(\sigma,\gamma) =
  \partial_1 \partial_\sigma \partial_\gamma \mu_X(\alpha) \in QK_G^0(\pt) 
\end{equation}
where the identity in \(QK_G^0(X)\) is the structure sheaf
\(\cO_X\). This recovers the usual pairing
\(\chi(\sigma\tensor \gamma)\) when
\(\alpha=0, q=0\). Note that
the corresponding pairing in quantum cohomology is the
classical pairing; the existence of quantum corrections in
the K-theoretic pairing is due to a modification in the contraction
axioms \cite[Section 3.7]{lee:qk1}, this is an important new
feature of quantum K-theory. The {\em quantum K-theory
product} on \(QK_G^0(X)\) with bulk deformation \(\alpha\) is the
formal product defined by
\[ B_\alpha(\sigma \star_\alpha \gamma, \kappa) =
\partial_\sigma \partial_\gamma \partial_\kappa \mu_X(\alpha)
.\]
As in the product in quantum cohomology, for each choice of
\(\alpha \in QK_G^0(X)\) we obtain a formal Frobenius algebra
structure on \(QK_G^0(X)\), by an argument of Givental
\cite{giv:wdvv}.  Notably, the product does not satisfy a
divisor axiom.  However, quantum K-theory has better
properties in other respects.  For example, the small
quantum K-theory (product at \(\alpha = 0\)) is defined over
the integers, since the virtual Euler characteristics are
virtual representations.

Later we will need a slight reformulation of the
quantum-corrected inner product in \eqref{pairing}.  Let
\(\ev_{n+1}^d\) denote the restriction of the evaluation map
\[ \ev_{n+1}: \ovl{\M}_{0,n+1}(X) \to X \] 
to \(\ovl{\M}_{0,n+1}(X,d)\).  Define the (formal) {\em Maurer-Cartan map}
\begin{multline} \label{MCd}
   \mathcal{MC}_X^G: QK_G^0(X) \to QK_G^0(X) \\
  \alpha
                                         \mapsto  \sum_{n \ge 1,d \in H_2(X)} \ev_{n+2,*}^d \left( \ev_1^* \alpha
                                                   \otimes \ldots \otimes\ev_n^* \alpha  \otimes ev_{n+1}^* 1
                                                   \right)\frac{q^d}{n!}   \end{multline}
where the push-forward is defined using the virtual structure sheaf.
Then if $\ul{B}$ denotes the classical Mukai pairing we have
\label{mukai} 
\[ B_\alpha(\sigma,\gamma) = \ul{B}( D_\alpha \mathcal{MC}_X^G
(\sigma), \gamma), \quad \ul{B}^{-1} B_\alpha = D_\alpha
\mathcal{MC}_X^G,\]
where \(D_\alpha \mathcal{MC}_X^G\)
denotes the linearization of \(\mathcal{MC}_X^G\) at \(\alpha\)
\begin{equation*} \sigma \mapsto  \sum_{n \ge 1,d
    \in H_2(X)} \ev_{n+2,*}^d \left( \ev_1^*\sigma \otimes\ev_2^*
    \alpha \otimes \ldots \otimes\ev_n^* \alpha \otimes \ev_{n+1}^* 1
  \right)\frac{q^d}{n!}.
\end{equation*}

One can also consider twisted K-theoretic Gromov-Witten invariants as
in Tonita \cite{to:twisted}: Let
\begin{equation*}
\begin{tikzcd}[every arrow/.append style={-latex}]
  \ovl{\cC}_{g,n}(X,d) \arrow{r}{e} \arrow{d}{p}  & X  \\
\ovl{\M}_{g,n}(X,d). &
\end{tikzcd}	
\end{equation*}
denote the universal curve.  If \(E \to X\) is a
\(G\)-equivariant vector bundle then the index class is
defined by
\[ \Ind(E) := [Rp_* e^* E] \in K^0(\ovl{\M}_{g,n}(X,d)) .\]
Its Euler class \([\Eul(\Ind(E))]\)
is well-defined in $K_{\C^\times}^0(\ovl{\M}_{g,n}(X,d))$ after
localizing the equivariant parameter for the action of $\C^\times$ by
scalar multiplication on the fibers of $E$ at roots of unity.  
The genus $g = 0$ sum 
\begin{multline} \label{twisted} \varphi_{n,d}^E: K^0_{G \times
    \C^\times}(X)^{\otimes n} \to K^{0,\loc}_{G \times
    \C^\times}(\pt), \\ (\alpha_1,\ldots,\alpha_n) \mapsto \chi_{G
    \times \C^\times}( \ev_1^* \alpha_1 \otimes \ldots \otimes \ev_n^*
  \alpha_n \otimes [\cO^{\vir}] \otimes [\Eul(\Ind(E))])
\end{multline}
produces the {\em \(E\)-twisted quantum K-theory}, again a
Frobenius manifold.

An analog of the quantum connection in quantum K-theory was introduced
by Givental \cite{giv:wdvv}:With
$m(\alpha)(\cdot) = \alpha \star \cdot$ quantum multiplication 
 define a connection \label{nabla} 
\[ \nabla_\alpha^q = (1-{z}) \partial_\alpha +
m(\alpha) 
 \in \End(QK^0_{\C^\times}(X)) .\]
By Givental \cite{giv:wdvv} the quantum connection is flat. 
\comment{ Let
\(\{\Phi_i\}_{i=0,\dots,N}\)
be a basis for \(K(X)\)
(suppose for the moment we are speaking of topological K-theory) with
\(\Phi_0=\cO_X\)
and let \(\{t^i\}\) denote the dual coordinates.  Define a matrix
\[ T_{i j} = g_{i j} + \sum_{d \in H_2(X), n \ge 0}
\frac{q^d}{n!} \left\langle \Phi_i, t,\ldots, t, \frac{\Phi_j}{1 -
  {z} L} \right\rangle_{0,n+2,d} .\]
where \(g_{ij}=\chi(\Phi_i\otimes \Phi_j)\).
We write \(\ovl{T}_{i j}\)
for the function replacing \({z}\)
with \({z}^{-1}\).  Introduce the endomorphism-valued function
\[ T :
QK^0_{\C^\times}(X) \to \End(QK^0_{\C^\times}(X)), \quad 
B(\Phi_i, T \Phi_j) := \ovl{T}_{i j} .\]
By Givental \cite{giv:wdvv} and Lee \cite{lee:qk1} , the endomorphism
\(T\) gives a fundamental solution to the quantum differential
equation
\begin{equation} \label{fsol} 
 (1 - {z}) \partial_i T + \Phi_i \star T = 0
 .\end{equation} 
} One of the goals of this paper is to give (somewhat non-explicit)
formulas for its fundamental solutions.

\section{Quantum K-theoretic Kirwan map} 

In this section we extend the definition of the quantum Kirwan map,
defined in \cite{qkirwan} to K-theory.  Let $G$ be a complex reductive
group as in the introduction and let \(X\)
be a smooth polarized projective \(G\)-variety
with \(G\)-polarization,
that is, ample $G$-line bundle, \(\LL \to X\).
Suppose that \(G\)
acts with finite stabilizers on the semistable locus, defined as the
locus of points with non-vanishing invariant sections of some positive
power of the polarization:
\[ X^{\ss}=X^{\ss}(\LL) = \{ x \in X \ | \ \exists k > 0, s \in
H^0(X,\LL^{\otimes k})^G, \ s(x) \neq 0 \} \subset X . \]
Equivalently, suppose that every orbit $Gx \subseteq X^{\ss}$ \label{xss} for
$x \in X^{\ss}$ is closed.  Denote the stack-theoretic quotient
\[X \qu G := X^{\ss}/G\] 
which is necessarily a smooth proper Deligne-Mumford stack with
projective coarse moduli space.  By definition $X/G$ is the category
whose objects
\[ \Ob( X / G) = \{ (P \to C, u: P \to X) \} \]
are pairs consisting of principal $G$-bundles $P$ over some base $C$
and equivariant maps $u: P \to X$, and whose morphisms are the natural
commutative diagrams; $X^{\ss}/G$ is the sub-category of $X/G$ whose
objects $(P,u)$ have the property that $u$ takes values in the
semi-stable locus $X^{\ss}$.

\subsection{Affine gauged maps} 

The quantum Kirwan map is defined by push-forward (in cohomology by
integration) over moduli spaces of {\em affine gauged maps}.

\begin{definition} \label{affgauge} {\rm (Affine gauged maps)}  
  An affine gauged map to \(X/G\)
  is a datum $(C, \ul{z}, \lambda, u: C \to X/G)$ consisting of
\begin{itemize} 
\item {\rm (Curve)} a possibly-nodal projective curve \(p: C \to S\)
  of arithmetic genus $0$ over an algebraic space \(S\);
\item {\rm (Markings)} sections \( \ul{z} = (z_0,\ldots, z_n : S \to
  C) \) disjoint
  from each other and the nodes;
\item {\rm (One-form)} a section \(\lambda : C \to \P(\omega_{C/ S}
  \oplus \C)\) of the projective dualizing sheaf \(\omega_{C/S}\);
\item {\rm (Map)} a map \(u: C \to X/G\) to the quotient stack \(X/G\);
\end{itemize} 
satisfying the following conditions:
\begin{itemize} 
\item {\rm (Scalings at markings)} \(\lambda(z_0) = \infty\) and
  \(\lambda(z_i)\) is finite for \(i = 1,\ldots, n\);
\item {\rm (Monotonicity)} on any component $C_v \subset C$ on which
  \(\lambda | C_v \)
  is non-constant, \(\lambda |C_v \)
  has a single double pole, at the node $w \in C_v$ closest to
  \(z_0\);
\item {\rm (Map stability for infinity scaling)} \(u\) takes values in
  the semistable locus \(X \qu G\) on \(\lambda^{-1}(\infty)\);
\item {\rm (Bundle triviality for zero scaling)} The bundle
  $u^* (X \to X/G)$ is trivializable on \(\lambda^{-1}(0)\),
  or equivalently, \(u\)
  lifts to a map to \(X\) on \(\lambda^{-1}(0)\).
\end{itemize} 
The monotonicity assumption gives an affine structure near the double
pole, thus the term {\em affine}.  An affine gauged map given by a datum
\((u:C \to X/G, z_0,\ldots, z_n,\lambda)\)
is {\em stable} if any component $C_v$ of \(C\)
on which \(u\)
is trivializable has at least three special points, if
\(\lambda | C_v \)
is zero or infinite, or two special points, if \(\lambda | C_v \)
is finite and non-zero.  In the case that \(X \qu G\)
is only locally free, that is, has some finite but non-trivial
stabilizers we also allow orbifold twistings at the nodes of $C$ where
\(\lambda = \infty\),
as in orbifold quantum cohomology.  The {\em homology class} of an
affine gauged map is the class \(u_*[C] \in H_2^G(X,\Q)\).
\end{definition} 

We introduce the following notation for moduli stacks. 
Let \(\ovl{\M}_n^G(\bA,X)\) be the moduli stack of stable affine gauged
maps to \(X\) and \(\ovl{\M}_n^G(\bA,X,d)\) the locus of homology class
\(d\).  Each \(\ovl{\M}_n^G(\bA,X,d)\) is a proper Deligne-Mumford stack
equipped with a perfect obstruction theory.  The relative perfect
obstruction theory on \(\ovl{\M}_n^G(\bA,X) \) has complex dual to \(R p_*
e^* T_{X/G}\), where \(p,e\) are maps from the universal curve
\(\ovl{\cC}_n^G(\bA,X)\) as in the diagram
\begin{equation*}
\begin{tikzcd}[every arrow/.append style={-latex}]
  \ovl{\cC}^G_{n}(\bA,X) \arrow{r}{e} \arrow{d}{p}  & X/G  \\
\ovl{\M}^G_{n}(\bA,X).&
\end{tikzcd}	
\end{equation*}
As in the construction of Y.P. Lee \cite{lee:qk1}, the
perfect obstruction theory determines a virtual structure
sheaf \(\cO^\vir_\M \) in the bounded derived category of coherent
sheaves on \(\ovl{\M}_n^G(\bA,X)\).  It defines a class
\([\cO^\vir_\M ]\) in the rational \(K\)-theory of
\(\ovl{\M}_n^G(\bA,X)\).  Let \(\ovl{I}_{X \qu G}\) denote the
rigidified inertia stack of \(X \qu G\) and 
\[ \ev = (\ev_0,\ev_1,\ldots, \ev_n): \ovl{\M}_n^G(\bA,X)
\to \ovl{I}_{X \qu G} \times (X/G)^n \]
denote the evaluation maps at \(z_0,\ldots, z_n\).
If \(X\)
is smooth projective then the moduli stack \(\ovl{\M}^G_n(\bA,X)\)
is proper.  Properness also holds in certain other situations, such as
if \(X\)
is a vector space, \(G\)
is a torus, and the weights of \(G\)
are contained in an open half-space in $\g^\dual$. For details on the
proof of properness we refer the reader to \cite{reduc}.

The moduli stack of affine gauged maps also admits a forgetful map to
a stack of domain curves.  Denote by \(\ovl{\M}_n(\bA)\)
the moduli stack \(\ovl{\M}_n^G(\bA,X)\)
in the case that \(X\)
and \(G\)
are points, which we call the stack of {\em affine scaled curves}.
There is a forgetful morphism
\[ f: \ovl{\M}_n^G(\bA,X) \to \ovl{\M}_n(\bA) \]
defined by forgetting the morphism to $X/G$ and collapsing all
components that become unstable.

\begin{example} The moduli stack \(\ovl{\M}_2(\bA)\)
  of twice-marked affine scaled curves is isomorphic to the projective
  line via the map
\[\ovl{\M}_2(\bA) \to \P^1, \quad (C,z_0,z_1,z_2,\lambda) \mapsto
\int_{z_1}^{z_2} \lambda ;\]
more precisely, the map above identifies the locus of affine scaled
curves with irreducible domain with \(\C^\times\).  The
compactification adds the two distinguished divisors
\[D_{ \{ 1,2 \}}, D_{ \{ 1 \} , \{ 2 \}} \subset \ovl{\M}_2(\bA) \]
corresponding to loci where the markings \(z_1,z_2 \in C\)
are on the same component $C_v \subseteq C, v \in \Ver(\Gamma)$ with
zero scaling $\lambda | C_v = 0$ resp. different components
$C_{v_1}, C_{v_2} \subset C, v_1\neq v_2 \in \Ver(\Gamma)$.
\end{example}

\subsection{Affine gauged invariants and quantum Kirwan map}

K-theoretic affine gauged Gromov-Witten invariants are defined as
virtual Euler characteristics over the moduli stack of affine gauged
maps.  We introduce an equivariant version of the Novikov ring.
Denote the equivariant polarization \(\LL \to X\).
We denote the equivariant Novikov ring
\begin{equation}\label{eq:novikov}
  \Lambda_X^G =\Set{ f(q) = \sum_{d \in H_2^G(X,\Q)} c_d q^d | 
    \forall E, \# \on{Supp}^E(f) < \infty } \end{equation} 
where 
\[ \on{Supp}^E(f)  = \Set{d \in H_2^G(X) | c_d \neq 0, \lan d,c^G_1(\LL)\ran <
  E }  .\] 
The ring $\Lambda_X^G$ depends on the (class of the) polarisation
\(\LL\),
but we omit \(\LL\)
from the notation when it is clear which polarisation we are
using. Redefine
\begin{eqnarray*}
 QK_G^0(X) &=& \lim_{n \leftarrow} K_G^0(X) \otimes \Lambda^G_X/I_X^G(c)^n,
\\ QK^0(X \qu G) &=&
 \lim_{n \leftarrow}K^0(X \qu G) \otimes \Lambda^G_X/I_X^G(c)^n ;\end{eqnarray*}
in other words, from now on we work over the Novikov ring
\(\Lambda^G_X\).
Let \(\ev_0^d\)
denote the restriction of
\[ \ev_0: \ovl{\M}_n^G(\bA,X) \to \ovl{I}_{X \qu G}\]
to \(\ovl{\M}_n^G(\bA,X,d)\).
Recall the formal map $ \mathcal{MC}_{X \qu G}$ from $QK(X \qu G) $ to
$ QK(X \qu G)$ from \eqref{MCd}.  The map $\mathcal{MC}_{X \qu G}$ is
formally invertible near \(0\).
Its linearization at \(0\)
is the identity modulo higher order terms, involving positive powers
of \(q\),
hence it has a formal inverse $\mathcal{MC}_{X \qu G}^{-1}$ with
$\mathcal{MC}_{X \qu G}^{-1}(0) = 0$.

\begin{definition} The {\em quantum Kirwan map} in quantum K-theory
  with insertions $\beta_n \in K(\ol{\M}_{0,n})$ is the formal map
\begin{multline}
  \kappa_X^G: QK_G^0(X) \to QK^0(X \qu G), \\ \quad \alpha \mapsto
  \mathcal{MC}_{X \qu G}^{-1} \sum_{d \in H_2(X \qu G,\Q),n \ge 0}
  \frac{q^d}{n!} \ev_{0,*}^d (\ev_1^* \alpha \otimes \ldots \otimes
  \ldots \ev_n^* \alpha \otimes f^* \beta_n) .\end{multline}
The {\em linearized quantum Kirwan map} is obtained from the
linearization of \(\kappa_X^G\)
and correction terms arising from the quantum corrections in the inner
product:
\begin{multline} \label{Dkappa} D_\alpha \kappa_X^G: QK_G^0(X) \to QK^0(X
  \qu G), \\ \quad \sigma \mapsto (D_{\kappa_X^G(\alpha)}
  \mathcal{MC}_{X \qu G})^{-1} \sum_{d,n} \frac{q^d}{(n-1)!}
  \ev_{0,*}^d (\ev_1^* \sigma \otimes \ev_2^* \alpha \otimes \ldots
  \ev_n^* \alpha).  \end{multline}
\end{definition} 

\begin{theorem} \label{each}  Each linearization of \(\kappa_X^G\) is a \(\star\)-homomorphism:
\[D_\alpha \kappa_X^G(\sigma \star_\alpha \gamma) = D_\alpha
  \kappa_X^G(\sigma) \star_{\kappa_X^G(\alpha)} D_\alpha
  \kappa_X^G(\gamma) \]
for any \(\alpha \in QK_G^0(X)\).
\end{theorem}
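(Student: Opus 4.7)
The plan is to adapt the cobordism argument from the cohomological quantum Kirwan paper \cite{qkirwan} to the K-theoretic setting, by exploiting divisor equivalences on $\ovl{\M}_2(\bA)$ in place of the cohomological divisor equivalences used there. The starting observation, from the description of $\ovl{\M}_2(\bA) \cong \P^1$ in the Example above, is that the two distinguished divisors $D_{\{1,2\}}$ and $D_{\{1\},\{2\}}$ are point classes in $\P^1$, so
\[ [\cO_{D_{\{1,2\}}}] = [\cO_{D_{\{1\},\{2\}}}] \quad \text{in } K^0(\ovl{\M}_2(\bA)). \]
Pulling this equality back along the forgetful map $f: \ovl{\M}_{n+2}^G(\bA,X,d) \to \ovl{\M}_2(\bA)$ that remembers $z_0,z_1,z_2$ and stabilises, capping against $\ev_1^*\sigma \otimes \ev_2^*\gamma \otimes \ev_3^*\alpha \otimes \cdots \otimes \ev_{n+2}^*\alpha \otimes [\cO^\vir]$, pushing down by $\ev_0^d$, and summing over $n,d$ with Novikov weights $q^d/n!$ will produce an equality of two classes in $QK^0(X \qu G)$. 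After precomposition with $(D_{\kappa_X^G(\alpha)} \mathcal{MC}_{X \qu G})^{-1}$ from \eqref{Dkappa}, these two classes will be identified with the two sides of the claimed homomorphism.

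Next I would identify each pullback via the K-theoretic splitting formula for the virtual structure sheaf along each boundary stratum. On $f^{-1}(D_{\{1,2\}})$ the markings $z_1,z_2$ lie on a bubble of zero scaling, which by the bundle-triviality axiom is a stable map to $X$ itself, joined through a node to an affine-gauged tail carrying the infinity marking $z_0$. The diagonal splitting in $X$ assembles the integral into the composition of the equivariant quantum product $\sigma \star_\alpha \gamma$ inside $QK_G^0(X)$ with the linearised Kirwan map, that is, the left-hand side. On $f^{-1}(D_{\{1\},\{2\}})$ the markings lie on two distinct affine-gauged bubbles attached through a genuine stable map to $X \qu G$ at $z_0$; the diagonal splitting in $X \qu G$ assembles this into the quantum product in $QK^0(X \qu G)$ of the two linearised Kirwan outputs $D_\alpha \kappa_X^G(\sigma)$ and $D_\alpha \kappa_X^G(\gamma)$, that is, the right-hand side.

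The hard part is that, unlike in cohomology, the K-theoretic pairing \eqref{pairing} carries quantum corrections coming from the modified contraction axioms \cite[Section 3.7]{lee:qk1}; concretely, $B_\alpha = \ul{B} \circ D_\alpha \mathcal{MC}_X^G$. Consequently the K-theoretic splitting formulas above naturally produce push-forwards against the \emph{classical} Mukai diagonal rather than against the quantum product, and to recover the right-hand side one must systematically convert these push-forwards into $\star$-products. This is exactly the role of the factor $\mathcal{MC}_{X \qu G}^{-1}$ in the definition of $\kappa_X^G$ and of $(D_{\kappa_X^G(\alpha)} \mathcal{MC}_{X \qu G})^{-1}$ in \eqref{Dkappa}: they absorb the quantum-correction discrepancies on each side of the identity. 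The bookkeeping of these corrections, parallel to Givental's derivation of the WDVV relations in quantum K-theory \cite{giv:wdvv}, is the technical core of the argument; once it is tracked, the pullback equality on $\ovl{\M}_2(\bA)$ yields the claimed homomorphism.
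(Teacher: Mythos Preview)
Your overall strategy matches the paper's: both pull back the equality of the two point classes in $\ovl{\M}_2(\bA)\cong\P^1$ along the forgetful morphism to $\ovl{\M}_{n+2}^G(\bA,X,d)$ and identify the two resulting contributions with the two sides of the homomorphism identity. What you have not pinned down is the precise mechanism by which the factor $(D_{\kappa_X^G(\alpha)}\mathcal{MC}_{X\qu G})^{-1}$ emerges from the geometry, and in K-theory this is the heart of the argument rather than residual bookkeeping.

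The point is that each preimage $f^{-1}(D_{\{1,2\}})$ and $f^{-1}(D_{\{1\},\{2\}})$ in $\ovl{\M}_{n+2}^G(\bA,X,d)$ is not a single irreducible divisor but a \emph{normal-crossing union} of boundary strata, indexed by the number $k\ge 0$ of additional rational components (with infinite scaling, mapping to $X\qu G$) inserted in the chain between the affine-gauged piece(s) and the output marking $z_0$. Your description of the two preimages only captures the $k=0$ stratum in each case. The paper uses the K-theoretic identity expressing the structure sheaf of a normal-crossing union as an alternating sum of structure sheaves of intersections (with the virtual version from Y.P.~Lee \cite[Proposition 11]{lee:qk1}), and then recognises the resulting alternating sum over chain lengths $k$ term-by-term as the geometric series
\[
\sum_{k\ge 0}(-1)^k\bigl(D_{\kappa_X^G(\alpha)}\mathcal{MC}_{X\qu G}-I\bigr)^k
=\bigl(D_{\kappa_X^G(\alpha)}\mathcal{MC}_{X\qu G}\bigr)^{-1},
\]
which is exactly the correction appearing in \eqref{Dkappa}. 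This is the structural reason the \emph{inverse} of the linearised Maurer--Cartan map shows up; without isolating this step one cannot explain why the corrections assemble into an inverse rather than some other combination. Once both sides are written this way, each carries a common invertible factor $D_{\kappa_X^G(\alpha)}\mathcal{MC}_{X\qu G}$, and cancelling it gives the claimed identity.
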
 

\begin{proof} The proof is a consequence of an equivalence of divisor
  classes.  As in the proof of associativity of quantum K-theory by
  Givental \cite{giv:wdvv},  consider the forgetful map
\[ f_2: \ovl{\M}^G_n(\bA,X) \to \ovl{\M}_2(\bA) \cong \P^1 \]
forgetting all but the first and second markings and scaling.  The
inverse image \(f_2^{-1}(\infty)\)
consists of configurations
\[ (u:C \to X/G, \lambda,\ul{z}) \in \Ob(\ovl{\M}_n^G(\bA,X)) \]  
where the first two incoming markings $z_1,z_2 \in C$ are on different
components of the domain $C$ is a union of boundary divisors:
\[ f_2^{-1}(\infty) = \bigcup_\Gamma \ovl{\M}_{\Gamma}^G(\bA,X) \]
where \(\Gamma\)
ranges over combinatorial type of colored tree with \(r\)
colored vertices $v_1,\ldots, v_r \in \Ver(\Gamma)$ and one
non-colored vertex $v_0 \in \Ver(\Gamma)$, with the edge labelled
\(1\)
attached to the first vertex $v_1$ and the edge labelled \(2\)
attached to the second colored vertex $v_2$.  We digress briefly
to recall that if 
\[ D = \cup_{i=1}^n D_i \] 
is a divisor with normal crossing singularities on a variety $Y$ then
the class of the structure sheaf $\cO_D$ is
\[ [\cO_D] = \sum_{I \subset \{ 1, \ldots n \}} (-1)^{|I|} [\cO_{D_I}]
\in K( Y ), \quad D_I = \bigcap_{i \in I} D_i. \]
    The corresponding property for virtual structure sheaves in
    \(\ovl{\M}_{g,n}(X)\)
    is proved \label{grammar3} by Y.P. Lee \cite[Proposition 11]{lee:qk1}.  The intersection of
    any two strata \(\ovl{\M}_{\Gamma_1}^G(\bA,X)\),
    \(\ovl{\M}_{\Gamma_2}^G(\bA,X)\)
    of codimension one is a stratum \(\ovl{\M}_{\Gamma_3}^G(\bA,X)\)
    of lower codimension; there is an exact sequence of sheaves whose
    \(i\)-th
    term is the union of structure sheaves of strata of codimension
    \(i\).
    Thus the structure sheaf of \(f_2^{-1}(\infty)\)
    is identified in \(K\)-theory
    with the alternating sum of structure sheaves
\[ [\mO_{f_2^{-1}(\infty)}] = \sum_{k_1,k_2 \ge 0}
(-1)^{k_1+k_2} \sum_{\Gamma \in \cT_\infty(k_1,k_2)} [
\cO_{\ovl{\M}_{\Gamma}^G(\bA,X,d)} ] \]
where \(\cT_\infty(k_1,k_2)\)
is the set of combinatorial types of affine scaled gauged maps with
\(k_1,k_2\)
rational curves connecting the components containing \(z_1,z_2\),
with finite and non-zero scaling, and the component with infinite
scaling containing \(z_0\).
On the other hand, the structure sheaf of \(f_2^{-1}(0)\)
is the alternating sum of structure sheaves
\[ [ \mO_{f_2^{-1}(0)}] = \sum_{k \ge 0} (-1)^k \sum_{\Gamma \in
\cT_0(k)} [ \cO_{\ovl{\M}_{\Gamma}^G(\bA,X,d)} ] \]
where \(\cT_0(k)\)
is the set of combinatorial types with \(z_1,z_2\)
on one component, \(z_0\)
on another, and these two components related by a chain of \(k\)
rational curves.  The contribution of $\cT_\infty(k)$ to the
push-forward over $f_2^{-1}(0)$ is
\[ (D_\alpha \mathcal{MC}_{X \qu G} - I)^k \sum_{d \in H_2^G(X,\Q),n
  \ge 1} \frac{q^d}{(n-1)!} \ev_{0,*}^d (\ev_1^* \sigma \otimes \ev_2^*
\alpha \otimes \ldots \ev_n^* \alpha).  \]
The inverse of the linearization of the map
\(\mathcal{MC}_{X \qu G}\),
\begin{equation} \label{contributes} 
 D_\alpha \mathcal{MC}_{X \qu G}^{-1} = (I + (D_\alpha \mathcal{MC}_{X \qu G} - I))^{-1} = \sum_{k \ge 0}
 (-1)^k (D_\alpha \mathcal{MC}_{X \qu G} - I)^k .\end{equation} 
Putting everything together and using \eqref{Dkappa} gives
\begin{multline} \nonumber
(D_{\kappa_X^G(\alpha)} \mathcal{MC}_{X \qu G})  (D_\alpha \kappa_X^G)  (\sigma \star_\alpha \gamma) =
\\ ( D_{\kappa_X^G(\alpha)} \mathcal{MC}_{X \qu G})   (( ( D_\alpha \kappa_X^G) \sigma) \star_{\kappa_X^G(\alpha)} ((D_\alpha \kappa_X^G)
  \gamma )) .\end{multline}
Since \(D_{\kappa_X^G(\alpha)} \mathcal{MC}_{X \qu G}\) is invertible, this implies the result.
\end{proof} 

\begin{remark}  {\rm (Inductive definition)} 
For classes 
\[ \aleph_0 \in QK^0(X \qu G), \quad \aleph_1,\ldots, \aleph_n \in
QK_G^0(X) \] 
denote by 
\begin{multline} \label{IGA2}
  m_{n,d}(\aleph_0,\aleph_1,\ldots,\aleph_n) \\ := \chi(
  \ovl{\M}_n^G(\bA,X,d), \ev_0^* \aleph_0 \otimes \ev_1^* \aleph_1
  \otimes \ldots \ev_n^* \aleph_n \otimes [\cO^{\vir}] ) \in
  \Z \end{multline}
the virtual Euler characteristic.  For classes
\[  \aleph_0 \in QK^0(X \qu G), \quad \alpha, \aleph_1,\ldots,
\aleph_k \in QK_G^0(X) \]
define
\[ m_{k,d}^\alpha(\aleph_0,\ldots,\aleph_k) = \sum_{n \ge 0}
  \frac{1}{n!}  m_{k+n,d}(\aleph_0,\ldots, \aleph_k,\alpha,\ldots,
  \alpha) \]
  and similarly define $ \varphi_{n,d}^{\kappa_X^G(\alpha)}$ by summing
  over all possible numbers of insertions of $\kappa_X^G(\alpha)$.
Expanding the definition of the inner product we have (in topological $K$-theory)
\begin{multline}  \label{expanding}
 \ul{B}(D_\alpha \kappa_X^G(\sigma) ,\gamma) =
 \sum_{d_0,\ldots,d_r,\aleph_1,\ldots,\aleph_r} (-1)^r q^d
 m_{2,d_0}^{\alpha}(\sigma,\aleph_1^\dual) \\ \left( \prod_{i=1}^{r-1}
 \varphi_{2,d_i}^{\kappa_X^G(\alpha)}(\aleph_i, \aleph_{i+1}^\dual) \right)
 \varphi_{2,d_r}^{\kappa_X^G(\alpha)}(\aleph_r,\gamma) \end{multline}
where the sum is over all sequences of non-negative classes
\((d_0,\ldots,d_r)\) such that \(\sum d_i = d\) and \(d_i > 0 \) for \(i > 0\)
and \(\aleph_1,\ldots,\aleph_r\) range over a basis for \(QK(X \qu G)\).
Equivalently, \(D_\alpha \kappa_X^G(\sigma)\) can be defined by the inductive
formula
\begin{multline} \label{inductive} 
 \ul{B}(D_\alpha \kappa_X^G(\sigma),\gamma) = \sum_{d > 0 } q^d
 m_{2,d}^{\alpha}(\sigma,\gamma) \\ -\sum_{\aleph, e > 0} q^e
 \ul{B}(D_\alpha \kappa_X^G(\sigma),\aleph)
 \varphi_{2,e}^{\kappa_X^G(\alpha)}(\aleph^\dual,\gamma) .\end{multline}
\end{remark} 

\begin{definition} 
  The {\em canonical bulk deformation} of \(QK(X \qu G)\)
  is the value $ \kappa_X^G(0) \in QK(X \qu G)$ of $\kappa_X^G$ at
  $0$.  
\end{definition}

Note that $\kappa_X^G$ depends on the choice of presentation of
\(X \qu G\)
as a git quotient.  We give the following criterion for the canonical
bulk deformation to vanish.

\begin{lemma} \label{vanish} Suppose that the coarse moduli spaces
  of \( \ovl{\M}^G_0(\bA,X,d)\)
  and \( \ol{\M}_{0,n}(X \qu G, d) \)
  are smooth, rational and connected with virtual fundamental sheaf
  equal to the usual structure sheaf.  Suppose further that
  $H_2^G(X) \cong H_2(X \qu G)$ with \( \ovl{\M}^G_0(\bA,X,d)\)
  non-empty iff \( \ol{\M}_{0,n}(X \qu G, d) \)
  is non-empty.  Then $\kappa_X^G(0) = 0$.
\end{lemma}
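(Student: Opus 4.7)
The plan is to show the argument of $\mathcal{MC}_{X \qu G}^{-1}$ in the definition of $\kappa_X^G$ already vanishes at $\alpha = 0$. Indeed, by inspection of formula \eqref{T} every summand of $\mathcal{MC}_{X \qu G}$ carries at least one factor $\ev_i^* \alpha$, so $\mathcal{MC}_{X \qu G}(0) = 0$ and hence $\mathcal{MC}_{X \qu G}^{-1}(0) = 0$. Thus once we verify that
\[
\Phi := \sum_{d, \, n \ge 0} \frac{q^d}{n!} \ev_{0,*}^d\bigl(\ev_1^* 0 \otimes \cdots \otimes \ev_n^* 0 \otimes f^* \beta_n\bigr) = 0,
\]
we conclude $\kappa_X^G(0) = \mathcal{MC}_{X \qu G}^{-1}(\Phi) = \mathcal{MC}_{X \qu G}^{-1}(0) = 0$. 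Every $n \ge 1$ summand of $\Phi$ is killed by its $\ev_i^* 0$ factors, so the task reduces to showing $\sum_{d} q^d \ev_{0,*}^d (f^* \beta_0) = 0$.

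First I would dispose of $d = 0$: the moduli stack $\ovl{\M}_0^G(\bA, X, 0)$ is empty by the stability condition in Definition \ref{affgauge}. A putative configuration with constant (and hence trivializable) map $u$ on an irreducible rational domain, a single marking $z_0$ with $\lambda(z_0) = \infty$, and scaling $\lambda$ cannot supply the required two or three special points on the sole component. For $d > 0$ I would use the hypotheses of the lemma: the equality $[\cO^\vir] = [\cO]$ on both $\ovl{\M}_0^G(\bA, X, d)$ and the stable-map moduli $\ovl{\M}_{0,n}(X \qu G, d)$, combined with smoothness, rationality, and connectedness, reduces the K-theoretic pushforward $\ev_{0,*}^d(f^* \beta_0)$ to an ordinary derived pushforward controlled by a Koll\'ar-type theorem on rationally connected fibers. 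The identification $H_2^G(X) \cong H_2(X \qu G)$ and the matching non-emptiness pattern then enable a degree-by-degree comparison, via a gluing/degeneration analysis at the infinite-scaling boundary of $\ovl{\M}_0^G(\bA, X, d)$, between the affine gauged pushforwards and the stable-map pushforwards entering $\mathcal{MC}_{X \qu G}$. The outcome is an identification of $\sum_{d > 0} q^d \ev_{0,*}^d(f^* \beta_0)$ with $\mathcal{MC}_{X \qu G}$ evaluated at an expression that itself vanishes at $\alpha = 0$, hence with zero.

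The principal obstacle will be this $d > 0$ comparison: making the gluing identity between the infinite-scaling strata of $\ovl{\M}_0^G(\bA, X, d)$ and the stable-map moduli precise, applying rationality and connectedness correctly in the K-theoretic (possibly equivariant and stacky) setting, and accounting for twisted-sector contributions when $X \qu G$ is a genuine Deligne-Mumford stack rather than a scheme.
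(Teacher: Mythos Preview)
Your argument has a genuine gap. You aim to show $\Phi := \sum_{d} q^d \ev_{0,*}^d(f^* \beta_0) = 0$, but in fact $\Phi$ is not zero: the Buch--Mihalcea-type pushforward theorem you invoke gives $\ev_{0,*}^d[\mO] = [\mO_{X \qu G}]$ for each $d$ with nonempty moduli, so $\Phi = \bigl(\sum_d q^d\bigr)[\mO_{X \qu G}]$, a manifestly nonzero class. The trouble originates in your first step, the assertion that $\mathcal{MC}_{X \qu G}(0) = 0$: while a literal reading of \eqref{T} with $n \ge 1$ supports this, the paper's proof computes $\mathcal{MC}_{X \qu G}(0)$ to be nonzero --- the summation in \eqref{T} should be read as starting at $n \ge 0$, so that the $n = 0$ terms (pushforwards over two-pointed stable-map spaces with one trivial insertion) contribute. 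Your final identification of $\Phi$ with $\mathcal{MC}_{X \qu G}$ evaluated at something vanishing is close to the right idea; the correct statement is simply $\Phi = \mathcal{MC}_{X \qu G}(0)$, with both sides equal and nonzero.

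The paper's proof runs as follows: apply the Buch--Mihalcea pushforward result on the affine-gauged side to get $\Phi = \bigl(\sum_d q^d\bigr)[\mO_{X \qu G}]$, and apply it again on the stable-map side to get $\mathcal{MC}_{X \qu G}(0) = \bigl(\sum_d q^d\bigr)[\mO_{X \qu G}]$. The hypothesis that $\ovl{\M}_0^G(\bA,X,d) \neq \emptyset \iff \ovl{\M}_{0,n}(X \qu G, d) \neq \emptyset$ under the identification $H_2^G(X) \cong H_2(X \qu G)$ is precisely what forces the two sums $\sum_d q^d$ to range over the same index set and hence coincide. Thus $\kappa_X^G(0) = \mathcal{MC}_{X \qu G}^{-1}(\Phi) = \mathcal{MC}_{X \qu G}^{-1}\bigl(\mathcal{MC}_{X \qu G}(0)\bigr) = 0$. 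No gluing or degeneration at the infinite-scaling boundary is needed; the entire argument is two applications of the pushforward theorem and a matching of index sets.
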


\begin{proof} Under the conditions in the Lemma, a result of
  Buch-Mihalcea \cite[Theorem 3.1]{buch:qk} implies that the
  push-forward of the structure sheaf over any moduli space of maps in
  the Lemma under any evaluation map is the structure sheaf of the
  target:
 \begin{multline} 
 \ev_{0,*}^d [\mO_{\ovl{\M}_n^G(\bA,X,d)}] 
= [\mO_{X \qu G}], \quad 
    \ev_{1,*}^d [\mO_{\ovl{\M}_{0,n}(X \qu G,d)}] = [\mO_{X\qu G}], \\ \quad
      \forall d \in H_2^G(X) \cong H_2(X \qu G), n \ge 0. \end{multline}
Thus 
\begin{eqnarray*} 
 \sum_{d} q^d \ev_{0,*}^d [\mO_{\ovl{\M}_n^G(\bA,X,d)}] 
&=& \left(\sum_{d} q^d \right) [\mO_{X \qu G}], \\
\sum_{d,n} q^d \ev_{1,*}^d  [\mO_{\ovl{\M}_{0,1}(X,d)}] 
 &=& \left(\sum_{d} q^d \right) [\mO_{X \qu G}] \end{eqnarray*}
where both sums are over $d$ such that
$\ol{\M}^G_n(\bA,X,d), \ol{\M}_{0,n}(X \qu G,d)$ are non-empty.  It
follows that
\[  \mathcal{MC}_{X \qu G}(0) = \left(\sum_{d} q^d \right) [\mO_{X \qu G}], 
\quad \kappa_X^G(0) = 0  \] 
as claimed.
\end{proof} 

Under the conditions of the lemma, one obtains a map of {\em small}
quantum K-rings given by the linearized quantum Kirwan map
\[ D_0 \kappa_X^G : QK_G^0(X) \to QK^0(X \qu G) .\]

\section{Quantum K-theory of toric quotients} 
\label{toric}

In this section we use the K-theoretic quantum Kirwan map to give a
presentation of the quantum K-theory of any smooth proper toric
Deligne-Mumford stack with projective coarse moduli space.  The case
of projective spaces was treated in Buch-Mihalcea \cite{buch:qk} and
Iritani-Milanov-Tonita \cite{iri:qkt}, and general toric varieties
were treated in Givental \cite{giv:toric}.  The toric stacks we
consider are obtained as git quotients for actions of tori on vector
spaces.  Let \(G\)
be a torus with Lie algebra $\g$.  Let \(X\)
be a vector space with a representation of \(G\)
such that the weights 
\[ \mu_i \in \g^\dual_\R, i = 1,\ldots, k \] 
of the action are contained in the interior of a half-space in
$\g^\dual_\R$.  For generic polarization, the git quotient \(X \qu G\)
is smooth.  We assume that $X \qu G$ is non-empty, and for simplicity
that the generic stabilizer is trivial.  We have
\[ QK_G^0(X) \cong QK_G^0(\pt) = R(G) \] 
where $R(G)$ denotes the Grothendieck group of finite-dimensional
representations of \(G\).
Denote by \(X_k \subset X\)
the representation given by the \(k\)-th
weight space.  For any class \(d \in H_2^G(X) \cong \g_\Z\),
define elements of $QK^0_G(X)$ by
\[ \zeta_+(d) = \prod_{\mu_j(d) \ge 0} (1 - X_j^{-1})^{\mu_j(d)}  ,
\quad  \zeta_-(d) = q^d \prod_{\mu_j(d) \leq 0} (1 -
X_j^{-1})^{-\mu_j(d)} .\]
Define the $d$-th {\em K-theoretic Batyrev element} 
\begin{equation} \label{zetad}
 \zeta(d) = \zeta_+(d) - \zeta_-(d) \in QK_G^0(\pt) \cong QK_G^0(X)
 .\end{equation} 

\begin{proposition} \label{kernelcontains} The kernel of
  \(D_0 \kappa_X^G\)
  contains the elements \(\zeta(d), d \in H_2^G(X)\).
\end{proposition}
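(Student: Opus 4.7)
The plan is to unfold the definition of $D_0 \kappa_X^G$ and reduce the vanishing of $D_0 \kappa_X^G(\zeta(d))$ to a term-by-term equality of pushforwards over the moduli stacks of affine gauged maps. Since $\alpha = 0$, only the $n = 1$ term in \eqref{Dkappa} survives, giving
\[ D_0 \kappa_X^G(\sigma) \;=\; (D_0 \mathcal{MC}_{X \qu G})^{-1} \sum_{e \in H_2^G(X,\Q)} q^e \, \ev_{0,*}^e (\ev_1^* \sigma), \]
and since $D_0 \mathcal{MC}_{X \qu G}$ is invertible, it is enough to check the raw identity
\[
\sum_{e} q^e \, \ev_{0,*}^e (\ev_1^* \zeta_+(d))
\;=\;
q^d \sum_{e} q^e \, \ev_{0,*}^e \bigl( \ev_1^* \textstyle\prod_{\mu_j(d) \leq 0} (1 - X_j^{-1})^{-\mu_j(d)} \bigr)
\]
in $QK^0(X \qu G)$, which by matching coefficients of $q^e$ is the claim that the pushforward of $\ev_1^*\zeta_+(d)$ from class $e$ equals the pushforward of the complementary product from class $e - d$.

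First, I would exploit the explicit description of $\ovl{\M}_1^G(\bA, X, e)$ available in the toric setting: a stable affine gauged map of class $e$ is a $G$-bundle on a scaled chain together with, for each weight $\mu_j$, a section of an associated line bundle of degree $\mu_j(e)$ satisfying semistability at $z_0$. On the main stratum (irreducible domain, finite nonzero scaling) the complex $Rp_* e^* L_{\mu_j}$ on $\ovl{\M}_1^G(\bA, X, e)$ has virtual rank $\mu_j(e) + 1$ and is concentrated in $R^0 p_*$ when $\mu_j(e) \geq 0$ and in $R^1 p_*$ when $\mu_j(e) < 0$.

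Second, I would interpret $1 - X_j^{-1} \in R(G) = QK_G^0(\pt) \cong QK_G^0(X)$ as the K-theoretic Euler class $\lambda_{-1}(X_j^\vee)$, so that after multiplying by $[\cO^{\vir}]$ and pushing forward via $\ev_0$, $\ev_1^* \zeta_+(d)$ computes (as in the twisting formalism recalled in \eqref{twisted}) the $K$-theoretic Euler class of a rank-$\sum_{\mu_j(d) \geq 0} \mu_j(d)$ subbundle of $\bigoplus_j Rp_* e^* L_{\mu_j}$. Third, I would apply Serre duality in families on the universal curve $p: \ovl{\cC}_1^G(\bA, X, e) \to \ovl{\M}_1^G(\bA, X, e)$: interchanging $R^0 p_*$ with $R^1 p_*$ converts the positive part of the Euler class $\prod_{\mu_j(d) \geq 0} (1 - X_j^{-1})^{\mu_j(d)}$ into $\prod_{\mu_j(d) \leq 0} (1 - X_j^{-1})^{-\mu_j(d)}$, and the determinant of the relative dualizing sheaf together with the degree shift in the moduli class produces exactly the Novikov factor $q^d$ and the shift from $e$ to $e - d$ needed on the right-hand side.

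The main obstacle will be controlling contributions from the boundary strata of $\ovl{\M}_1^G(\bA, X, e)$, where the domain curve bubbles and the scaling degenerates to $0$ or $\infty$. On the $\lambda = \infty$ side, maps already take values in $X \qu G$, and their contribution is precisely what is absorbed into the $(D_0 \mathcal{MC}_{X \qu G})^{-1}$ prefactor in \eqref{Dkappa}; one must verify order by order in the Novikov filtration defining $\widehat{QK_G^0(X)}$ that the Serre-duality matching is compatible with these boundary corrections. A secondary difficulty is the orbifold case, where $\mu_j(e)$ may be rational and the index bundle carries an age-type correction indexed by elements of the inertia stack; there stacky Serre duality on the twisted universal curve must be invoked, and the formal completion in Definition \ref{formal} plays a role in making sense of the resulting power-series expansions.
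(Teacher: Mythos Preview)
Your reduction to the raw pushforward identity
\[
\ev_{0,*}^{d'}\bigl(\ev_1^*\zeta_+(d)\bigr) \;=\; \ev_{0,*}^{d'-d}\bigl(\ev_1^*\textstyle\prod_{\mu_j(d)\le 0}(1-X_j^{-1})^{-\mu_j(d)}\bigr)
\]
is exactly right, and so is the identification of $1-X_j^{-1}$ as a K-theoretic Euler class. The gap is in your third step. Serre duality on the universal curve over $\ovl{\M}_1^G(\bA,X,e)$ is a statement on that single moduli space: it relates $R^0p_*$ of one line bundle to the dual of $R^1p_*$ of another line bundle, both living over the \emph{same} base. It does not by itself produce a comparison between pushforwards over $\ovl{\M}_1^G(\bA,X,e)$ and $\ovl{\M}_1^G(\bA,X,e-d)$; your claimed ``degree shift in the moduli class'' does not come out of relative Serre duality, and no amount of dualizing-sheaf bookkeeping on one moduli space will move you to a different one.

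The paper's proof supplies precisely the missing geometric bridge between the two moduli spaces. It builds an explicit section $\sigma$ of $E_+=\bigoplus_{\mu_j(d)\ge 0}(\ev_1^*X_j)^{\oplus\mu_j(d)}$ over $\ovl{\M}_1^G(\bA,X,d')$ by recording the first $\mu_j(d)$ derivatives of the $j$-th component of $u$ at $z_1$ (using the affine coordinate furnished by the scaling $\lambda$). The zero locus $\sigma^{-1}(0)$ is then the locus of maps whose $j$-th component vanishes to order $\mu_j(d)$ at $z_1$, and dividing by $(z-z_1)^{d}$ gives a morphism $\delta:\sigma^{-1}(0)\to\ovl{\M}_1^G(\bA,X,d'-d)$. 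A direct comparison of $Rp_*e^*T_{X/G}$ along $\iota$ and $\delta$ shows the difference in obstruction theories is exactly $[E_+]-[E_-]$, which matches the two Euler-class insertions and yields the identity. (The paper also notes a second proof via the difference operators in Example~\ref{tdiff}.) What you are missing is this division-by-$(z-z_1)^d$ correspondence; once you have it, the boundary-stratum concerns you raise largely dissolve, since the correspondence is defined on the full compactified moduli space and is compatible with the virtual structure sheaves.
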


\begin{proof} An argument using divided difference operators is given
  later in Example \ref{tdiff}; here we give a geometric proof.  The
  target \(X\)
  itself defines an element of \(K_G^0(X)\)
  via pull-back under $X \to \pt$.  The pull-back \([\ev_j^* X]\)
  is a class in \(K(\ovl{\M}_1^G(\bA,X) )\)
  for $j = 1,\ldots, n$.  Define sections
\begin{equation} \label{derivs}
 \sigma_{i,j}: \ovl{\M}_n^G(\bA,X) \to \ev_j^* X,  \quad i \ge 0 \end{equation} 
by composing the map $\ovl{\M}_1^G(\bA,X) \to \ev_1^* X$ taking the
\(i\)-th
derivative of the map \(u: C \to X/G\)
at the marking \(z_j\)
with the forgetful morphism
$\ovl{\M}_n^G(\bA,X) \to \ovl{\M}_1^G(\bA,X)$.  More precisely,
suppose that \(u: C \to X/G\)
is given by a bundle \(P \to C\)
and a section \(v: C \to P \times_G X\).
In a local trivialization near \(z_j\)
the section is given by a map \(v: \C \to X\).
Furthermore, the scaling \(\lambda\)
on \(C\)
necessarily pulls back to a non-zero scaling on \(\C\),
since there are no components of \(C\)
with zero scaling.  (There are no holomorphic curves in \(X\),
hence all curves with zero scaling are constant, and there is only one
marking, hence no constant components with three special points and
zero scaling.)  Choose a coordinate \(z\)
so that \(\lambda = \d z\).
Let \(\sigma_{i,j}([u]) \in \ev_1^* X_j \)
denote the \(i\)-th
derivative of \(v\) at \(z_j\) with respect to the coordinate \(z\).

We apply these canonical sections to the following Euler class
computation.  Each factor \(X_j\)
defines a corresponding class \([X_j]\)
in \(K_G^0(X)\);
we often omit the square brackets to simplify notation.  Define
bundles \(E_\pm\to \ol{\M}_1^G(\bA,X)\)
\[E_\pm := \bigoplus_{\pm \mu_j(d) \ge 0} \ev_1^* X_j^{\oplus
  \mu_j(d)} .\]
The Euler class of \(E_\pm\) is
\[ \Eul(E)_\pm = \bigotimes_{ \pm \mu_j(d) \ge 0} (1 - \ev_1^*
X_j)^{\otimes \mu_j(d)}  \in K(\ol{\M}_1^G(\bA,X)) .\]
Given a section \(\sigma_j\)
of \(\ev_1^* X_j\)
transverse to the zero section, there is a canonical isomorphism of
the structure sheaf $\mO_{\sigma_j^{-1}(0)}$ of \(\sigma_j^{-1}(0)\)
with 
\[  [\Eul( \ev_1^* X_j^\dual)] = [1 - \ev_1^* X_j^\dual] \in  K(
\ovl{\M}_1^G(\bA,X,d')) .\]   
This isomorphism is defined by the exact sequence
\[ 0 \to \ev_1^* X_j^\dual \to \cO \to \cO_{\sigma_j^{-1}(0)} \to 0
.\]
Extending this by direct sums, any section \( \sigma:
\ovl{\M}_1^G(\bA,X,d') \to E_\pm\) transverse to the zero section
defines an equality
\[[\cO_{\sigma^{-1}(0)}] = [\Eul(E)_\pm^\dual] \in K(
\ovl{\M}_1^G(\bA,X),d') .\]
In particular, let \(\sigma\)
denote the section of \(E_+\)
given by the derivatives
\[ \sigma_{i,j}, i = 1,\dots, d_j := \min(\mu_j(d),\mu_j(d')) \] 
defined in \eqref{derivs}.  We construct a diagram
\begin{equation*}
\begin{tikzcd}[every arrow/.append style={-latex}]
  \sigma^{-1}(0) \arrow{r}{\iota} \arrow{d}{\delta}  & \ovl{\M}_1^G(\bA,X,d')  \\
.\ovl{\M}_1^G(\bA,X,d'-d) &
\end{tikzcd}	
\end{equation*}
as follows.  The map \(\iota\)
is the inclusion.  To construct \(\delta\),
note that \(\sigma^{-1}(0) \subset \ol{\M}_1^G(\bA,X)\)
consists of maps $u$ whose \(j\)-th
component $u_j$ vanishes to order \( d_j\)
at the marking \(z_1\).
Therefore, for any \([u] \in \sigma^{-1}(0)\)
define a map of degree \( d' - d\)
by dividing by the \(j\)-th
component of \(u:C \to X/G\)
on the component of \(C\)
containing \(z_1\)
by \((z-z_1)^{d_j}\)
on the component containing \(z_1\),
to obtain a map denoted \((z - z_1)^{-d} u \).
The other components of \(C\)
all map to \(X \qu G\),
and the action of \((z - z_1)^{-d}\)
on the other components does not change the isomorphism class of
\(u\).  It follows that there is a canonical map
\begin{equation} \label{Dmap} \delta: \sigma^{-1}(0) \to \ovl{\M}_1^G(\bA,X,d'-d), \quad [u] \mapsto
[u / (z - z_1)^d] .\end{equation} 
The normal bundle to $\delta$ has Euler class the product of factors \label{factors}
$(1 - X_j^{-1})^{\min(\mu_j(-d), \mu_j(d' - d))}$ over $j$ such that
$\mu_j(-d) \ge 0$.

The remaining factors are explained by the difference in obstruction
theories.  We denote by \(p^{d'}\)
the restriction of the projection \(p\)
to maps of homology class \(d'\).
To compute the difference in classes we note that \(\delta\)
lifts to an inclusion of universal curves and (if
\(e^{d'}, e^{d' - d}\) denote the universal evaluation maps)
\begin{eqnarray*} 
 \iota^* [ R p^{d'}_{*} e^* T_{X/G}] - \delta^* [Rp^{d' - d}_* e^* T_{X/G}]
 &=& \iota^* [  R p^{d'}_*
     \bigoplus_j (\mO_{z_1} (\mu_j(d')) ] \\ && 
- \delta^* [Rp^{d' - d} \bigoplus_j \mO_{z_1}(\mu_j(d' - d)) ] \\ &=&
                                                                      \iota^*[E_+]- \rho^*[ E_-]
   .\end{eqnarray*}
Hence for any class \(\alpha_0 \in K(X \qu G)\) we obtain
\begin{multline} 
 \chi^{\vir}( \ovl{\M}_1^G(\bA,X,d') , \ev_0^* \alpha_0 \otimes \ev^*
\zeta_+(d)) \\ = \chi^{\vir}( \ovl{\M}_1^G(\bA,X,d' - d) , \ev_0^*
\alpha_0 \otimes \ev^* \zeta_-(d)) .\end{multline} 
That is, 
\[m_{1,d'}(\alpha_0,\zeta_+(d)) = m_{1,d'- d}(\alpha_0,\zeta_-(d)) .\]
By definition of the quantum Kirwan map this implies 
\[ D_0 \kappa_X^G (\zeta_+(d)) = q^d D_0 \kappa_X^G (\zeta_-(d)) .\qedhere\]
\end{proof}  

We wish to show that the elements in the lemma above generate, in a
suitable sense, the kernel of the K-theoretic quantum Kirwan map.
Define the {\em quantum \(K\)-theoretic
  Stanley-Reisner ideal} \(QKSR_X^G\)
to be the ideal in \(QK_G^0(X)\)
spanned by the K-theoretic Batyrev elements \(\zeta(d), d \in H_2(X,\Z)\)
of \eqref{zetad}.  In general there are additional elements in the
kernel of the linearized quantum Kirwan map.  In order to remove these
one must pass to a formal completion.  

\begin{definition} \label{formal}
\begin{enumerate}
\item In the case that $G$ acts freely on the semistable locus in $X$,
  for \(l = (l_1,\ldots, l_k) \in \Z_{\ge 0}^k \) define a filtration
\[ QK_G^0(X)^{\ge l} := \prod_{j=1}^k (1 - X_j^{-1})^{l_j} 
QK_G^0(X)
\subset QK_G^0(X) .\]
\item More generally suppose that $G$ acts on the semistable locus in
  $X$ with finite stabilizers $G_x, x \in X^{\ss}$ and let 
\[ \cF(X)  = \bigcup_{j=1}^k \cF_j(X) \subset \C \] 
denote the set of roots of unity 
  \[ \cF_j(X) := \{ g^{\mu_j} = \exp(2 \pi i \mu_j(\xi)) \in \C^\times
  \ | \ g = \exp(\xi) \in G_x, x \in X^{\ss} \} \]
  representing the roots of unity for the action of
  $g \in G_x, x \in X^{\ss}$ on $X_j$.   Define 
  \[ QK_G^0(X)^{\ge l} := \prod_{j=1}^k \prod_{\zeta \in \cF_j(X)} (1
  - \zeta X_j^{-1})^{l_j} . \]
\end{enumerate}
Let \(\widehat{QK}_G(X)\)
denote the completion with respect to this filtration
\[ \widehat{QK}_G(X) = \lim_{\leftarrow l} QK_G^0(X)/QK_G^0(X)^{\ge l} .\]
\end{definition}

\begin{lemma} \label{formal2} For any \(d,n\)
  the K-theoretic Gromov-Witten invariants in the definition of
  $\kappa_X^G$ vanish for
  \(\alpha_0,\ldots, \alpha_n \in QK_G^0(X)^{\ge l}\)
  for \(l_1 + \ldots + l_k \) sufficiently large. \end{lemma}

\begin{proof} 
  We apply Tonita's virtual Riemann-Roch theorem \cite{to:rr}: If
  $\M = \ol{\M}_n^G(\bA,X,d)$ embeds in a smooth global quotient
  stack and $I_\M$ its inertia stack then for any vector
  bundle $V$ \label{norigid} 
  \[ \chi^\vir(\M, V) = \int_{ [I_\M]^{\vir} } m^{-1} \Ch(V \otimes
  \cO_{I_\M}^{\vir}) \Td (T I_\M) \Ch( \Eul( \nu_\M^\dual)) \]
  where $\nu_\M$ is the normal bundle of $I_\M \to \M$ and
  $m: I_\M \to \Z$ is the order of stabilizer function.  The pullback
  $ \ev_j^* ( 1- \zeta X_j^{-1} ) $ restricts on a component of the
  inertia stack corresponding to an element $h \in G$ to the K-theory
  class $1 - \chi_j(h)^{-1} X_j^{-1}$, where $\chi_j$ is the character
  of $X_j$.  The Chern character of each
  $1 - \chi_j(h)^{-1} \zeta X_j^{-1}$ has degree at least two for
  $\zeta = \chi_j(h)$.  In this case if the sum $l_j, j\in I(h)$ of
  weights for which $\chi_j(h)$ is trivial is larger than the virtual
  dimension for the component of $I\M$ corresponding to $h$, the
  virtual integral over $I\M$ (which embeds in a smooth global
  quotient stack, see \cite[Part 3, Proposition 9.14]{qkirwan} and
  \cite[Proposition 3.1 part (d)]{wall}) \label{correctref} vanishes.
\end{proof} 

Lemma \ref{formal2} implies that the quantum Kirwan map admits a
natural extension to the formal completion:
\[ \widehat{\kappa_X^G}: \widehat{QK}_G(X) \to QK(X \qu G) .\]

\begin{theorem} (Theorem \ref{tmain} from the Introduction.)  The
  completed quantum Stanley-Reisner ideal is the kernel of the
  linearized quantum Kirwan map $D_0 \kappa_X^G$: We have an exact
  sequence
\[ 0 \to \widehat{QKSR_X^G} \to \widehat{QK_G^0(X)} \to QK(X \qu G) \to
0 .\]
\end{theorem}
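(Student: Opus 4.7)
The inclusion $\widehat{QKSR_X^G} \subseteq \ker D_0 \kappa_X^G$ is the content of Proposition \ref{kernelcontains}, so $D_0 \kappa_X^G$ factors through the quotient to give an induced map
\[
\phi \colon \widehat{QK_G^0(X)}/\widehat{QKSR_X^G} \longrightarrow QK(X \qu G).
\]
My plan is to prove $\phi$ is an isomorphism by a Nakayama-type argument over the Novikov ring $\Lambda_X^G$, using the augmentation ideal $\mathfrak{m}$ generated by $q^d$ with $d \neq 0$ effective. Both source and target are $\mathfrak{m}$-adically complete by the construction in \eqref{completion}.

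First I would verify the classical limit $q = 0$. At $q = 0$ the Maurer--Cartan map $\mathcal{MC}_{X \qu G}$ reduces to the identity, all contributions from $d > 0$ drop out, and $D_0 \kappa_X^G$ reduces to the ordinary Kirwan map $K_G^0(X) \to K^0(X \qu G)$. The Batyrev elements reduce to $\zeta_+(d)$ for $d$ effective, which are exactly the classical K-theoretic Stanley--Reisner relations. Together with the character-lattice relations and the formal completion of Definition \ref{formal} by the roots of unity $\cF_j(X)$ (parametrizing the components of the rigidified inertia stack $\ovl{I}_{X \qu G}$), this recovers the classical presentation of $K^0(X \qu G)$, generalizing Vezzossi--Vistoli \cite[Section 6.2]{vv} to the orbifold setting. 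Thus the reduction $\phi \bmod \mathfrak{m}$ is an isomorphism onto $K^0(X \qu G)$.

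Second I would bootstrap from the classical limit to the quantum statement. Surjectivity of $\phi$ is inherited from the surjectivity of $\kappa_X^G$ in the main theorem. For injectivity, I would exploit that both sides are flat over $\Lambda_X^G$: the target by construction, and the source because each relation $\zeta(d) = \zeta_+(d) - \zeta_-(d)$ is a $q$-deformation of a classical relation with unchanged leading term, so the deformed and undeformed quotients have the same rank. A standard Nakayama argument applied to the filtration by powers of $\mathfrak{m}$ then yields $\ker \phi = \mathfrak{m} \ker \phi$, and $\mathfrak{m}$-adic completeness forces $\ker \phi = 0$.

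The main obstacle is verifying that the formal completion of Definition \ref{formal} is the right one in the orbifold case: an element $g = \exp(\xi)$ of a stabilizer $G_x \subset G$ acts on the weight space $X_j$ by the root of unity $\exp(2\pi i \mu_j(\xi))$, and the twisted sectors of $\ovl{I}_{X\qu G}$ should correspond bijectively to compatible systems of such roots $\zeta \in \cF_j(X)$. Checking that the Euler-class derivation of Proposition \ref{kernelcontains} and the rank count both extend cleanly to each orbifold sector---so that the vanishing in Lemma \ref{formal} produces exactly the correct rank, with no spurious classes surviving in the completion---is the technical heart of the argument.
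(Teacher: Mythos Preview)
Your outline correctly identifies the two ingredients: the containment $\widehat{QKSR_X^G}\subseteq\ker D_0\kappa_X^G$ from Proposition~\ref{kernelcontains}, and a dimension/rank comparison to finish.  But the step you label ``flatness of the source'' is the entire difficulty, and your justification for it is not a proof.  The assertion that ``each relation $\zeta(d)=\zeta_+(d)-\zeta_-(d)$ is a $q$-deformation of a classical relation with unchanged leading term, so the deformed and undeformed quotients have the same rank'' would be valid if the leading terms $\zeta_+(d)$ formed a regular sequence, or a Gr\"obner basis for the classical Stanley--Reisner ideal, but neither is established (and neither is true in general: the $\zeta(d)$ are infinitely many, highly redundant, and their classical limits $\zeta_+(d)$ include many non-minimal monomials).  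Without this, the Nakayama step does not close: you get surjectivity of $\phi$, but $\ker\phi=\mathfrak m\ker\phi$ requires knowing that $\phi\bmod\mathfrak m$ is \emph{injective}, i.e.\ that the source modulo $\mathfrak m$ already has rank exactly $\dim K(I_{X\qu G})$---which is precisely the statement you are trying to bootstrap.

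The paper proves the rank equality by a genuinely different route that you should be aware of.  It identifies $\widehat{QK_G^0(X)}/\widehat{QKSR_X^G}$ with the ring of functions on a formal neighborhood $\widehat{\Crit(W)}$ of the critical locus of the Givental potential $W(y)=\sum_j q^{c_j}y^{\nu_j}$ on the dual torus, via $(1-X_j^{-1})\mapsto y^{\nu_j}$.  It then argues (using simpliciality of the moment polytope) that no families of critical points converge to nontrivial roots of unity as $q\to 0$, so the completion of Definition~\ref{formal} captures exactly the right critical points; finally it computes $\dim\widehat{\Crit(W)}$ by running the toric minimal model program and checking that each flip and each Mori fibration changes this dimension in the same way as $\dim QK(X\qu G)$ (citing \cite[Lemma 4.15]{gw:surject}).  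This MMP dimension count is what replaces your missing flatness argument, and it is not something that falls out of a formal deformation principle.
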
 

\begin{proof} In \cite[Theorem 2.6]{gw:surject} we proved a version of
  Kirwan surjectivity for the cohomological quantum Kirwan map.  The
  arguments given there hold equally well in rational topological K-theory as in
  cohomology.  We address first the surjectivity of the right arrow in
  the sequence.  By \cite[Proposition 2.9]{gw:surject} for $d$ such
  that $\mu_j(d) > 0$ for all $j$ we have
\begin{equation} \label{D0}
 D_0 \kappa_X^G \left( \prod_i^k (1 - X_j^{-1})^{s\lceil  \mu_i(d)
  \rceil} \right) = q^d [\mO_{I_{X \qu G}(\exp(d))}] + h.o.t. \end{equation}
where $h.o.t.$ denotes terms higher order in $q$.  Since divisor
intersections $[D_I ] = \cap_{i \in I} [D_i]$ generate the cohomology
$H(I_{X \qu G})$ of any $I_{X \qu G}$, the classes of their structure
sheaves $[\mO_{D_i}]$ generate the rational K-theory $K(I_{X \qu G})$.
It follows that $D_0 \widehat{\kappa_X^G}$ is surjective. 

To show exactness of the sequence, it suffices to show the equality of
dimensions
\[  \dim( \widehat{QK}_G(X)/QKSR_X^G ) = \dim( QK( X \qu G)) . \]  
We recall the argument in the case that the generic stabilizer is
trivial.  Let \(T = (\C^\times)^k/G\)
denote the residual torus acting on \(X \qu G\).
The moment polytope of \(X \qu G\) may be written
\[ \Delta_{X \qu G} = \{ \mu \in \t^\dual \ | \ (\mu, \nu_j) \ge c_j,
j = 1,\ldots, k \} \]
where \(\nu_j\) are normal vectors to the facets of \(\Delta_{X \qu G}\),
determined by the image of the standard basis vectors in \(\R^k\) in
\(\t\) under the quotient map, and \(c_j\) are constants determined by the
equivariant polarization on \(X\).  

The quantum cohomology may be identified with the Jacobian ring of a
{\em Givental potential} defined on the dual torus.  Let
$\Lambda = \exp^{-1}(1) \subset \t$ denote the coweight lattice, and
$\Lambda^\dual \subset \t^\dual$ the weight lattice.  The dual torus
and  {\em Givental potential} are 
\[ T^\dual = \t^\dual/\Lambda^\dual , \quad W: T^\dual \to
\C[q,q^{-1}], \quad y \mapsto \sum_{j=1}^k q^{c_j} y^{\nu_j} .\]
The quotient of $QK_G(X)$ by the Batyrev ideal maps to the ring
$\Jac(W)$ of functions on $\Crit(W)$ by
$ (1 - X_j^{-1}) \mapsto y^{\nu_j}$.   
Denote by \(\widehat{\Crit(W)}\)
the intersection of $\Crit(W)$ with a product $U$ of formal disks
around $q = 0, X_j \in \cF_j(X)$:
\[ \widehat{\Crit(W)} = \Crit(W) \cap U . \]
Under the identification of the Jacobian ring, $\widehat{\Crit(W)}$ is
the scheme of critical points $y(q)$ such that each $y^{\nu_j}$
approaches an element of $ \cF_j(X)$ as $q \to 0$.  This definition
differs from that in \cite{gw:surject} in that we allow in theory
critical points that converge to non-trivial roots of unity $\cF_j(X)$
in the limit $q \to 0$.

To see that this gives the same definition as in \cite{gw:surject} we
must show that there are no families of critical points that converge
to non-zero values of $y^{\nu_j}$ as $q \to 0$.  Let $y(q)$ be such a
family. Necessarily the $q$-valuation $\on{val}_q(y(q))$ of $y(q)$
lies in some face of the moment polytope.  Since the moment polytope
is simplicial, the normal vectors $\nu_j$ of facets containing
$\on{val}_q(y(q))$ cannot be linearly dependent.  Taking such $\nu_j$
as part of a basis for the Lie algebra of the torus one may write
$W(y) = y_1 + \ldots + y_k + h.o.t$ and taking partials with respect
to $y_1,\ldots, y_k$ shows that these variables must vanish.  Thus
$y(q)$ converges to zero as $q \to 0$.

The dimension of \(\widehat{\Crit(W)}\)
can be computed using the toric minimal model program \cite[Lemma
4.15]{gw:surject}: under the toric minimal model program each flip
changes the dimension of \(\widehat{\Crit(W)}\)
in the same as way as the dimension of \(QK(X \qu G)\).
\label{dimensionof}.  

Similarly for a Mori fibration one has a product formula
representing \(\dim(QK(X \qu G))\)
as the product of dimension of the base and fiber, and similarly for
the Jacobian ring.  It follows that $ \dim (\widehat{\Crit(W)})$ is
equal to $ \dim(\widehat{QK}_G(X) / QKSR_X^G) = \dim( QK(X \qu G)) .$
\end{proof}

\begin{remark}  \label{special}
\begin{enumerate} 
\item The presentation above specializes to Vezzosi-Vistoli presentation
  \cite[Theorem 6.4]{vv} by setting  \(q = 1\) in the case of smooth
projective toric varieties.  See Borisov-Horja
  \cite{bor:kth} for the case of smooth Deligne-Mumford stacks.
\item The presentation above restricts Buch-Mihalcea presentation
  \cite{buch:qk} in the case of projective spaces.  In the case of
  projective (or more generally weighted projective spaces realized as
  quotients of a vector space by a \(\C^\times\)
  action) we have \( m_{0,d}(\alpha_0) = 1\)
  for any \(d > 0 \).
  This implies \(\kappa_X^G(0) = 0\),
  by the arguments in Buch-Mihalcea \cite{buch:qk}: The moduli stack
  is non-singular, has rational singularities, the evaluation map
\[ \ev_0(d) : \ovl{\M}_0^G(\bA,X,d) \to X^{\exp(d)} \qu G \]   
is surjective and has irreducible and rational fibers.  By
\cite[Theorem 3.1]{buch:qk}, the push-forward of the structure sheaf
$\mO_{ \ovl{\M}_0^G(\bA,X,d) }$ is a multiple of the structure sheaf
on \( X^{\exp(d)} \qu G\).
It follows from the inductive formula \eqref{inductive} for
\(\kappa_X^G(0)\)
that \(\kappa_X^G(0)\) is the structure sheaf on $I_{X \qu G}$.

\item The linearized quantum Kirwan map has no quantum corrections in
  the case of circle group actions on vector spaces with positive
  weights.  To see this, note that for \(d > 0\)
  the push-forward of \(\ev_1^* (1 - X_j)\)
  is the pushforward of the structure sheaf \(\cO_{\sigma_j^{-1}(0)}\)
  to \(X \qu G\).
  By the argument in the previous item, this push-forward is equal to
  \([\cO_{X \qu G}]\).  For similar reasons, for \(d > 0 \) we have
\[ m_{2,d}( \ev_1^* (1 - X_j), [\cO_{\pt}]) =  1 .\]
The formula \eqref{inductive} then implies that
\( D_0 \kappa_X^G( 1- X_j ) \)
has no quantum corrections, hence neither does
\(D_0 \kappa_X^G(X_j)\).
It follows that in the presentation \eqref{present} the class \(X_j\)
may be taken to be the line bundle associated to the weight \(\mu_j\)
on the weighted projective space \(X \qu G = \P(\mu_1,\ldots,\mu_k)\).
It seems to us at the moment that even in the case of Fano toric
stacks one might have \(\kappa_X^G(0) \neq 0\)
and so the bulk deformation above may be non-trivial.
\end{enumerate} 
\end{remark} 

\section{K-theoretic gauged Gromov-Witten invariants} 

In this section we define gauged K-theoretic Gromov-Witten invariants
by K-theoretic integration over moduli stacks of Mundet-semistable
maps to the quotient stack, and prove an adiabatic limit Theorem
\ref{largearea} relating the invariants.

\subsection{The K-theoretic gauged potential}

In our terminology, a gauged Gromov-Witten invariant is an integral
over gauged maps, by which we mean maps to the quotient stack.
Let $C$ be a smooth projective curve.  

\begin{definition} \label{gmap} {\rm (Gauged maps)}  A gauged map  from  \(C\) to \(X/G\) consists of
\begin{itemize} 
\item {\rm (Curve)} a nodal projective curve \(\hat{C} \to S\) over an
  algebraic space \(S\);
\item {\rm (Markings)} sections \(z_0,\ldots, z_n : S \to \hat{C}\)
  disjoint from each other and the nodes;
\item {\rm (One-form)} a stable map \(\hat{C} \to C\) of homology class \([C]\);
\item {\rm (Map)} a map \(\hat{C} \to X/G\)
  to the quotient stack \(X/G\),
  corresponding to a bundle \(P \to \hat{C}\)
  and section \(u: \hat{C} \to P(X) := (P \times X)/G\)
  which we required to be pulled back from a map \(C \to BG\).
\end{itemize} 
\end{definition}

A gauged map is {\em stable} it satisfies a slope condition introduced
by Mundet \cite{mund:corr} which combines the slope conditions in
Hilbert-Mumford and Ramanathan for $G$-actions and principal
$G$-bundles respectively.  Given a gauged map
\[ (u:\hat{C} \to C \times X/G, z_0,\ldots, z_n) \]
let
\[ \sigma: C \to P/R \]
be a parabolic reduction of \(P\)
to a parabolic subgroup $R \subset P$. Let 
\[ \lambda \in \lie{l}(P)^\dual  \] 
be a central weight of the  Levi subgroup $L(P)$ of $R$.
By twisting the bundle and section by the one-parameter subgroup
$z^\lambda, z \in \C$ we obtain a family of gauged maps
\[ u_\lambda: \hat{C} \times \C^\times \to C \times X/G .\]
By Gromov compactness the limit $z \to 0$ gives rise to an {\em
  associated graded} gauged map
\[ u_\infty: \hat{C}_\infty \to X/ G \]
equipped with a canonical infinitesimal automorphism
\[ \lambda_\infty: \hat{C}_\infty \to \aut(P_\infty) \]
where \(P_\infty\)
is the \(G\)-bundle
corresponding to \(u_\infty\).
The automorphism naturally acts on the determinant line bundle
\(\det \aut(P_\infty)\)
as well as on the line bundle induced by the linearization
\(u_\infty^* (P_\infty(\cL) \to P(X))\).
The action on the first line bundle is given by a {\em Ramanathan
  weight} while the second is the {\em Hilbert-Mumford weight}
\[ 
\lambda_\infty \cdot \delta_\infty = i \mu_\lambda^R(u) 
\delta_\infty
\quad 
\lambda_\infty \cdot \ti{u}_\infty = i \mu_\lambda^{HM}(u) 
\ti{u}_\infty \] 
for points $\delta_\infty$ resp. $\ti{u}_\infty$ in the fiber
of $\det \aut(P_\infty)$ resp. 
\(u_\infty^* (P_\infty(\cL) \to P(X))\).
Let $\rho > 0$ be a real number.  The {\em Mundet weight} is
combination of the Ramanathan and Hilbert-Mumford weights with {\em
  vortex parameter} $\rho \in \R_{> 0}$
\begin{equation} \label{rhosum} \mu^M(\sigma,\lambda)= \rho
  \mu^R(\sigma,\lambda) + \mu^{HM}(\sigma,\lambda) .\end{equation}

\begin{definition} A gauged map \(u:\hat{C} \to C \times X/G\)
  is {\em Mundet semistable} if
\begin{enumerate} 
\item  $\mu^M(\sigma,\lambda)\leq 0 $
for all pairs \((\sigma,\lambda)\)
and 
\item each component $C_j$ of \(\hat{C}\)
  on which \(u\)
  is trivializable (as a bundle with section) has at least three
  special points $z_i \in C_j$. 
\end{enumerate} 
The map $u$ is {\em stable} if, in addition, there are only finitely
many automorphisms in $\Aut(u)$.
\end{definition} 

Gauged K-theoretic Gromov-Witten invariants are defined as virtual
Euler characteristics over moduli stacks of Mundet-semistable gauged
maps.  Denote by \(\ovl{\M}^G(C,X)\)
the moduli stack of Mundet semistable gauged maps.  Assume that the
semistable locus is equal to the stable locus, in which case
\(\ovl{\M}^G(C,X)\)
is a Deligne-Mumford stack with a perfect obstruction theory, proper
for fixed numerical invariants \cite{qkirwan}.   Restriction to the
sections defines an evaluation map 
\[ \ev:  \ovl{\M}^G(C,X) \to (X/G)^n  .\] 
Forgetting the map and stabilizing defines a morphism 
\[ f: \ovl{\M}^G(C,X) \to \ovl{\M}_n(C), \quad ( C, u ) \mapsto
C^{\on{st}} \]
where $ \ovl{\M}_n(C)$ is the moduli stack of stable maps to $C$ of
class $[C]$.  For classes \(\alpha_1,\ldots, \alpha_n \in K_G^0(X)\),
\( \beta_n \in \ol{\M}_n(C) \) and \(d \in H_2^G(X)\) we denote by
\begin{multline} \label{IGA}
  \tau_{X,n,d}^G(C,\alpha_1,\ldots,\alpha_n; \beta) := \\ \chi^{\vir}(
  \ovl{\M}_n^G(C,X,d), \ev_0^* \alpha_0 \otimes \ev_1^* \alpha_1
  \otimes \ldots \ev_n^* \alpha_n ) \otimes f^* \beta_n \in
  \Z \end{multline}
the virtual Euler characteristic. Define the {\em gauged
  K-theoretic Gromov-Witten potential} as the formal sum
\begin{multline} \tau_{X}^G: QK_G^0(X) \times K(\ol{\M}_n(C)) \to
  \Lambda_X^G, \\
(\alpha,\sigma) \mapsto \sum_{n \ge 0,d \in H_2^G(X,\Z)} \frac{q^d}{n!}
\tau_{X,n,d}^G(C,\alpha,\ldots,\alpha;\beta_n) .\end{multline}

In the case of domain the projective line the gauged potential can be
further localized as follows.  Let \(C = \P^1\)
be equipped with the standard \(\C^\times\)
action with fixed points \(0,\infty \in \P^1\).
Denote by \({z}\)
the equivariant parameter corresponding to the \(\C^\times\)-action.
As in \cite[Section 9]{qkirwan} let
$\ol{\M}^G_{n+1}(\C_+,X,d)^{\C^\times}$ denote the stack of
$n+1$-marked Mundet-semistable gauged maps
$P \to \P^1, u: \P^1 \to P(X)$ with the following properties: the data
$(P,u)$ are fixed up to automorphism by the $\C^\times$ action, and
with one marking at $0$ and the remaining markings mapping to
components attached to $0$, and the pair $(P,u)$ is trivializable in a
neighborhood of $\infty \in \P^1$.  The moduli space
$\ol{\M}^G_{n+1}(\C_-,X,d)^{\C^\times}$ 
\label{reversing} is defined similarly but
replacing $0$ with $\infty$ and vice-versa.  It is an observation of
Givental (in a more restrictive setting) that the $\C^\times$-fixed
locus in $\ol{\M}^G(\P^1,X,d)$ for sufficiently small vortex parameter
$\rho$ is naturally a union of fiber products
\begin{multline} \ol{\M}^G_{n_- + n_+}(\P^1,X)^{\C^\times} = \\
\bigcup_{ \stackrel{n_- + n_+ = n }{ d_- + d_+ = d}} 
\ol{\M}^G_{n_- + 1}(\C_-,X,d_-)^{\C^\times}
\times_{\ol{I}_{X \qu G}} \ol{\M}^G_{n_+ + 1}(\C_+,X,d_+ )^{\C^\times} \end{multline}
Indeed the bundle $P \to \P^1$ is given via the clutching construction 
by a transition map corresponding to an element $d \in \g$ and the map 
$u$ on $\C_\pm$ is given by an orbit of the one-parameter subgroup 
$u(z) = \exp(zd) x$ generated by $d$, for some $x \in X$.

Integration over the factors in this fiber product define {\em
  localized gauged graph potentials} \(\tau_{X,\pm}^G\)
as follows.  The stack $\M^G(\C_\pm,X,d)^{\C^\times}$ has a natural
equivariant perfect obstruction theory, as a fixed point stack in
$\M^G(\P^1, X,d)$.  The perfect obstruction theory for
$\M^G(\C_\pm,X,d)$ on the fixed locus splits in the \emph{fixed} and
\emph{moving} parts.  A perfect obstruction theory for
$\M^G(\C_\pm,X,d)^{\C^\times}$ can be taken to be the fixed part.  Let
\(N_\pm\)
denote the virtual normal complex of $\M^G(\C_\pm,X,d)^{\C^\times}$ in
$\M^G(\P^1,X,d)$.  Define
\begin{eqnarray*}
  \tau_{X,\pm}^G: QK_G^0(X) &\to& QK(X \qu G)[z^{\pm 1},{z}^{\mp 1}]], \\
  \alpha &\mapsto& \sum_{n \ge 0, d \in H_2^G(X,\Q)} \frac{q^d}{n!}
  \ev_{\infty,*}^d \frac{\ev^* \alpha^{\otimes n} }{
    \Eul(N^\dual_\pm)}.
 \end{eqnarray*}

 \begin{example} \label{Ifunction} The gauged graph potentials for
   toric quotients are $q$-hypergeometric functions described in
   Givental-Lee \cite{givlee:qk}.  Let \(G\)
   be a torus acting on a vector space \(X\)
   is a vector space with weights \(\mu_1,\ldots,\mu_k\)
   and weight spaces \(X_1,\ldots,X_k\)
   with free quotient \(X \qu G\).
   For any given class \(\phi \in H_2^G(X,\Z) \cong \g_\Z\),
   we have (omitting the classical map $K^G(X) \to K(X \qu G)$ from
   the notation)
  \begin{equation} \label{G0} \tau_{X,-}^G(0) = \sum_{d \in H_2^G(X)}
    q^d \frac{ \prod_{j=1}^k \prod_{m = -\infty}^{0} (1 - {z}^mX_j ^{-1}) }{
      \prod_{j=1}^k \prod_{m=-\infty}^{\mu_j(d)} (1 - {z}^m X_j^{-1} ) }.
\end{equation}
Note that the terms with \(X \qu_d G = \emptyset\)
contribute zero in the above sum since in this case the factor in the
numerator \( \prod_{\mu_j(d) < 0} (1 - X_j^{-1}) \) vanishes.

Arbitrary values of the gauged potential can be computed as follows,
using a result of Y.P. Lee \cite{lee:chi} on Euler characteristics on
the moduli spaces of genus zero marked curves.  Since there are no
non-constant holomorphic spheres in \(X\),
the evaluation maps \(\ev_1,\ldots, \ev_{n}\)
are equal on \(\ovl{\M}^G_n(\C_\pm,X)^{\C^\times}\).
Let 
\[ L_i \to \ovl{\M}^G_n(\C_\pm,X), \quad (L_i)_{u: C \to X/G, \lambda,
  \ul{z}} = T^\dual_{z_i} C \]
denote the cotangent line at the $i$-th marked point.  We compute the
push-pull as follows: On the component of
$\ovl{\M}^G_n(\C_\pm,X)^{\C^\times}$ corresponding to maps of degree
$d$ the pushforward is given by 
\begin{multline} \label{integral} \ev_{\infty,*}^d 
  \frac{\ev^* \alpha^{n}}{ \mp (1 - {z}^{\pm 1}) (1 - {z}^{\pm 1} L_{n+1})}
   \\ = \frac{\Psi_d(\alpha)^{\otimes n} }{ (1 - {z}^{\pm 1})^{2}} \chi
  \left(\ovl{\M}_{0,n+1},\sum_d ( L_{n+1} {z}^{\pm 1})^d \right)
  \end{multline}
  where 
\[ (\Psi_d \alpha)(g) = \alpha(gz^d)  .\]   
The integral \eqref{integral} can be computed using a result of
Y.P. Lee \cite[Equation (3)]{lee:chi} on Euler characteristics over
\(\ovl{\M}_{0,n+1}\)
(note the shift by \(1\)
in the variable \(n\) to relate to Lee's conventions):
\[ \chi \left(\ovl{\M}_{0,n+1},\sum_d ( ({z}^{\pm 1} L_{n+1})^{\otimes
    d}) \right)
= (1 - {z}^{\pm})^{n-1} .\]
This implies that for \(\alpha \in K_G^0(X)\)
\begin{equation} \label{Ifun} \tau_{X,-}^G(\alpha) = \sum_{d \in
    H_2^G(X)} q^d \exp \left( \frac{\Psi_d (\alpha)}{1 - {z}^{-1}}
  \right) \frac{ \prod_{j=1}^k \prod_{m = -\infty}^{0} (1 - X_j^{-1} {z}^m
    ) }{ \prod_{j=1}^k \prod_{m=-\infty}^{\mu_j(d)} (1 - X_j^{-1} {z}^m )
  }.
\end{equation} 
This is a version of Givental's K-theoretic {\em \(I\)-function},
see Givental-Lee \cite{givlee:qk} and Taipale \cite{taipale}.
\end{example} 

\subsection{The adiabatic limit theorem} 

In the limit that the linearization tends to infinity, the gauged
Gromov-Witten invariants are related to the Gromov-Witten invariants
of the quotient in K-theory.  Let $\C^\times$ act on $\M^G_X(\bA,X)$
via the weight $1$ action on $\bA$.   The quantum Kirwan map then has
a natural $\C^\times$-equivariant extension 
\[ \kappa_X^G: K_G^0(X) \to K^0_{\C^\times} (X \qu G) \] 
defined by push-forward using the $\C^\times$-equivariant virtual
fundamental sheaf.  The following is a K-theoretic version of a result
of Gaio-Salamon \cite{ga:gw}:

\begin{theorem}[Adiabatic Limit Theorem]
\label{largearea} 
Suppose that \(C\) is a smooth projective curve and \(X\) a polarized
projective \(G\)-variety such that stable=semistable for gauged maps
from \(C\) to \(X\) of sufficiently small \(\rho\).   Then
\begin{equation} \label{largerel} {\tau}_{X \qu G} \circ {\kappa_X^G}
  = \lim_{\rho \to 0} {\tau}_X^G: QK_G^0(X) \to
  \Lambda_X^G \end{equation}
in the following sense: For a class $\beta \in K(\ol{\M}_{n,1}(C))$
let
\[ \sum_{k=1}^l \beta^k_{\infty} \otimes \beta^k_1 \otimes \ldots
\beta^k_r , \quad \beta_0 \]
be its pullbacks to
\[ K \left( \ol{ \M}_r(C) \times \prod_{j=1}^r \ol{\M}_{|I_j|}(\C)
\right) , \quad \text{resp.}  \ K( \ol{\M}_n(C) )\]
respectively.  Then
\[ 
\sum_{I_1 \cup \ldots \cup I_r = \{ 1 ,\ldots, n \} }
\sum_{k=1}^l 
\tau_{ X \qu G}^r (\alpha,\beta_\infty^k) \circ \kappa_X^{G,|I_j|}(
\alpha, \beta_j^k) = \lim_{\rho \to 0} \tau_X^{G,n}( \alpha, \beta_0)
. \]
Similarly for the localized graph potentials (without insertions of
classes on the source moduli spaces)
\[ \tau_{X \qu G,\pm} \circ \kappa_X^G = 
\tau_{X,\pm}^G : QK_G^0(X) \to QK^0_{\C^\times}(X \qu G) .\]
\label{Jresults} 
\label{Jlargerel} \label{Jlargearea}
\end{theorem}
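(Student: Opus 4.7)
The plan is to construct a master moduli space that interpolates between Mundet-semistable gauged maps with small vortex parameter and configurations of stable maps to \(X \qu G\) decorated with affine gauged map ``bubbles'' at the nodes, then extract the desired equality by taking pushforwards of virtual structure sheaves over the two degenerate fibers. This is the K-theoretic analogue of the Gaio--Salamon adiabatic limit argument; the cohomological version is carried out in the companion paper \cite{qkirwan} using a moduli space of scaled gauged maps.

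First, I would introduce a moduli stack \(\ovl{\M}^G(C \times \bA, X, d)\) parametrizing data \((\hat{C} \to C, u: \hat{C} \to X/G, \lambda, \ul{z})\), where \(\lambda\) is an \(\bA^1\)-valued scaling on the bubble components attached to \(C\) satisfying monotonicity, and \(u\) is Mundet semistable with respect to a vortex parameter determined by \(\lambda\). The scaling gives a forgetful morphism \(\ovl{\M}^G(C \times \bA, X, d) \to \ovl{\M}_0(\bA) \cong \P^1\), exhibiting the master space as a family interpolating between vortex parameters \(\rho = 0\) and \(\rho = \infty\). Standard deformation theory (in the spirit of \cite{qkirwan}) equips the master space with a perfect obstruction theory and hence a virtual structure sheaf \([\cO^\vir]\).

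Second, I would identify the two fibers of this family. The fiber over \(\rho = 0\) is \(\ovl{\M}^G_n(C, X, d)\), producing \(\tau_X^{G,n}(\alpha, \beta_0)\). The fiber over \(\rho = \infty\) decomposes, by the monotonicity and map stability conditions in Definition \ref{affgauge}, as a union of fiber products
\[
\ovl{\M}_r(C, X \qu G, d_\infty) \times_{(\ol{I}_{X \qu G})^r} \prod_{j=1}^r \ovl{\M}^G_{|I_j|+1}(\bA, X, d_j),
\]
over ways of distributing the markings and classes, where a stable map to \(X \qu G\) of class \(d_\infty\) is glued to affine gauged map bubbles at \(z_0\). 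Applying the K-theoretic splitting axiom for virtual structure sheaves at nodes (as in Y.P. Lee \cite[Section 6]{lee:qk1}) to each such stratum, and then summing over \(r\) and over the \(d_j\), gives the contribution \(\sum_k \tau_{X \qu G}^r(\alpha, \beta_\infty^k) \circ \kappa_X^{G,|I_j|}(\alpha, \beta_j^k)\), provided that the inverse \(\mathcal{MC}_{X \qu G}^{-1}\) built into the definition \eqref{Dkappa} of \(\kappa_X^G\) precisely cancels the quantum corrections to the Mukai pairing at the gluing nodes. The stated equality \eqref{largerel} then follows from deformation invariance of the pushforward in K-theory along the family \(\ovl{\M}^G(C \times \bA, X, d) \to \P^1\). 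The localized identity for \(\tau_{X,\pm}^G\) with \(C = \P^1\) is obtained by carrying out the same argument \(\C^\times\)-equivariantly and applying the K-theoretic virtual localization formula \cite{to:rr} to restrict both sides to the fixed loci described after Example \ref{Ifunction}.

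The main obstacle will be establishing the correct K-theoretic splitting axiom at nodes where the scaling becomes infinite and verifying that it combines with \(\mathcal{MC}_{X \qu G}^{-1}\) to produce exactly \(\kappa_X^G\). Unlike in cohomology, where splitting uses the classical Poincar\'e pairing, the K-theoretic splitting yields the quantum-corrected pairing \(B_\alpha\) from \eqref{pairing}, and the relation \(\ul{B}^{-1} B_\alpha = D_\alpha \mathcal{MC}_{X \qu G}\) together with the geometric series expansion \eqref{contributes} is what converts this into the classical Mukai pairing against \(D_\alpha \kappa_X^G\). Once this compatibility is in place, the remainder of the argument is a direct bookkeeping exercise analogous to the proof of Theorem \ref{each}, summing the contributions of the boundary strata of the master space.
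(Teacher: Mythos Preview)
Your overall architecture matches the paper's: build the moduli stack of \emph{scaled} gauged maps \(\ol{\M}_{n,1}^G(\P^1,X)\), use the forgetful map to \(\ol{\M}_{0,1}(\P^1)\cong\P^1\), and compare the fibers over \(0\) and \(\infty\). The localized statement is indeed obtained by the same argument applied to the \(\C^\times\)-fixed locus. However, your treatment of the \(\infty\)-fiber and of how \(\mathcal{MC}_{X\qu G}^{-1}\) enters is incomplete.

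The fiber over \(\infty\) is not a single fiber product but a \emph{normal crossing union} \(\bigcup_{\cP} D_{\cP}\) of divisors indexed by unordered partitions \(\cP\) of the markings. In K-theory one therefore needs the inclusion--exclusion expansion
\[
[\cO_{\bigcup_{\cP} D_{\cP}}]=\sum_{\cP_1,\ldots,\cP_k}(-1)^{k-1}\,[\cO_{D_{\cP_1}\cap\cdots\cap D_{\cP_k}}],
\]
and the \(k\)-fold intersections are boundary strata indexed by \emph{trees} with several levels of bubbling, not just the single-level fiber products you wrote down. The paper's key step is that this alternating sum, after pushing forward, reproduces exactly the Taylor expansion of the \emph{full} formal inverse \(\mathcal{MC}_{X\qu G}^{-1}\) via the Bass--Connell--Wright tree inversion formula
\[
\mathcal{MC}^{-1}=\sum_{\Gamma}|\Aut(\Gamma)|^{-1}\prod_{v\in\Ver(\Gamma)}(-\mathcal{MC}_v),
\]
summed over labelled rooted trees. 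Your reference to the geometric series \eqref{contributes} only produces the \emph{linearized} inverse \(D_\alpha\mathcal{MC}^{-1}\), which suffices for Theorem \ref{each} (a statement about \(D_\alpha\kappa_X^G\)) but not for the full nonlinear composition \(\tau_{X\qu G}\circ\kappa_X^G\) asserted here. Without the tree formula, the combinatorics of matching the boundary contributions to \(\kappa_X^G\) does not close up.
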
 

\noindent In other words, the diagram 
\begin{equation} 
  \label{classdiag}
\begin{tikzcd}[every arrow/.append style={-latex}]
  QK_G^0(X) \arrow{dr}[swap]{\tau_X^G}
  \arrow{rr}{{\kappa}_X^G} & &QK^0_{\C^\times}( X \qu G)
  \arrow{dl}{\tau_{X \qu G}} \\ 
  & \Lambda_X^G &
\end{tikzcd}
\end{equation}
commutes in the limit \(\rho \to 0 \).

\begin{proof}[Sketch of Proof] The proof is similar to that in the
  cohomology case in \cite{qkirwan}: the proof only used an
  equivalence of divisor classes in the moduli stacks of scaled gauged
  maps. Let \(\ovl{\M}_{n,1}(\P^1)\)
  denote the moduli space {\em scaled} maps to \(\P^1\),
  that is, the space of maps $\phi: C \to \P^1$ of class \([\P^1]\)
  equipped with sections \(\lambda\)
  of the projectivized relative dualizing sheaf from \cite{qkirwan};
  this means that some component $C_0$ mapped isomorphically onto $\P^1$
  while the remaining components maps to points; either
  $\lambda | C_0$ is finite, in which case the remaining components
  $C_v \subset C$ have $\lambda | C_v= 0 $,  \label{CV} or $\lambda | C_0$ is
  infinite in which case there are a collection of bubble trees
  $C_1,\ldots, C_k \subset C$ attached to $C_0$ of the form described
  in \ref{affgauge}.  In particular, if there are no markings $n = 0$
  then there are no bubble components and there exists an isomorphism
  \[ \ovl{\M}_{0,1}(\P^1) \cong \P \]
  corresponding to the choice of section of the projectivized trivial
  sheaf.  Denote by
 \begin{equation} \label{f00} 
 f_{0,0}: \ovl{\M}_{0,n}(\P^1) \to \ovl{\M}_{0,0}(\P^1) \cong
 \P \end{equation} 
  the forgetful morphism forgetting the markings $z_1,\ldots, z_n$ but
  remembering the scaling $\lambda$.  We have the relation
\[ [\mO_{\cup_{\cP} D_{\cP}} ] =
[\mO_{f_{0,0}^{-1}(0) }] \in K(\ol{\M}_n(\P^1)) \]
where 
\[ D_{\cP} \cong \ol{\M}_r(\P^1) \times \prod_{i=1}^r
\ol{\M}_{i_j}(\bA) \]
is a divisor corresponding to the unordered partition
\[ \cP = \{ \cP_1,\ldots, \cP_r \}, \quad | \cP_j| = I_j \]
of the markings $\cP$ in groups of size $i_1,\ldots, i_r$.  The class
$\cup_{\cP} D_{\cP}$ is locally the union of prime divisors in a toric
variety and the standard resolution gives the equality in $K$-theory
\[ [\mO_{\cup_{\cP} D_{\cP}} ] = \sum_{\cP_1,\ldots, \cP_k} (-1)^{k-1}
  [\mO_{D_{\cP_1} \cap \ldots \cap D_{\cP_k} } ] .\]
  Two divisors $D_{\cP_1}, \ldots, D_{\cP_k}$ intersect if and only
  the partitions $\cP_1, \ldots, \cP_k$ have a common refinement (take
  a curve $(C,\ul{z})$ in the intersection and the partition
  determined by the equivalence class given by two markings are
  equivalent if they lie on the same irreducible component) and any
  type $\M_\Gamma(\P^1)$ in the intersection corresponds to some common
  refinement $\cP$.  Thus in the case of a non-empty intersection
  $D_{\cP_1} \cap \ldots \cap D_{\cP_k} $ we may assume that each
  $\cP$ refines each $\cP_j$.  

  Define a moduli space of maps with scaling as follows.  If
  $(C,\lambda,\ul{z})$ is a scaled curve and $u: C \to X/G$ a morphism
  we say that the data $(C,\lambda,\ul{z})$ is stable if either
  $\lambda | C_0$ is finite and is Mundet semistable or
  $\lambda | C_0 $ is infinite and each bubble tree is stable in the
  sense of \ref{affgauge}.  By \cite{reduc}, the moduli stack of
  scaled gauged maps $\ol{\M}_{n,1}^G(\P^1,X)$ is proper with a perfect
  obstruction theory.  Virtual Euler characteristics over
  $\ol{\M}_{n,1}^G(\P^1,X)$ define invariants
\[   K_G^0(X) \otimes K(\ol{\M}_{n,1}(\P^1)) \to \Z \] 
with the property that $\alpha \otimes [\mO_{\pt}]$ maps to
$\tau_G^X(\alpha)$.  We have a natural forgetful morphism 
\[ f: \ol{\M}_{n,1}^G(\P^1,X) \to \ol{\M}_{0,1}(\P^1) \cong \P \] 
and the inverse image of $\infty$ is the union of divisor classes
corresponding to partitions according to which markings lie on which
bubble tree.

The contributions of intersections of divisors correspond to the terms
in the Taylor expansion of the inverse of the Maurer-Cartan map, using
the {\em tree inversion formula} of Bass-Connell-Wright \cite[Theorem
4.1]{bass}.  Consider the Taylor expansion
 \begin{multline}
 \mathcal{MC}(\alpha) = \on{Id} + \sum_{k_1,\ldots, k_r \ge 0}
 \frac{ \mathcal{MC}_{(i_1,\ldots, i_r;)}
 \alpha_1^{i_1} \ldots
  \alpha_r^{i_r} } { i_1! \ldots i_r! },\\
  \mathcal{MC}_{(i_1,\ldots, i_r)} \in QK(X \qu G)
 . \end{multline}
  The tree formula for the formal inverse $\mathcal{MC}^{-1}$ of
  $\mathcal{MC}$ reads
\[ \mathcal{MC}^{-1} =\sum_{\Gamma} |\on{Aut}(\Gamma)|^{-1} \prod_{v \in \Ver(\Gamma)} (-
\mathcal{MC}_v) \]
where the sum over $\{ 1,\dots, \dim(QH(X \qu G)) \}$-labelled trees \label{ltrees}
$\Gamma$,
\[ \mathcal{MC}_v ( \sigma_1,\ldots, \sigma_{k(v)} ) = \sum_{i_1,\ldots,
  i_{k(v)}} \mathcal{MC}_{i_1(v),\ldots, i_r(v)} ( \sigma_1 , \ldots,
\sigma_{r}) \]
is the $|v|$-th Taylor coefficient in $\mathcal{MC} - \on{Id}$ for the
labels incoming the vertex $v$ considered as a symmetric polynomial in
the entries; and composition is taken on the tensor algebra using the
tree structure on $\Gamma$.  For example, for the tree corresponding
to the bracketing $(12)3$ the contribution of $\mathcal{MC}_2$ to
$\mathcal{MC}^{-1}$ is
$(- \mathcal{MC}_2) (- \mathcal{MC}_2 \otimes \on{Id})$.  See Wright
\cite{wright} for the extension of \cite{bass} to power series, and
also Kapranov-Ginzburg \cite[Theorem 3.3.9]{ginzburg}.  The argument
for the localized gauged potential is similar, by taking fixed point
components for the $\C^\times$-action on $\ol{\M}^G(\P^1, X)$.
\end{proof}

\subsection{Divided difference operators}

A result of Givental and Tonita \cite{giv:hrr} shows the existence of
a difference module structure on quantum K-theory for arbitrary target
which gives rise to relations in the quantum K-theory.  We follow the
treatment in \cite[Section 2.5, esp. 2.10-2.11]{iri:qkt}.  Let
\[ \lambda_1,\ldots, \lambda_k \in K(X) \] 
be classes of nef line bundles corresponding to a basis of \(H^2(X,\Z)\)
and $m(\lambda_i)$ the corresponding endomorphisms of $QK(X)$ given by
multiplication.  Define endomorphisms
\[ \cE_i = \tau( m(\lambda_i)^{-1} {z}^{q_i \partial_{q_i}} \tau^{-1} )  \in
\End(QK^0_{\C^\times}(X)) \]
where $\tau$ is a fundamental solution to $\nabla_\alpha^q \tau = 0$.
Define
\[ \cE_{i,\com} = \cE_i |_{{z} = 1} \in \End(QK(X)) .\]
Then any difference operator annihilating the \(J\)-function
defines a relation in the quantum K-theory: Following \cite[Remark
2.11]{iri:qkt} define
\[ \ti{\tau}_{X,\pm}^G = \left( \prod_{i=1}^r \lambda_i^{ - \ln(q_i)/\ln({z})}
\right) \tau_{X,\pm}^G .\]
The results in \cite[Section 2]{iri:qkt} give relations corresponding
to operators annihilating the fundamental solution.  Working with
topological $K$-theory we define coordinates $t_1,\ldots, t_r$ by
  \[ \aleph = t_1 \aleph_1 + \ldots t_r \aleph_r, \quad
  \aleph_1,\ldots, \aleph_r \text{\ a basis of \ } QK(X \qu G) .\]

\begin{theorem} \label{ddrel} For any divided-difference operator
\[  \Box \in
 \Q[{z},{z}^{-1}][[q,t]] \langle {z}^{q_i \partial_{q_i}}, (1 -
   {z}^{-1}) \partial_\alpha , \alpha \in QH(X \qu G) \rangle \] 
   (where angle brackets denote the sub-ring of differential operators
   generated by these symbols) we have \label{subring}
\[ \Box({z},{q},t,{z}^{q_i \partial_{q_i}}, (1 -
 {z}^{-1} )\partial_\alpha) \ti{\tau}_{X,-}^G = 0 \implies
 \Box({z},{q},t,\mE_i, \nabla_\alpha^{z}) 1 = 0 \]
 and setting \({z} = 1\)
 gives rise to the relation involving quantum multiplication
 $m(\alpha)$ by $\alpha$
 \[ \Box(1, q,t, \cE_{i,\com}, m(\alpha)) = 0 \]
 in the quantum K-theory $QK(X \qu G)$.
\end{theorem}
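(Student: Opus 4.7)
The plan is to derive the relations by pulling back differential relations for the localized gauged potential through the conjugation that defines $\cE_i$ and $\nabla_\alpha^z$. The main ingredients are already in place: the adiabatic limit Theorem \ref{largearea} identifies $\tau_{X,-}^G$ with $\tau_{X \qu G,-} \circ \kappa_X^G$, and by the Givental--Tonita identification (as stated in \cite{iri:qkt}) the localized graph potential $\tau_{X \qu G, -}$, viewed as a section of the $K$-theoretic $\Lambda$-module, is essentially a value of a fundamental solution $\tau$ of the quantum connection $\nabla^q$ applied to the identity $1 \in QK(X\qu G)$. The prefactor $\prod_i \lambda_i^{-\ln q_i/\ln z}$ is exactly what is needed so that $\ti\tau_{X,-}^G$ intertwines the action of $z^{q_i\partial_{q_i}}$ on the left with the action of $m(\lambda_i)^{-1} z^{q_i\partial_{q_i}}$ on the right (the extra $\lambda_i$ factor accounting for differentiating the prefactor).

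First I would verify the two intertwining identities. By the definition
\[
\cE_i = \tau\, m(\lambda_i)^{-1} z^{q_i\partial_{q_i}}\, \tau^{-1},
\]
applying $\cE_i$ to $1$ and then $\tau$ on the right to transport back is the same as $z^{q_i\partial_{q_i}}$ applied to $\ti\tau_{X,-}^G$ up to the $\lambda_i$-prefactor that was absorbed. Analogously, by the flatness of $\nabla^q$ we have $\tau^{-1}\circ \nabla_\alpha^{z}\circ \tau = (1-z)\partial_\alpha$ (acting on flat sections by ordinary differentiation), so $\nabla_\alpha^z$ conjugates to $(1-z^{-1})\partial_\alpha$ up to the rescaling of $z$ implicit in the normalization chosen in \cite{iri:qkt}. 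These two computations reduce to straightforward verification from the definitions of $\tau$, $\cE_i$, and $\nabla^q_\alpha$.

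With these intertwiners established, the first implication is essentially formal: any polynomial relation
\[
\Box\bigl(z,q,t,\,z^{q_i\partial_{q_i}},\,(1-z^{-1})\partial_\alpha\bigr)\,\ti\tau_{X,-}^G = 0
\]
conjugates, letter by letter, to
\[
\Box\bigl(z,q,t,\,\cE_i,\,\nabla_\alpha^{z}\bigr)\,1 = 0
\]
because the coefficients involving only $z, q, t$ are central and commute through $\tau$. The ring structure specified by the angle brackets is preserved under conjugation, so the substitution is well-defined.

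Finally, to obtain the classical-limit statement, I would set $z = 1$. On one side $\cE_i|_{z=1} = \cE_{i,\com}$ by definition. On the other, $\nabla_\alpha^z = (1-z)\partial_\alpha + m(\alpha)$ collapses to $m(\alpha)$ at $z=1$, because the derivative term carries the factor $(1-z)$. Thus the operator identity descends to
\[
\Box(1,q,t,\cE_{i,\com}, m(\alpha))\,1 = 0
\]
in $QK(X \qu G)$, which, since it holds applied to the unit, is a genuine multiplicative relation. The main obstacle is the first step: verifying cleanly that $\ti\tau_{X,-}^G$ is a fundamental solution applied to $1$ in the quantum-K normalization used here, since the shifts by $\lambda_i^{-\ln q_i/\ln z}$ and the completion over $\Lambda_X^G$ must be handled so that the formal conjugation makes sense as an identity of endomorphism-valued formal series; once this is done the remainder is bookkeeping.
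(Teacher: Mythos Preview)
Your proposal is correct and matches the paper's approach: the paper does not give an independent proof of this theorem but simply refers to \cite[Section 2, Remark 2.11]{iri:qkt} for the argument, and what you have outlined is precisely the conjugation argument from that reference (identify $\ti\tau_{X,-}^G$ with a fundamental solution applied to $1$ via the adiabatic limit theorem, verify the intertwining of $z^{q_i\partial_{q_i}}$ and $(1-z^{-1})\partial_\alpha$ with $\cE_i$ and $\nabla_\alpha^z$ through $\tau$ and the $\lambda_i$-prefactor, then conjugate and specialize at $z=1$). Your caveat about normalizations between $(1-z)$ and $(1-z^{-1})$ is apt and is the only place where care is needed.
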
 

\begin{example} \label{tdiff} {\rm (Toric varieties)} In the toric
  case, we recover some of the relations on quantum K-theory from
  Theorem \ref{tmain} as follows.  The operators
  \[ \mathcal{D}_{i,k} := 
1 - X_i ({z}^{q_i \partial_{q_i}} + 
z^{-k} (1 -
  {z}^{-1})\partial_i ) 
\]
satisfy  
\begin{equation}  \label{relation}
 \left(\prod_{\mu_i(d) \ge 0} \prod_{k=0}^{\mu_i(d)-1} \mathcal{D}_{i,k}
 - q^d 
\prod_{\mu_i(d) < 0} \prod_{i=1}^{-\mu_i(d)} 
\mathcal{D}_{i,k} 
\right) \hat{\tau}_{X,-}^G = 0 .\end{equation}
Using the expression \eqref{Ifun} and $\Psi_d(X_i) = X_i z^{\mu_i(d)}$
we have
\begin{multline} \label{using}
(1 - {z})\partial_i 
 \tau_{X,-}^G \\ = \sum_{d \in H_2^G(X)} q^d 
X_i z^{\mu_i(d)} \exp \left( \frac{ \Psi_d(\alpha)}{1 - {z}^{-1}}
\right) \frac{ \prod_{j=1}^k \prod_{m = -\infty}^{0} (1 - X_j^{-1} {z}^m ) 
}{ \prod_{j=1}^k \prod_{m=-\infty}^{\mu_j(d)} (1 - X_j^{-1} {z}^m ) }. \end{multline}
Hence 
\begin{multline} 
  \prod_{\mu_i(d) \ge 0} \prod_{k=0}^{\mu_i(d)-1} \mathcal{D}_{i,k}
  \tau_{X,-}^G(\alpha,q,z) \\ = \sum_{d \in H_2^G(X)} q^d
  \prod_{k=0}^{\mu_i(d)-1} (1 - X_i z^{\mu_i(d) - k}) \exp \left(
    \frac{ \Psi_d(\alpha)}{1 - {z}^{-1}} \right) \\ \frac{
    \prod_{j=1}^k \prod_{m = -\infty}^{0} (1 - X_j^{-1} {z}^m ) }{
    \prod_{j=1}^k \prod_{m=-\infty}^{\mu_j(d)} (1 - X_j^{-1} {z}^m ) }
  .\end{multline}
This equals 
\[ 
q^d \prod_{\mu_i(d) < 0} \prod_{k=1}^{- \mu_i(d)} \mathcal{D}_{i,k}
 \tau_{X,-}^G  \]
 hence the relation \eqref{relation}.  By \ref{ddrel} one obtains
 quantum Stanley-Reisner relations as in \eqref{qsrr}.\end{example}

\section{Wall-crossing in K-theory}

In this section we recall results on the dependence of the
\(K\)-theoretic
invariants of the quotient on the choice of polarization due to
Kalkman \cite{ka:co}, in the case of cohomology, and Metzler
\cite{metzler}, in the case of K-theory.
 
\subsection{The master space and its fixed point loci.} 
\label{ss:masterspace}

The geometric invariant theory quotients for two different
polarizations may be written as quotients by a circle group action on
a {\em master space}.  Let \(\LL_\pm \to X\)
two \(G\)-polarizations of \(X\).  Define
\begin{equation}
  \label{eq:tpolar}
\LL_t := \LL_-^{(1-t)/2}\otimes
\LL_+^{(1+t)/2}, \quad t \in (-1,1)\cap \Q
\end{equation}
the one-parameter family of rational polarizations 
given by interpolation.  

\begin{definition} The wall-crossing datum $(\LL_-,\LL_+)$ is
  \emph{simple} if the following conditions are satisfied:
  \begin{enumerate}
  \item The only \emph{singular value} is \(t=0\),
    this is, 
\[ X^{\ss}(\pm):=X^{\ss}(\LL_t) , \quad 0<\pm t \leq 1\] 
is constant.  We assume that stable equals semi-stable for
\(t\neq 0\),
that is, there are no positive-dimensional stabilizer subgroups of
points in the semi-stable locus.
\item The {\em strictly semistable set}
  \(X^{\ss}(\LL_0) \setminus (X^{\ss}(+)\cup X^{\ss}(-)) \)
  is connected.
  \item The only infinite stabiliser subgroup is a circle group: 
    \[ G_x \cong \C^\times, \quad \forall x\in X^{\ss}(\LL_0)
    \setminus (X^{\ss}(+)\cup X^{\ss}(-)), \quad \dim(G_x) > 0 . \]
\end{enumerate}
\end{definition} 

Given a simple wall-crossing, choose a one-parameter subgroup
\(\lambda:\Ct\to G\),
a connected component \(Z_\lambda\)
of the semi-stable fixed points \(X^{\ss}(\LL_0)^{\lambda}\).

\begin{definition}\label{def:master}
  The {\em master space} introduced Dolgachev-Hu \cite{do:va} and
  Thaddeus \cite{th:fl} is defined as follows.  The projectivization
  \(\P(\LL_- \oplus \LL_+) \to X \)
  of the direct sum \(\LL_- \oplus \LL_+ \to X\)
  is itself a $G$-variety, and has a natural polarization given by the
  relative hyperplane bundle \(\mO_{\P(\LL_- \oplus \LL_+)}(1)\)
  of the fibers, with stalks
\begin{equation} \label{relhyp} \mO_{\P(\LL_- \oplus
  \LL_+)}(1)_{[l_-,l_+]} = \on{span}(l_- + l_+)^\dual
  .\end{equation} 
Let \(\pi: \P(\LL_- \oplus \LL_+) \to X\) denote the
projection.  The group \(\C^\times\) acts on \(\P(\LL_- \oplus
\LL_+)\) by rotating the fiber \(w [l_-,l_+] = [l_-, wl_+]\).
The space of sections of \(\mO_{\P(\LL_- \oplus \LL_+)}(k)\)
has a decomposition under the natural \(\C^\times\)-action
with eigenspaces given by the sections of 
\[ \pi^* \LL_-^{k_-} \otimes \pi^* \LL_+^{k_+}, 
\quad k_- + k_+ = k, k_\pm \ge 0 .\]
The \(G\)-semistable
locus in \(\P(\LL_- \oplus \LL_+)\)
is the union of loci of invariant eigensections and so the union of
\(\pi^{-1}(X^{\ss,t})\)
where \(X^{\ss,t} \subset X\)
is the semistable locus for
\[ \LL_t=\LL_-^{(1-t)/2}\otimes \LL_+^{(1+t)/2}, t \in [-1,1] .\]  
Let
\[ \ti{X} := \P(\LL_- \oplus \LL_+) \qu G \] 
denote the geometric invariant theory quotient, by which we
mean the stack-theoretic quotient of the semistable locus.
The assumption on the action of the stabilizers implies that
the action of \(G\) on the semistable locus in \(\P(\LL_-
\oplus \LL_+)\) is locally free, so that stable=semistable
for \(\P(\LL_- \oplus \LL_+)\).  It follows that \(\ti{X}\) is a
proper smooth Deligne-Mumford stack.  The quotient \(\ti{X}\)
contains the quotients of \(\P(\LL_\pm) \cong X\) with respect
to the polarizations \(\LL_\pm\), that is, \(X \qu_\pm G\).
Moreover the quotient \(\ti{X}^{\ss}(t)/\Ct\) with respect to
the \(\Ct\)-linearisation \(\cO(t)\) is isomorphic to
\(X^{\ss}(\LL_t)/ G\).
This ends the definition.
\end{definition} 

Next we recall from \cite{wall} the fixed point sets for the circle
action on the master space.

\begin{lemma} The fixed point set $\ti{X}^{\C^\times}$ is the union of
  the quotients $X \qu_\pm G$ and the locus in $X^{\C^\times,\ss}$ in
  $X^{\C^\times} $ that is semistable for some $t \in (-1,1)$.  The
  normal bundle of $X^{\C^\times,\ss}$ in $\ti{X}$ is isomorphic to
  the normal bundle in $X$, while the normal bundles of
  $X \qu_{\pm} G$ are isomorphic to $\cL_\pm^{\pm 1}$.
\end{lemma}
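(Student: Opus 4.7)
The plan is to pull the statement back along the quotient map $\P(\LL_-\oplus \LL_+)^{\ss} \to \ti{X}$ and classify those $G$-orbits whose master-space $\C^\times$-orbits are contained in them; the two cases according to whether a fiber coordinate vanishes will produce, respectively, the boundary components $X\qu_\pm G$ and the interior middle component coming from $\lambda$-fixed loci. Normal bundles will then be computed using the standard description of tangent spaces of a projective bundle together with a $\C^\times$-weight analysis.

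First I would analyze $\C^\times$-fixed $G$-orbits in $\P(\LL_-\oplus \LL_+)$. Since the action is $w\cdot [l_-,l_+] = [l_-,wl_+]$, a representative $[x,[l_-,l_+]]$ represents a $\C^\times$-fixed point of $\ti{X}$ exactly when the $\C^\times$-orbit is contained in the $G$-orbit. If $l_+=0$ the action is trivial on the point and semistability in $\P(\LL_-\oplus \LL_+)$ reduces, via invariant sections of $\mO(k)$ restricted to the zero section \eqref{relhyp}, to $x\in X^{\ss}(\LL_-)$; this gives $X\qu_- G$, and symmetrically $l_-=0$ gives $X\qu_+ G$. If both $l_-,l_+$ are nonzero, the $\C^\times$-orbit is cancelled inside the $G$-orbit only by a one-parameter subgroup $\mu:\C^\times\to G_x$ whose action on the fiber $\P((\LL_-)_x\oplus(\LL_+)_x)$ matches the master-space action; under the simple wall-crossing hypothesis the only such $\mu$ is (up to conjugation) the distinguished $\lambda$, forcing $x\in X^{\lambda}$, and the semistability of $[x,[l_-,l_+]]$ for some interpolated linearization $\LL_t$, $t\in(-1,1)$, is equivalent to $x\in X^{\C^\times,\ss}$ as defined in the statement.

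Next I would compute the normal bundles. At $[x,[l_-,0]]$ the tangent space of $\P(\LL_-\oplus\LL_+)$ decomposes as $T_xX\oplus \Hom(\LL_-,\LL_+)_x$, and the second summand is the normal direction to the section $\{l_+=0\}$; after $G$-descent on the semistable locus $X^{\ss}(\LL_-)$ this yields the descended bundle $\cL_-^{-1}\otimes \cL_+$ restricted to $X\qu_- G$, and one identifies this with $\cL_-^{-1}$ using the triviality of the descent of $\LL_+$ relative to $\LL_-$ on this locus. The computation at $[x,[0,l_+]]$ is symmetric and gives $\cL_+$ on $X\qu_+ G$. For the middle component, the tangent space at $[x,[1,1]]$ splits as $T_xX\oplus(\LL_-^{-1}\otimes\LL_+)_x$, but the generator of $\lambda$ acts with opposite weights on $\LL_\pm$ by the Hilbert-Mumford analysis, so it acts nontrivially on the fiber direction; this identifies the fiber direction with the infinitesimal $\lambda$-orbit, which is quotiented out by $G$. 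Consequently the tangent space to $\ti{X}$ at such a point is $T_xX/\g\cdot x$, and its tangent subspace along $X^{\C^\times,\ss}$ is that of $X^\lambda/(G/\lambda)$, so the normal bundle agrees with the normal bundle of $X^\lambda$ in $X$ after descent.

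The main obstacle will be the middle-component analysis: verifying that the fiber $\P^1$ direction is exactly cancelled by the infinitesimal $\lambda$-orbit so that it contributes neither a tangent nor a normal direction to $\ti{X}^{\C^\times}$, and checking that the surviving normal directions carry the correct $\C^\times$-weight coming only from the master-space action on $X$. This reduces to showing that the $\lambda$-weights on $\LL_\pm$ at a strictly $\LL_0$-semistable point have opposite sign, which follows from the Hilbert-Mumford numerical criterion applied to $x\notin X^{\ss}(\LL_\pm)$ combined with $x\in X^{\ss}(\LL_0)$.
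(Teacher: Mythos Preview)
Your approach is essentially the same as the paper's: both lift to $\P(\LL_-\oplus\LL_+)^{\ss}$, split into the cases where one fiber coordinate vanishes (giving $X\qu_\pm G$) versus both are nonzero, and in the latter case produce the one-parameter subgroup of $G$ that cancels the master-space $\C^\times$-orbit. The paper phrases this last step via vector fields, writing $\xi_L(l)=\zeta_L(l)$ for a unique $\zeta\in\g$ (uniqueness from local freeness of the $G$-action) and then integrating; your orbit formulation is equivalent. The normal-bundle analysis for the middle component, where the fiber $\P^1$-direction is absorbed by the infinitesimal $\zeta$-direction in $\g$, is likewise parallel to the paper's quotient description.

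There is one gap. In your boundary computation you correctly identify the normal bundle to $\{l_+=0\}$ in $\P(\LL_-\oplus\LL_+)$ as $\LL_-^{-1}\otimes\LL_+$, but the step ``one identifies this with $\cL_-^{-1}$ using the triviality of the descent of $\LL_+$ relative to $\LL_-$ on this locus'' is not justified: there is no reason $\LL_+$ descends to a trivial line bundle on $X\qu_- G$. The honest answer is that $\nu_-\cong \cL_-^{-1}\otimes\cL_+$ with $\C^\times$-weight $+1$, and symmetrically $\nu_+\cong \cL_+^{-1}\otimes\cL_-$ with weight $-1$. The paper is itself imprecise here (it leaves this claim to the reader, and in the later application writes the normal bundles as $(\LL_+\otimes\LL_-)^{\pm 1}$), and what is actually used downstream is only the $\C^\times$-weight $\mp 1$, not the underlying isomorphism class. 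So do not try to force the bundle to be literally $\cL_-^{-1}$; just record the correct tensor product and its weight.
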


\begin{proof} 
  Any fixed point is a pair \([l_-,l_+]\)
  with a positive dimensional stabiliser under the action of
  \(\C^\times\).
  Necessarily the points with \(l_- = 0, l_+ = 0\)
  are fixed and they correspond to the quotients \(X\qu_{\pm} G\).
  However there are other fixed points when \(l_-,l_+\)
  are both non-zero when the projection to \(X\)
  is fixed by a one-parameter subgroup, as we now explain.  For any
  \(\zeta \in \g\),
  we denote by \(G_\zeta \subset G\)
  the stabiliser of the line \(\C\zeta\)
  under the adjoint action of \(G\).
  If \([l] \in \ti{X}^{\C^\times}\),
  with \(l \in \P(\LL_- \oplus \LL_+)\)
  then \([l] \in \ti{X}^\xi\),
  where \(\xi\)
  is a generator of the Lie algebra of \(\C^\times\)
  and \(\ti{X}^\xi\)
  is the zero set of the vector field \(\xi_{\ti{X}}\)
  generated by \(\xi\).
  Since \(\ti{X}\)
  is the quotient of \(\P(\LL_- \oplus \LL_+)\)
  by \(G\),
  if \(\xi_L\)
  denotes the vector field on \(\P(\LL_- \oplus \LL_+)\)
  generated by \(\xi\)
  then \( \xi_L(l) = \zeta_L(l)\)
  for some \(\zeta \in \g\).
  Since \(G\)
  acts locally freely \(\zeta\)
  must be unique.  Integrating gives \(z\cdot l = z^\zeta l\)
  for all \(z \in \C^\times\).
  By uniqueness of \(\zeta\),
  this holds for every point in the component of \(\ti{X}^\xi\)
  containing \([l]\).
  Thus for any fixed point \( \ti{x} \in \ti{X}^{\C^\times}\)
  with \(\ti{x} = [l]\)
  for some \(l \in \P(\LL_- \oplus \LL_+)\),
  there exists \(\zeta \in \g\) such that
\[ \forall z \in \C^\times, \quad  z\cdot l = z^\zeta l, \]
where \(z\mapsto z^\zeta\) is the one-parameter subgroup generated by
\(\zeta\).
By the definition of semistability, the argument above and 
our wall-crossing assumption, any fixed point \( \ti{x} \in
\ti{X}^{\C^\times}\) is in the fibre over \(x \in X\) that
is \(0\)-semistable  and has stabilizer given by 
the one-parameter subgroup generated by 
\(\zeta \in \g\), that is, 
the weight of the one-parameter subgroup generated by
\(\zeta\) on \((\LL_t)_x\)
vanishes:%
\[ 
 z^\zeta l = l, \quad \forall l \in (\LL_t)_x.
\]
Denote by \(X^\zeta \qu_0 (G_\zeta/ \C^\times_\zeta)\)
the quotient \([X^{\ss}(\LL_0)^\zeta/(G_\zeta/\C^\times_\zeta)]\).
Conversely, taking any lift gives a morphism
\[ \iota_\zeta: X^\zeta \qu_0 (G_\zeta/ \C^\times_\zeta) 
\to \ti{X}^{\C^\times}.
\]
The argument above shows that any $(l_-,l_+)$ with both non-zero is in
the image 
of some $\iota_\zeta$.
The normal bundle \(\nu_{\ti{X}^{\Ct}}\) of
\(\ti{X}^{\C^\times}\) in \(\ti{X}\) restricted to the image of
\(\iota_\zeta\) is isomorphic to the quotient of the normal
bundle of \(G \times_{G_\zeta} \P(\LL_- \oplus \LL_+)^{\xi
+ \zeta}\) by \(G\), which in turn is isomorphic via
projection to the quotient of the normal bundle of
\(\P(\LL_- \oplus \LL_+)^{\xi + \zeta}\) by \(\g/\g_\zeta\),
and then by the induced action of the smaller group
\(G_\zeta/\C^\times_\zeta\). 
\(G_\zeta/\C^\times_\zeta\)
acts canonically on the normal bundle 
\(\nu_{X^\zeta}/ (\g/\g_\zeta)\) and
induces a bundle \(( \nu_{X^\zeta}/ (\g/\g_\zeta)) \qu
 (G_\zeta/\C^\times_\zeta)\) over \(X^\zeta \qu_0
  (G_\zeta/C^\times_\zeta)\).
  The pull-back of the normal bundle
  \(\iota_\zeta^*( \nu_{\ti{X}^{\C^\times}})\)
  is canonically isomorphic to the quotient of
  \(\nu_{X^\zeta} / (\g/\g_\zeta)\)
  by \(G_\zeta / \C^\times_\zeta\),
  by an isomorphism that intertwines the action of \(\C^\times_\zeta\)
  on
  \((\nu_{X^\zeta} / (\g/\g_\zeta) ) \qu (G_\zeta / \C^\times_\zeta)\)
  with the action of \(\C^\times\)
  on \(\nu_{\ti{X}^{\C^\times}}\).
  The final claim is left to the reader.
\end{proof}

\begin{definition}  We introduce the following notation.
	\label{def:fp}
Denote by \(X^{\zeta,0}
\subset X^\zeta\) the fixed point loci
\(X^{\ss}(\LL_0)^{\zeta}\) that are \(0\)-semistable.  Denote by \(\nu_{X^{\zeta,0}}\) the \(\Ct_\zeta\)-equivariant normal bundle of
\(X^{\zeta,0}\) modulo \(\g/\g_\zeta\), quotiented by
\(G_\zeta/\C^\times_\zeta\). 	
\end{definition}

A natural collection of K-theory classes on the master space is
obtained by pull back.  The projection
\(\P(\LL_- \oplus \LL_+) \to X\)
is \(G\)-equivariant
and \(\C^\times\)-invariant
by construction.  Consider the canonical map
\begin{equation}\label{eq:deltamap}
\delta: K_G^0(X) \to K^0_{\C^\times}(\ti{X}) .
\end{equation}
obtained by composition of the natural pull-back
\[
K_G^0(X) \to K^0_{G \times \C^\times}( \P(\LL_- \oplus \LL_+))
\] 
with the Kirwan map
\[
K^0_{G \times \C^\times}( \P(\LL_- \oplus \LL_+)) \to
K_{\Ct}(\P(\LL_- \oplus \LL_+)\qu G)=K^0_{\C^\times}(\ti{X}).
\]
The composition of \(\delta\) with the Kirwan map
\[
  \ti{\kappa}_{X,t}^{\C^\times}: K^0_{\C^\times}(\ti{X}) 
  \to K(\ti{X} \qu_t \C^\times) = K(X \qu_t G)
\]
agrees with the pull-back to the \(\LL_t\)-semistable
locus \(X^{\ss}(\LL_{t})\).   It follows that 
\[ 
  \ti{\kappa}_{X,t}^{\C^\times} \circ \delta = 
  \kappa_{X,t}^G: K_G^0(X) \to K(X
\qu_t G) ,
\] 
is the Kirwan map for the geometric invariant theory  quotient of \(X\) with
respect to the polarisation \(\LL_t.\)
In particular, \(\delta(\alpha) \in
K^0_{\C^\times}(\ti{X})\) 
restricts
to \(\kappa^G_{X,\pm} \alpha\) on the two distinguished 
fixed point
components \(X \qu_\pm G \subset \ti{X}^{\C^\times}\).

The restrictions of these classes to fixed point sets are described as
follows.  After passing to a finite cover, we may assume that
$G_\zeta$ splits as the product
$G_\zeta/\C^\times_\zeta \times \C^\times_\zeta$.  For any
\(\alpha \in K_G^0(X)\),
the pull-back of
\(\ti{\kappa}_{X,0}^{\Ct}|_{\ti{X}^{\C^\times}}(\alpha)\)
under \(\iota_\zeta\)
is equal to the image of \(\alpha\)
under the restriction map
\(K_G^0(X) \to K_{\C^\times_\zeta}(X^\zeta \qu_0
(G_\zeta/\C^\times_\zeta))\).
Indeed \(TX|_{X^\zeta}\)
is the quotient of \(T\P(\LL_- \oplus \LL_+) |_{\pi^{-1}(X^\zeta)}\)
by \(\C^\times\).
Hence the action of \(G_\zeta\)
on \(T\P(\LL_- \oplus \LL_+)|_{\pi^{-1}(X^\zeta)}\)
induces an action of \(G_\zeta/\C^\times\)
on \(TX |_{X^\zeta}\).
After identifying \(\C^\times \cong \C^\times_\zeta\),
we have that \(\iota_\zeta \circ \ti{\kappa}_{X,0}^{\Ct}\)
is the pullback to \(X^\zeta \qu_0 (G_\zeta/\C^\times_\zeta)\).
For \(\alpha\in K_G^0(X)\) we will denote by
\[ \alpha_0 \in K_G( X^\zeta \qu_0
(G_\zeta/\C^\times_\zeta) ) \] 
its restriction.

\subsection{The Atiyah-Segal localization formula}
\label{ss:euler}

In this section we review the Atiyah-Segal localization formula in
equivariant K-theory \cite{as}, which expresses Euler characteristics
as a sum over fixed point loci.  Recall the definition of the Euler
class in $K$-theory.  Suppose \(Y\)
is a smooth variety. For a vector bundle $E \to Y$ and a formal
variable \(q\) denote the graded exterior power by
\begin{equation} \label{bigwedge} {\textstyle\bigwedge_q} E=
  \sum_{k=0}^\infty q^k {\textstyle\bigwedge^k}E \in K^0(Y)[q].
\end{equation} 
The K-theoretic Euler class of \(E\) is defined by 
\[
\Eul(E)=\textstyle \bigwedge_{-1} E^\vee= 1-E^{\vee} + \bigwedge^2
E^{\vee} - \dots \in K^0(Y) .
\]
Suppose now that \(T=\C^\times\)
acts trivially on \(Y\),
and that \(a=\C_{(1)}\)
is the weight 1 one-dimensional representation. Thus the equivariant
K-theory of \(Y\) is given by
\[ K_T^0(Y)=K^0(Y)\otimes K_T^0(\pt)=K^0(Y)\otimes\Z[z,z^{-1}] . \] 
If \(E\)
is a coherent \(T\)-equivariant
sheaf, its decomposition into isotypical components will be denoted
\begin{equation}\label{eq:isotypical1}
  E=\bigoplus_{i=1}^k z^{\mu_i} E_i
\end{equation}
where \(\mu_i\in \Z\) is the weight of the action on \(E_i\). 
The K-theoretical equivariant Euler class of \(E\) is given by 
\[
  \Eul_{T} (E)=\prod_i  \Eul (z^{\mu_i} E_i) \in K_T(Y) .
\]

The localization formula involves an integral over fixed point
components with insertion of the inverted Euler class.  Suppose that
\(T=\Ct\)
acts on \(X\)
(non-trivially) and with fixed point set \(X^T\).
The previous paragraph discussion applies to the \(T\)-equivariant
normal bundle \(\nu_{X^T}\), that is, the Euler class \label{eulerclass}
\[ \Eul_{T} (\nu_{X^T})\in K_T(X^T)=K(X^T)[z,z^{-1}] \]
has been formally inverted through localisation.  Denote by
\(K^{\loc}_T(X)\),
the equivariant K-theory ring localized at the ideal of $R(T)$
generated by the Euler classes of representations $\C_{\mu_i}$ with
weights $\mu_i, i =1,\ldots, k$ (or more alternatively, one could
localize at the roots of unity in the equivariant
parameter).\footnote{In the orbifold case, one must localize at the
  roots of unity that appear in the denominators in the Riemann-Roch
  formula as in Tonita \cite{to:rr}.}  The \emph{K-theoretic
  localisation formula} of Atiyah-Segal \cite{as}, see also
\cite[Chapter 5]{chriss:rep}) states that in \(K^{\loc}_T(X)\)
\begin{equation}\label{eq:loc}
  [ \cO_X ]  = [ \iota_* \left( \cO_{X^T} \otimes
    \Eul_T (\nu_{X^T}^\dual)^{-1}\right) ] 
  = \left[ \sum_{F\to X^{T}}  \cO_{F} \otimes
    \Eul_T(\nu_F^\dual)^{-1} \right] 
\end{equation}
for the inclusion \(\iota:X^T \to X\).
Here the sum runs over all fixed components $F$ of $X^T$. In terms of
Euler characteristics we have
\[
  \chi(X; \cF) = \sum_{F \to X^T}\chi(F; \cF\otimes
  \Eul_T(\nu_F^\dual)^{-1}) \in R(T).
\]

The Atiyah-Segal localization formula implies a wall-crossing formula
for K-theoretic integrals under variation of geometric invariant
theory quotient due to Metzler \cite{metzler}.  Metzler's formula uses
the following notion of {\em residue}, related to a power series
expansion in a localized ring.  For any $T$-equivariant locally free
sheaf \( \cF\) \label{scriptF}
of rank \(r\) on a variety \(Y\) with trivial $T$-action the class
\[ \cF_0=\cF-r \cO_Y \in K_T(Y) \]
is nilpotent (c.f. \cite[Prop. 5.9.5]{chriss:rep}).  By taking
exterior powers with notation as in \eqref{bigwedge} we have
\[ {\textstyle \bigwedge_q} \cF = {\textstyle \bigwedge_q} (\cF_0 +r
\cO_Y) = (1+q)^{r} {\textstyle\bigwedge_{\frac{q}{1+q}}} \cF_0 \in
K_T(Y)[q].
\]
Using this formula on the weight \(\mu_i\) bundle
\(E_i\) gives 
\[
\Eul_{T}(E_i)= (1-z^{-\mu_i})^{r_i} (1 + N_i)
\]
where \(r_i=\rank E_i\),
and \(N_i \in K(Y)\otimes\Z[z,z^{-1}]\)
is a combination of nilpotent elements in \(K(Y)\)
whose coefficients are monomials of the form
\(\frac{z^{-\mu_i}}{1-z^{-\mu_i}}.\) Thus 
\begin{equation}
	\label{eq:eulerexp}	
        \Eul_{T}(E)= \prod_i (1-z^{-\mu_i})^{r_i}  (1 + N_i),
\end{equation}
where 
\begin{equation}\label{eq:sumnilpotents}
  N_i =\sum N_{k,i} s_k(z) \in K(Y)\otimes\Z[[z,z^{-1}]].
\end{equation}
The equation \eqref{eq:sumnilpotents} is a finite sum where
\(N_{k,i}\)
is a nilpotent element in $K(Y)$ and \(s_k(z)\)
is a rational function in \(z\)
that has no pole at \(z=0\)
nor \(z=\infty\).
Thus $\Eul_T(E)^{-1}$ has only poles at roots of unity.  In particular
the Euler classes can formally be inverted, since the leading term is
invertible.

\begin{example}\label{eg:eulerl}   We give the following example of
  formal power series associated to inverted Euler classes.  
  Let \(L \to Y\) be a \(T\)-equivariant line bundle
of weight \(\mu=\pm 1\).  Then 
\[  \Eul_{T} (L) =
1-z^{\mp 1}L^{\vee} \in K_T(Y).  \]  
This expression can be expanded, using the nilpotent element
\(L_0=L^{\vee}-1\), as
\[
  1-z^{\mp 1}L^{\vee} = 1-z^{\mp 1}(1+L_0)=
  (1-z^{ \mp 1})\left(1+\frac{z^{\mp 1}L_0}{1-z^{\mp
  1}}\right). 
\]
This ends the example.
\end{example}

The residue of a K-theoretic class is a difference between the
residues of the characters at zero and infinity.  For any class
\(\alpha\in K^{\loc}_T(Y)\) there exist unique expansions
\[
\alpha = \sum_{n \geq 0} \alpha_{0,n} z^n\in K(Y)[z^{-1},z]], \ \
\alpha_{0,n}\in K(Y)
\]
and
\[
  \alpha = \sum_{n\geq0} \alpha_{\infty,n} z^{-n}\in
  K(Y)[[z^{-1},z], \ \ \alpha_{\infty,n}\in
  K(Y).
\]
The \emph{residue} is the map
\begin{equation} \label{residuedef}
        \Resid: K^{\loc}_T(Y) \to K(Y); \ \alpha\mapsto
        \alpha_{\infty,0}- \alpha_{0,0}
	\end{equation}
	assigning the difference of the constant coefficients in the 
	power series expansions above.

        \begin{lemma}\label{rk:residue}
  Let \(\alpha\in K^{\loc}_T(Y)\) be a class in the
  image of the inclusion \(K_T(Y)\to K^{\loc}_T(Y)\).
  Then \(\Resid
  \alpha=0\).
\end{lemma}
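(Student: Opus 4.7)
\textbf{Plan for Lemma \ref{rk:residue}.} The plan is to observe that any class coming from the unlocalized ring $K_T(Y)$ is a \emph{Laurent polynomial} in $z$ with coefficients in $K(Y)$, so its expansions around $z=0$ and $z=\infty$ coincide as finite sums, and hence the difference of their constant coefficients — which is the residue \eqref{residuedef} — is automatically zero.

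In detail, I would first use the K\"unneth-type identification
\[
K_T(Y) = K(Y) \otimes_{\Z} K_T(\pt) = K(Y) \otimes_{\Z} \Z[z,z^{-1}]
\]
(valid because $T$ acts trivially on $Y$) to write an arbitrary class $\alpha$ in the image of the inclusion $K_T(Y) \hookrightarrow K^{\loc}_T(Y)$ as a finite Laurent polynomial $\alpha = \sum_{n=-N}^{M} a_n z^n$ with coefficients $a_n \in K(Y)$. Such a $\alpha$ lies simultaneously in both of the power-series rings $K(Y)[[z]][z^{-1}]$ and $K(Y)[[z^{-1}]][z]$ used in the definitions of $\alpha_{0,n}$ and $\alpha_{\infty,n}$, and in each of these rings the expansion of $\alpha$ is again the same finite Laurent polynomial. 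Consequently $\alpha_{0,0} = a_0 = \alpha_{\infty,0}$, whence
\[
\Resid \alpha \;=\; \alpha_{\infty,0} - \alpha_{0,0} \;=\; 0.
\]

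If one prefers to argue more invariantly, the residue is a $K(Y)$-linear map, so it suffices to check the vanishing on a monomial basis $\{z^n\}_{n \in \Z}$: for $n \neq 0$ both expansions have vanishing constant coefficient (the lone nonzero coefficient sits in degree $n$), while for $n = 0$ both constant coefficients are equal and the difference is zero. There is no genuine obstacle in the argument; the content of the lemma is simply that the residue is sensitive only to the denominators introduced by inverting Euler classes in the passage from $K_T(Y)$ to $K^{\loc}_T(Y)$, and these denominators are absent on the sub-ring $K_T(Y) \subset K^{\loc}_T(Y)$. This vanishing is precisely what will later allow the residue to isolate fixed-point contributions in Atiyah--Segal localization.
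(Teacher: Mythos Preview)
Your proposal is correct and follows essentially the same approach as the paper: since $T$ acts trivially on $Y$, a class in the image of $K_T(Y)$ is a finite Laurent polynomial in $z$, so its two power-series expansions coincide and in particular have the same constant coefficient, giving $\Resid\alpha=0$. The paper's proof says exactly this in one line.
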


\begin{proof} In this case the coefficients
  $\alpha_{0,i} = \alpha_{\infty,i}$ are the coefficients of $z^i$ in
  $\alpha$, hence in particular $\alpha_{0,0} =
  \alpha_{\infty,0}$.  \end{proof}

\begin{example} \label{firstex} Let $Y$ be a point and $E =
  \C_{(1)}$
  the representation with weight one.  The inverted Euler class of $E$
  is defined formally by the associated series
\[
 \Eul(\C_{(1)})^{-1}=(1-z^{-1})^{-1}
  =1 + z^{-1} +
  z^{-2}+\dots \] 
\[ 
 \Eul(\C_{(1)})^{-1}=(1-z^{-1})^{-1}=  \frac{-1}{z^{-1}(1-z)}=-z- z^2 - z^{3} +\dots
.\]
It follows that the residue of the inverted Euler class is 
\[
\Resid (\Eul (\C_{(1)})) = 1-0 =1 \in K(\pt) \cong \Z.
\]
Similarly 
\[
\Resid (\Eul (\C_{(-1)})) = 0-1 =-1 \in K(\pt) \cong \Z.
\]
More generally for any line bundle \(L\) of weight \(\pm
1\), by Example  \ref{eg:eulerl} we have 
\begin{equation}\label{eq:line1}
  \Resid \Eul(L)^{-1} = \Resid
  \left(\frac{1}{1-L^{\vee}}\right)=\pm 1.
\end{equation}
\end{example}

\begin{example}\label{ex:0res}
  Suppose that \(T\)
  acts trivially on \(Y\)
  and that \(E\to Y\)
  is a bundle with isotypical decomposition as in
  \eqref{eq:isotypical1}. Let
        \[ S_+,S_- \subset \{ 1, \ldots, k \} \]
denote the index sets for positive and negative weights \(\mu_i\)
respectively.  Using \eqref{eq:eulerexp}, we have
	\[
	  \Eul_{T}(E)^{-1}= \left(\prod_{i\in S_+}
	  (1-z^{-\mu_i})^{r_i}
	  (1+E_{+,i}) \prod_{i\in S_-} (1-z^{-\mu_i})^{r_i}
	  (1+E_{-,i})\right)^{-1}
	\]
	where both \(E_\pm \in K_T^{\on{loc}} (X)\)
        are as in Equation \eqref{eq:sumnilpotents}.
	To compute the residue, we may ignore the \(N_i\)
        terms. In this case the lowest order \label{lowestorder} terms
        in the power series expansions in both \(z\)
        and \(z^{-1}\)
        have degree given by \(\prod_{i\in S_+} \mu_i r_i \)
        and \(\prod_{i\in S_-} (-\mu_i) r_i\)
        respectively. If both \(S_\pm\)
        are non-empty, then \(\Resid \Eul_{T}(E)^{-1} =0\).
	\end{example}

In general one can make different choices of residues, and
the choice depends on the problem at hand, see \cite[Section
4]{metzler} for other examples.

\subsection{The Kalkman-Metzler formula}

The formulas of Kalkman and Metzler describe the wall-crossing of
K-theoretic integrals under variation of geometric invariant theory
quotient.  We follow the same notations as in Section
\ref{ss:masterspace}.  Consider family of polarisations \(\LL_t\to X\)
with a simple wall crossing at the singular time \(t=0\)
with an associated master space \(\tilde{X}\).
Let
\[ X^{\zeta,0} = X^{\ss}(\LL_0) \cap X^{\C^\times} \]
be the fixed point component of the one-parameter subgroup generated
by $\zeta$ involved in the wall-crossing.

\begin{definition} For an element $\zeta \in \g$, define the
  \emph{fixed point contribution}
\begin{multline}
\tau_{X,\zeta,0}: K_G^0(X) \to \Z, \\
\alpha \mapsto \chi\left(X^{\zeta,0} \qu (G_\zeta/\C^\times_\zeta);\
  \Resid \left(\frac{\alpha_0}{
      \Eul_{\C^\times_\zeta}(\nu_{X^{\zeta,0}})}\right) \right ).
\end{multline}
Here \(\alpha_0\)
is the image of \(\alpha\)
under the map \(K_G^0(X) \to K^0_{\C^\times_\zeta} (X^{\zeta,0})\).
\end{definition}

A version of the following formula was proved by Metzler \cite[Theorem
1.1]{metzler} in the differential geometric setting of manifolds with
circle actions.

\begin{theorem} [Kalkman-Metzler wall-crossing]
  \label{kalk2} Let
  \(X\) be a smooth \(G\)-variety (projective or affine) and
  suppose that  \(\LL_\pm \to X\) are 
  polarizations inducing a simple wall-crossing. 
  Then
\begin{equation} \label{kalkman} \tau_{X \qu_+ G}
  \kappa_{X,+}^G - \tau_{X \qu_- G} \kappa_{X,-}^G =
  \tau_{X,\zeta,0}.
\end{equation}
\end{theorem}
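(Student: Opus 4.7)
The plan is to apply the Atiyah-Segal localization formula \eqref{eq:loc} to the master space $\tilde X$ equipped with its natural $\C^\times$-action, evaluated on classes in the image of the map $\delta \colon K_G^0(X) \to K_{\C^\times}(\tilde X)$ from \eqref{eq:deltamap}, and then take $\Resid$. For each $\alpha \in K_G^0(X)$, localization yields
\[ \chi(\tilde X;\, \delta(\alpha)) \;=\; \sum_{F \subset \tilde X^{\C^\times}} \chi\!\bigl(F;\, \delta(\alpha)|_F \otimes \Eul_{\C^\times}(\nu_F^\vee)^{-1}\bigr) \ \in\ K^{\loc}_{\C^\times}(\pt). \]
Since $\tilde X$ is a proper Deligne--Mumford stack, the left-hand side in fact lies in the unlocalized equivariant ring $K_{\C^\times}(\pt) = \Z[z,z^{-1}]$, and so Lemma \ref{rk:residue} forces its residue to vanish.

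Next I would sort the fixed-point contributions using Section \ref{ss:masterspace}. The fixed locus $\tilde X^{\C^\times}$ is the disjoint union of the two distinguished components $X \qu_+ G$ and $X \qu_- G$ together with the middle components $X^{\zeta,0} \qu (G_\zeta/\C^\times_\zeta)$ indexed by one-parameter subgroups $\zeta$ stabilising points in the strictly semistable locus. By the defining property of $\delta$ recorded just after \eqref{eq:deltamap}, we have $\delta(\alpha)|_{X \qu_\pm G} = \kappa_{X,\pm}^G(\alpha)$, and by the argument at the end of Section \ref{ss:masterspace} the restriction of $\delta(\alpha)$ to a middle component is precisely the class $\alpha_0$ appearing in the definition of $\tau_{X,\zeta,0}$. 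The normal bundle at $X \qu_\pm G$ is a line bundle of $\C^\times$-weight $\pm 1$, while the normal bundle of each middle component agrees with $\nu_{X^{\zeta,0}}$ from Definition \ref{def:fp}.

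Applying $\Resid$ term by term, Example \ref{firstex} and equation \eqref{eq:line1} produce a contribution $+\tau_{X \qu_+ G}\kappa_{X,+}^G(\alpha)$ from $X \qu_+ G$ (the weight-$+1$ normal line giving $\Resid(1 - L^\vee)^{-1} = +1$), a contribution $-\tau_{X \qu_- G}\kappa_{X,-}^G(\alpha)$ from $X \qu_- G$ (weight $-1$), and a contribution whose sign is built into the definition of $\tau_{X,\zeta,0}$ from each middle component. Setting the total sum equal to zero and rearranging then produces the identity \eqref{kalkman}.

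The main obstacle is the bookkeeping of $\C^\times$-weights and residue signs at each fixed component. In particular, for the middle contributions one must verify that the normal bundle of $\iota_\zeta$ as described in Section \ref{ss:masterspace} (the quotient of $\nu_{X^\zeta}/(\g/\g_\zeta)$ by $G_\zeta/\C^\times_\zeta$, together with its induced $\C^\times_\zeta$-action) matches the bundle $\nu_{X^{\zeta,0}}$ entering Definition \ref{def:fp}, and that the residue computed via the expansion \eqref{eq:eulerexp} reproduces the integrand in the definition of $\tau_{X,\zeta,0}$. Once these geometric identifications are in place, the remainder of the argument is a direct application of Atiyah--Segal and Lemma \ref{rk:residue}.
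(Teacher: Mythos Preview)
Your approach is essentially identical to the paper's: apply Atiyah--Segal localization on the master space $\tilde X$ to the class $\delta(\alpha)$, use Lemma~\ref{rk:residue} to kill the residue of the global Euler characteristic, and read off the contributions from the three types of fixed components using the restrictions of $\delta(\alpha)$ described in Section~\ref{ss:masterspace}.

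One sign slip to correct: with the paper's convention $w\cdot[l_-,l_+]=[l_-,wl_+]$, the component $X\qu_+ G$ is the quotient of $\P(\LL_+)$ (the locus $l_-=0$), so its normal line is $\LL_-\otimes\LL_+^{-1}$ with $\C^\times$-weight $-1$, and likewise $\nu_-$ has weight $+1$. Thus the weights on $\nu_\pm$ are $\mp 1$, not $\pm 1$. With the corrected weights, \eqref{eq:line1} gives residues $\mp 1$ at $X\qu_\pm G$, so the vanishing of the total residue reads
\[
0 = -\,\tau_{X\qu_+ G}\,\kappa_{X,+}^G(\alpha) + \tau_{X\qu_- G}\,\kappa_{X,-}^G(\alpha) + \tau_{X,\zeta,0}(\alpha),
\]
which rearranges to \eqref{kalkman}. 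Your version of the signs would produce the formula with the wrong sign on the right-hand side.
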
  

\begin{example} Our first example concerns the passage from projective
  space to the empty set by variation of git quotient.  Consider
  \(G=\Ct\)
  acting on \(X=\C^{n+1}\)
  diagonally with weight one. A polarisation is given by a choice of
  another character \(\ell\in \Z\).
  For \(\ell<0\)
  the quotient is empty and for \(\ell>0\),
  \(X\qu G=\P^n\).
  Let \(\C_{(k)}\)
  be the weight \(k\geq 0\)
  representation. The sheaf \(\cF=\cO_X\otimes \C_{(k)}\)
  descends to \(\cO(k)\to \P^n\).
  The fixed point set \(X^G\)
  is the single point \(0\in X\)
  whose normal bundle is identified with \(\C^{n+1}\)
  itself. The inverted Euler class is \( (1- z^{-1})^{-(n+1)}\)
  and can be identified with the class of the symmetric algebra
  \(\Sym (z^{-1} \C^{n+1} )\).
  The fixed point contribution of the wall crossing formula
  \eqref{kalkman} equals
\[ \cO(k)|_0 \otimes \Sym (z^{-1} \C^{n+1}) = z^k \Sym
  (z^{-1} \C^{n+1}) .\] 
  The residue, corresponding to the \(z^0\)
  coefficient is the degree \(k\)
  part \(\Sym^k(\C^{n+1})\).  The wall crossing formula now reads
  \[
	\chi(\P^n,\cO(k))= \chi( X^G, \Sym^k (\C^{n+1})) = \dim
	\Sym^k(\C^{n+1}) = {n+k \choose k}. 
  \]
  Similarly, if one considers \(\cO(-k)\) and the identity
  \[
	\Sym V  =(-1)^{n+1} \det V^{\vee} \otimes
	\Sym V^{\vee},  
  \]
  we get that the wall crossing contribution is 
  \[
	\Resid z^{-k} z^{n+1} \Sym (\C^{n+1}) 
  .\]
  This is the degree \(k-n - 1\)
  part \(\Sym^{k-n-1}(\C^{n+1})\), \label{degreepart} hence
  \[
  \chi(\P^n,\cO(-k))= (-1)^{n} \chi( X^G, \Sym^{k-n-1} (\C^{n+1})) =
  (-1)^n {k-1 \choose k-n-1}.
  \]
\end{example}

\begin{example} \label{cremona}  Our second example concerns the Cremona
  transformation.  Let \(X = (\P^1)^3\) with polarization
\[
  \LL = \mO_{\P^1}(c_1) \boxtimes \mO_{\P^1}(c_2) \boxtimes
  \mO_{\P^1}(c_3),
\quad c_1 < c_2 < c_3.
\] 
Consider \(G = \C^\times\) acting on each factor \(\P^1\) by
\( t[z_0,z_1] = [z_0,t z_1] \)  and the action on
\(\mO_{\P^1}(n)\) so that the weights at the fibres over the
fixed points \([1,0]\), \([0,1]\) are \(n/2,-n/2\). \label{signs} We
consider the family of polarizations \(\LL_t = \LL \otimes
\C_t\) obtained by shifting \(\LL\) by a trivial line bundle
weight weight \(t\). 
The singular values \(t\) are given by the weights \((\pm c_1
\pm c_2 \pm c_3)/2\) of the action on the fibre \(\LL_t|_p\) at
the fixed points. 
Hence, there are eight chambers for \(t\) on which the geometric invariant theory 
quotient \(X\qu_t G\) is constant. Two have empty git
quotients.  In the first and last chamber, we have \(X \qu_t
G \cong \P(\C^3)\) resp. \(\P((\C^3)^\dual)\), while the six
intermediate wall-crossings induce three blow-ups and three
blow-downs involved in the classic Cremona transformation of
\(\P^2\). 

We consider how the Euler characteristic of the structure sheaf
changes under wall-crossing.  Consider the structure sheaf \(\mO_X\),
whose restriction to any of the chambers is again the respective
structure sheaf \(\mO_t\).
We now describe the change in the Euler characteristic
\(\chi(X\qu_t G,\mO_t)\)
as we vary \(t\)
under the wall-crossing from the chamber \(t < -c_1 - c_2 - c_3\)
to the first non-empty chamber
\(t \in (c_1 - c_2 - c_3,-c_1 + c_2 - c_3)\),
corresponding to the wall defined by the fixed point
\(x = ([1,0],[1,0],[1,0]) \in X\).
Since all weights of the action on the tangent bundle at this fixed
point \(x \in X\)
are \(1\),
we have that the Euler class at this fixed point is \((1-z^{-1})^3\).
The wall crossing formula yields
\[
  \chi(\P^2, \mO) = 0 + \chi\left(x, \Resid
  \frac{\mO|_x}{(1-z^{-1})^{3}} \right) = 1
\]
as expected.

The next wall-crossing, corresponds to the blow-up
\(\Bl(\P^2)\) of \(\P^2\) at a fixed point. The normal
bundle at this point has weights \(-1,1,1\) and thus the
wall crossing formula yields
\begin{eqnarray*}
  \chi(\Bl(\P^2), \mO) &=& \chi(\P^2,\mO) + \chi\left(x, \Resid
  \frac{\mO|_x}{(1-z)(1-z^{-1})^{2}} \right) \\ &=&
  \chi(\P^2,\mO)+0=1
\end{eqnarray*}
since the residue is zero each time there are contributions with both
positive and negative weights (since the numerator is trivial) which
is consistent, since the Euler characteristic \(\chi(X,\mO_X)\)
is a birational invariant.  In fact the invariance of the Euler
characteristic \(\chi(X \qu G,\cO_{X \qu G})\)
under git birational transformation follows from Example
\ref{ex:0res}.
\end{example}

\begin{proof}[Proof of Theorem \ref{kalk2}]
  Consider the master space \(\ti{X}\) associated to the
  wall-crossing. Let \(\alpha\in K_G^0(X)\), and consider its
  image 
  \(\delta(\alpha)\in K^0_{\Ct}(\ti{X})\) of
  \eqref{eq:deltamap}. Using
  \(K\)-theoretic localization \eqref{eq:loc} 
  on the \(\Ct\)-space \(\ti{X}\),
  and the identification of the fixed point
  components and normal bundles with those in the ambient
  space we obtain the relation 
  \begin{equation*}\label{eq:aux0}
	\delta(\alpha) = \iota_{*} 
	\frac{\kappa_{X,-}^G(\alpha)}{\Eul_{\Ct}
  (\nu_-)}
  + \iota_{*} 
\frac{\kappa_{X,+}^G(\alpha)}{\Eul_{\Ct}(\nu_+)} +
\iota_{*} \frac{\alpha_0}{\Eul_{\Ct_{\zeta}}(\nu_{X^{\zeta,0}})},
\end{equation*}
where \(\iota:F\to \ti{X}^{\Ct}\) is the inclusion 
of (each) fixed point components. By applying residues and
Euler characteristics
  \begin{multline}\label{eq:aux1}
	\chi\left(\ti{X}; \Resid \delta(\alpha)\right) =
	\chi\left(X \qu_- G; 
	\Resid \frac{\kappa_{X,-}^G(\alpha)}{\Eul_{\Ct}
  (\nu_-)}\right)\\ 
  + \chi\left(X
\qu_+ G; \Resid
\frac{\kappa_{X,+}^G(\alpha)}{\Eul_{\Ct}(\nu_+)}\right)
+\chi\left(X^{\zeta,0} ;\Resid
\frac{\alpha_0}{\Eul_{\Ct}(\nu_{X^{\zeta,0}})}\right).
\end{multline}

  The left hand side of \eqref{eq:aux1} is
  zero by the definition of residue (c.f. Remark
  \ref{rk:residue}) . 
%
%
  %
  The group \(\C^\times\)
  acts on \(\nu_\pm \)
  with weights \(\mp 1\)
  on the normal bundles \(\nu_\pm \to X \qu_\pm G\)
  of \(X \qu_\pm G\)
  in \(\ti{X}\)
  respectively, since they are canonically identified with
  \((\LL_+\otimes\LL_-)^{\pm 1}\).  Hence one obtains
  \begin{multline*}
	0 = - \tau_{X \qu_+ G} \kappa_{X,+}^G \alpha +
  \tau_{X \qu_- G} \kappa_{X,-}^G \alpha +
  \chi \left(
  X^{\zeta,0};\ \Resid \frac{\alpha_0}{
	\Eul_{\Ct_\zeta}(\nu_{X^{\zeta,0}})}\right)
\end{multline*}
  as claimed.
\end{proof}

We remark that Kalkman-Metzler wall-crossing formula Theorem
\ref{kalk2} can be generalised to more complicated wall-crossing, such
as when there are multiple singular times and fixed components.  The
details are left to the reader.

\subsection{The virtual wall-crossing formula} 

A virtual version of the Kalkman-Metzler formula follows from a
virtual version of localization in K-theory proved by Halpern-Leistner
\cite[Theorem 5.7]{hal:strat}.  Let \(\XX\)
be a Deligne-Mumford \(G\)-stack
with coarse moduli space \(X\).
Let \(\LL_\pm \to \XX\)
be \(G\)-polarizations
of \(\XX\).
That is, $\LL_\pm$ are \(G\)-line
bundles which are ample on the coarse moduli space \(X\).  Let
\[ \Pic_\Q(\XX) = \Pic(\XX) \otimes_\Z \Q \]  
denote the rational Picard group and
\begin{equation}
  \label{eq:tpolarv}
  \LL_t := \LL_-^{(1-t)/2}\otimes
  \LL_+^{(1+t)/2} \in \Pic_\Q(\XX), \quad t \in (-1,1)\cap \Q.
\end{equation}
The family \eqref{eq:tpolarv} is a one-parameter family of rational
polarizations given by interpolation.  We will also assume, for
simplicity that there is a simple wall-crossing:

\begin{assumption} \label{simple} There is only one \emph{singular
    value} \(t=0\),
  such that the semistable loci 
\[ \XX^{\ss}(+):=\XX^{\ss}(\LL_t), \quad
\XX^{\ss}(-):=\XX^{\ss}(\LL_t) \] 
are constant for \(\pm 1\leq t < 0\).
We assume that stable equals semi-stable for \(t\neq 0\).
Moreover
\(\XX^{\ss}(\LL_0) \setminus (\XX^{\ss}(+)\cup \XX^{\ss}(-)) \)
is connected. Assume that the stabiliser \(G_x\)
for
\(x\in \XX^{\ss}(\LL_0) \setminus (\XX^{\ss}(+)\cup \XX^{\ss}(-)) \)
is isomorphic to \(\Ct\).
\end{assumption} 

Consider the case that the given stack has an equivariant perfect
obstruction theory.  The \(G\)-action
on \(\XX\) induces a canonical morphism
\[a^\dual:L_{\XX} \to \ul{\g}^\dual \subset L_{\XX \times
G}=L_{\XX} \times \ul{\g}^\dual \]
that we call the {\em infinitesimal action}.   Composing
with the morphism \(E\to L_\XX\) gives a natural lift
\(\ti{a}^\dual: E \to \ul{\g}^\dual\). 
Let \(\XX^{\ss}\) be the semistable locus for some
polarization and assume stable=semistable, that is all the
stabilisers are finite. Let \(E^{\ss},
L_{\XX^{\ss}}\) etc. be the restrictions to the semistable
locus. 

\begin{lemma} \label{qot}
  The perfect obstruction theory \(E^{\ss} \to L_{\XX^{\ss}} :=
  L_{\XX}|_{\XX^{\ss}} \) descends to a perfect obstruction
  theory on the quotient \(\XX^{\ss} / G\).
\end{lemma}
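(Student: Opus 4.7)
The plan is to descend the perfect obstruction theory along the smooth $G$-torsor $\pi: \XX^{\ss} \to \XX^{\ss}/G$ by forming the fiber over the canonical lift of the infinitesimal action. Under the stable $=$ semistable hypothesis the morphism $\pi$ is representable and faithfully flat, and the cotangent triangle
$$\pi^* L_{\XX^{\ss}/G} \to L_{\XX^{\ss}} \xrightarrow{a^\dual} \ul{\g}^\dual \to \pi^* L_{\XX^{\ss}/G}[1]$$
identifies the third term (in degree $0$) with the trivial bundle on the Lie algebra, the connecting map being the infinitesimal action $a^\dual$ of the statement.

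Next I would define $E' := \on{fib}(\tilde{a}^\dual: E \to \ul{\g}^\dual)$ and apply the octahedral axiom to the composition $E \to L_{\XX^{\ss}} \to \ul{\g}^\dual$ to produce a canonical morphism $\phi': E' \to \pi^* L_{\XX^{\ss}/G}$ such that the triangle $E' \to E \to \ul{\g}^\dual$ aligns with the cotangent triangle above. Local freeness of the action forces $\ul{\g}\otimes \mO_{\XX^{\ss}} \hookrightarrow T_{\XX^{\ss}}$ to be a subbundle, so $h^0(a^\dual): \Omega^1_{\XX^{\ss}} \twoheadrightarrow \ul{\g}^\dual$ is surjective, which combined with $h^0(E) \iso \Omega^1_{\XX^{\ss}}$ lets us represent $E'$ by a two-term complex of vector bundles $[E'^{-1} \to E'^0]$ in degrees $[-1,0]$. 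The long exact sequences of the two triangles then translate the POT conditions on $\phi: E \to L_{\XX^{\ss}}$ directly into the required conditions that $h^0(\phi')$ is an isomorphism and $h^{-1}(\phi')$ is an epimorphism.

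Finally, since the entire construction is canonical in the $G$-equivariant derived category, both $E'$ and $\phi'$ are naturally $G$-equivariant and descend uniquely along the $G$-torsor $\pi$ to a two-term perfect complex $\ol{E}$ on $\XX^{\ss}/G$ with a morphism $\ol{\phi}: \ol{E} \to L_{\XX^{\ss}/G}$; smoothness and faithful flatness of $\pi$ preserve the $h^0$-iso and $h^{-1}$-epi conditions upon descent. The main technical obstacle is obtaining the surjection $E^0 \twoheadrightarrow \ul{\g}^\dual$ at the level of chain complexes (rather than only on $h^0$) so that $E'$ is literally a two-term complex of vector bundles; this is a standard move, accomplished by replacing $E$ with the $G$-equivariantly quasi-isomorphic complex obtained by adding an acyclic summand $[\ul{\g}^\dual \xrightarrow{\on{id}} \ul{\g}^\dual]$ in degrees $[-1,0]$, using reductivity of $G$ to split the resulting equivariant extension.
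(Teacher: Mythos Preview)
Your proposal is correct and follows essentially the same strategy as the paper: form the fiber of the lifted infinitesimal action $\tilde{a}^\dual: E^{\ss} \to \ul{\g}^\dual$, compare its defining triangle with the cotangent triangle for $\pi$, and invoke local freeness of the action to conclude the result is perfect in amplitude $[-1,0]$. The paper's version is terser---it simply notes the mapping-cone triangle maps to the cotangent triangle, giving an obstruction theory of amplitude $[-1,1]$, and then says ``by the assumption on the finite stabilizers, this obstruction theory is perfect''---whereas you spell out the octahedral step, the $h^0$/$h^{-1}$ verification, and the descent along the torsor explicitly.

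One small comment: your final ``technical obstacle'' is not actually an obstacle. Once you represent $\tilde{a}^\dual$ by a genuine chain map $E^0 \to \ul{\g}^\dual$ (automatic since $\ul{\g}^\dual$ sits in degree $0$ and $E^\bullet$ is a complex of locally frees), surjectivity on $h^0$ already forces $E^0 \twoheadrightarrow \ul{\g}^\dual$ at the sheaf level, since $E^0$ surjects onto $h^0(E)$. So the acyclic-summand trick is unnecessary here.
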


\begin{proof}  
 From the fibration \(\pi:\XX^{\ss} \to \XX^{\ss}/G\) one obtains
 an exact triangle of cotangent complexes
 \begin{equation} \label{etg} 
   L_{\XX^{\ss}/ G} \to
   L_{\XX^{\ss}} \stackrel{a^\dual}{\to}  
   \ul{\g}^\dual\to  L_{\XX^{\ss}/G}[1],
 \end{equation}
thus we can consider \(L_{\XX^{\ss}} \to   \ul{\g}^\dual\)
as the cotangent complex of \(\XX^{\ss}/G\).  
Let \(\Cone(\ti{a}^\dual)\) denote the mapping cone of \(\ti{a}^\dual\).  Then the
exact triangle
\[ \Cone(\ti{a}^\dual) \to 
E^{\ss} \stackrel{\ti{a}^\dual}{\to} \ul{\g}^\dual \to
\Cone(\ti{a}^\dual)[1]\]
admits a morphism to \eqref{etg}, in particular making
\(\Cone(\ti{a}^\dual) \to L_{\XX^{\ss}/G}\) into an
obstruction theory with amplitude in \([-1,1]\).  By the
assumption on the finite stabilizers, this 
obstruction theory is perfect.
\end{proof}

The perfect obstruction theories on the stack induce perfect
obstruction theories on the fixed point components.  Assume again that
\(\XX\)
is equipped with a \(G\)
action. For any \(\zeta \in \g\),
consider the fixed point stacks \(\XX^{\zeta}\).
The restriction of the perfect obstruction theory
\(E^\bullet|_{\XX^{\zeta}}\) decomposes as
\[
  E^\bullet|_{\XX^{\zeta}}= E^{\bullet, \mov} + E^{\bullet,
  \fix}
\]
where $E^{\bullet, \mov} $ is the {\em moving part} and
$E^{\bullet, \fix}$ the {\em fixed part}.  By results of
\cite{gr:loc}, \(E^{\bullet, \fix}\)
yields an equivariant perfect obstruction theory for \( \XX^\zeta\),
which is compatible with that on \(\XX\).  Denote by
\[ a_\zeta^\dual: L_{\XX/G_\zeta} \to
(\g_\zeta/\C\zeta)^\dual \]  
the map given by the infinitesimal action of
\(G_\zeta/\C^\times_\zeta\)
on \(\XX/G_\zeta\).  Denote by 
\[ \nu_{\XX^\zeta} \in \Ob( D^b\Coh(\XX^\zeta/G_\zeta)) \]
the \emph{moving part} of the mapping cone \(\Cone(a_\zeta)\);
this is the conormal complex for the embedding
\(\XX^\zeta/G_\zeta \to \XX/G_\zeta\)
except for the factor \(\C \zeta\)
of automorphisms of the fixed point set.  Denote by \(\XX^{\zeta,0}\)
the locus of \(\XX^\zeta\)
which is \(\LL_0\)-semistable
and by \(\nu_{\XX^{\zeta,0}}\)
the restriction of \(\nu_{\XX^{\zeta}}\) to \(\XX^{\zeta,0}\).

A virtual version of localization in K-theory for stacks is proved in
Halpern-Leistner \cite[Theorem 5.7]{hal:strat}. 
 A special case is the
following: Assume that a torus \(T\)
acts on \(\XX\)
(e.g. the action of \(\Ct\)
on a master space for wall-crossing) and let \(\XX^T\)
denote the fixed locus. The virtual tangent space restricted to
\(\XX^T\) decomposes into
\[
  \on{Def}-\on{Obs}|_{\XX^T} =   \on{Def}^\fix
  -\on{Obs}^\fix+
  \on{Def}^\mov -\on{Obs}^\mov
\]
its fixed and moving parts consisting of \(T\)-modules
with trivial and non-trivial weights.  The K-class
\(\nu_{\XX^{T}}=(\on{Def}^\mov -\on{Obs}^\mov)\)
is the virtual \emph{normal bundle}.  In the case $T = \C^\times$ this
class splits into classes $\nu_{\XX^T,\pm}$ corresponding to negative
and positive weights and the inverted Euler class of $\nu_{\XX^T}$
maybe defined as the tensor product
\[ \Eul(\nu_{\XX^T})^{-1} = 
\Eul(\nu_{\XX^T,+})^{-1} 
\Eul(\nu_{\XX^T,-})^{-1} \] 
where as in \cite[Footnote 24]{hal:strat} \label{footnote}
\begin{eqnarray*}   \Eul(\nu_{\XX^T,-})^{-1} &=& \Sym(  \nu_{\XX^T,-} ) \\
\Eul(\nu_{\XX^T,+})^{-1} &=& \Sym(\nu_{\XX^T,+}^\dual) \otimes
\det(\nu_{\XX^T,+}) [-\rank(\nu_{\XX^T,+})]  \end{eqnarray*}
is an infinite sum of bundles such that each weight component is
finite; this suffices for the finiteness of the formulas below.  The
residue of such classes is defined as before in \eqref{residuedef}, by
taking the difference in expansion as formal power series in $z$ and
$z^{-1}$.

\begin{theorem} {\rm (Virtual localisation)} \cite[5.6,5.7]{hal:strat}
  Suppose that $\XX$ is the a proper global quotient of a
  quasiprojective scheme by a reductive group equipped with an
  equivariant $T = \C^\times$-action, and $E$ is a $T$-equivariant
  sheaf on $\XX$.  Then the Euler characteristic of $E$ is computed by
\begin{eqnarray*}\label{eq:vloc}
  \chi^{\vir}(\XX,E)  &=& \chi^{\vir}(\XX^T,E \otimes 
  \Eul_T (\nu_{\XX_{T}})^{-1}) \\
  &=& \sum_{F \subset \XX^T} \chi^{\vir}(F, E \otimes
  \Eul_T (\nu_{F})^{-1} ) \end{eqnarray*}
for the inclusion \(\iota:\XX^T \to \XX\).
Here the sum runs over all fixed components $F$ of $\XX^T$ and the
Euler class \(\Eul_T(\nu_{\XX_{T}})\)
are defined as in \ref{ss:euler}, and it is invertible in localised
equivariant K-theory. 
\end{theorem}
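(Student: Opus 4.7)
The plan is to follow the standard Graber–Pandharipande virtual localization strategy, adapted to K-theory along the lines of Y.-P. Lee \cite{lee:qk1} and the more general derived-categorical framework of Halpern-Leistner \cite{hal:strat}. The key input is the existence of a two-term $\C^\times$-equivariant global resolution $[E^{-1}\to E^0]$ of the perfect obstruction theory, which is guaranteed by the hypothesis that $\XX$ is a global quotient of a quasi-projective scheme by a reductive group. On the fixed locus this resolution splits canonically into fixed and moving parts $E^\bullet|_{\XX^T} = E^{\bullet,\fix}\oplus E^{\bullet,\mov}$, and by the results of \cite{gr:loc} recalled just before the statement, $E^{\bullet,\fix}$ defines a perfect obstruction theory on $\XX^T$.

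First I would apply the ordinary K-theoretic Atiyah–Segal formula \eqref{eq:loc} inside the ambient smooth global quotient stack into which $\XX$ embeds. Then I would use the description of the virtual structure sheaf as the derived tensor product $\cO^{\vir}_\XX = \cO_\XX \otimes^L_{\cO_{(E^{-1})^\vee}} \cO_C$, where $C\subset (E^{-1})^\vee$ is the virtual normal cone, and restrict this construction to $\XX^T$. A diagram chase identifies the restriction as the virtual structure sheaf of $\XX^T$ built from $E^{\bullet,\fix}$, tensored with a factor accounting for the moving directions, yielding
\[
[\cO^{\vir}_{\XX}]\big|_{\XX^T} \;=\; [\cO^{\vir}_{\XX^T}] \otimes \Eul_T(\nu_{\XX^T})^{-1},
\]
with $\nu_{\XX^T} = \on{Def}^{\mov}-\on{Obs}^{\mov}$. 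Pushing forward under $\iota\colon \XX^T\to\XX$ and taking $T$-equivariant Euler characteristics, using Tonita's virtual Riemann–Roch in the stacky setting as already invoked in Lemma~\ref{formal}, then gives both equalities in \eqref{eq:vloc}.

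The main obstacle is ensuring that $\Eul_T(\nu_{\XX^T})$ is invertible in the appropriate localization of equivariant K-theory. Using the nilpotent-correction expansion of Section \ref{ss:euler}, the Euler class factors as a product over the $T$-weight components of $\nu_{\XX^T}$, where each scalar factor $(1-z^{-\mu_i})^{r_i}$ is inverted after localization and the nilpotent corrections form a convergent geometric series. For stacks with non-trivial automorphism groups one must also invert the roots of unity appearing in Tonita's formula, as flagged in the footnote earlier in this section; then the two infinite-sum formulas
\[
\Eul_T(\nu_{\XX^T,-})^{-1} = \Sym(\nu_{\XX^T,-}), \quad
\Eul_T(\nu_{\XX^T,+})^{-1} = \Sym(\nu_{\XX^T,+}^\vee)\otimes \det(\nu_{\XX^T,+})[-\rank]
\]
have finite weight components, so the pushforward to a point is well-defined. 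Verifying that these choices are compatible with the derived-category manipulations used in the comparison of $\cO^{\vir}_\XX$ with its restriction to $\XX^T$ is the delicate step, and is precisely what \cite[Theorem 5.7]{hal:strat} accomplishes.
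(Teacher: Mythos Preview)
Your sketch is a correct outline of the classical Graber--Pandharipande/Lee approach: embed $\XX$ in a smooth ambient global quotient, apply ordinary Atiyah--Segal localisation there, then compare virtual structure sheaves via the intrinsic normal cone construction and the splitting $E^\bullet|_{\XX^T} = E^{\bullet,\fix}\oplus E^{\bullet,\mov}$. This works, and you correctly identify the delicate point (compatibility of the Euler-class inversion with the derived manipulations) as being handled by \cite[Theorem~5.7]{hal:strat}.

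The paper's proof, however, takes a quite different and shorter route. Rather than embedding in a smooth ambient and running the cone comparison, it simply observes that Halpern-Leistner's \cite[Theorem~5.7]{hal:strat} already establishes \eqref{eq:vloc} directly for quasi-smooth \emph{schemes} $Y$ via $\Theta$-stratifications. The passage to global quotient stacks $\XX = Y/G$ is then a one-line reduction: $\chi(\XX,E/G)$ is the $G$-invariant part of $\chi(Y,E)$, and the Bialynicki--Birula decomposition on $Y$ furnishes the required stratification by \cite[Section~1.4.1]{hal:on}. So instead of reproving virtual localisation from the cone picture, the paper treats it as a black-box consequence of Halpern-Leistner's structural theorem plus a descent argument. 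Your approach is more hands-on and closer in spirit to the original \cite{gr:loc,lee:qk1}; the paper's is terser and leans entirely on the $\Theta$-stratification machinery, which in particular sidesteps the need to embed $\XX$ in anything smooth.
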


\begin{proof} This is stated for quasi-smooth schemes $Y$ in
  \cite[Theorem 5.7]{hal:strat}. The statement for quasi-smooth global
  quotient stacks $\XX = Y/G$ by reductive groups $G$ follows from the
  identification of the Euler characteristic $\chi(\XX,E/G)$ on $\XX$
  with the invariant part of the Euler characteristic $\chi(Y,E)^G$
  upstairs, where the Bialynicki-Birula decomposition satisfies the
  requirements of the stratification by \cite[Section 1.4.1]{hal:on}.
\end{proof}

The formula \eqref{eq:vloc} implies a formula for Euler
characteristics as a sum over fixed point components, in which the
fixed point contributions are of the following form.  Let
\(\Eul_{\Ct_\zeta}(\nu_{\XX^{\zeta,0}})\)
be the equivariant Euler class of the normal bundle in
\(K^0_{\Ct_\zeta}(\XX^{\zeta,0})\), \label{laurent}
Let \(\tau_{\XX,\zeta,0}\)
denote the equivariant virtual Euler characteristic twisted by
\(\Eul_{\C^\times_\zeta}^{-1}(\nu_{\XX^{\zeta,0}})\)
combined with the residue
\begin{multline*}
  \tau_{\XX,\zeta,0}:
  K^0_{\Ct_\zeta}(\XX^{\zeta,0})\to
  \Z; \quad \sigma\mapsto
\chi^{\vir}_{\Ct_\zeta}\left(\XX^{\zeta,0};
\Resid \frac{\sigma}{\Eul_{\C^\times_\zeta}(\nu_{\XX^{\zeta,0}}) }
\right). 
\end{multline*}
Let \(\tau_{\XX \qu_\pm G}\) denote the virtual Euler characteristic
\[
  \tau_{\XX \qu_\pm G}: K(\XX\qu_\pm G) \to \Z, \quad \alpha
  \mapsto \chi^{\vir}\left(\XX \qu_\pm G;\alpha\right),
\]
on the quotients as before. We have the following virtual
Kalkman-Metzler wall-crossing formula:

\begin{theorem} \label{vkalkman} Let \(\XX\)
  be a proper Deligne-Mumford global-quotient \(G\)-stack
  equipped with a \(G\)-equivariant
  perfect obstruction theory which admits a global resolution by
  vector bundles. Let \(\LL_\pm \to \XX\)
  be \(G\)-line
  bundles whose associated wall-crossing is simple.  Then
\begin{equation} \tau_{\XX \qu_+ G} \ \kappa_{\XX,+}^G -
  \tau_{\XX \qu_- G} \ \kappa_{\XX,-}^G =
  \tau_{\XX,\zeta,0} 
\end{equation}
\end{theorem}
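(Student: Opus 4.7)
The strategy is to repeat the proof of the classical Kalkman-Metzler formula Theorem \ref{kalk2} in the virtual setting, substituting the virtual localization formula \eqref{eq:vloc} of Halpern-Leistner for the classical Atiyah-Segal localization \eqref{eq:loc}. First I construct the virtual master space $\ti{\XX} = \P(\LL_- \oplus \LL_+) \qu G$ exactly as in Definition \ref{def:master}. Since $\P(\LL_- \oplus \LL_+) \to \XX$ is a smooth $\P^1$-bundle, the pullback of the given $G$-equivariant perfect obstruction theory on $\XX$, augmented by the cotangent complex of the smooth $\P^1$-direction, yields a $G \times \C^\times$-equivariant perfect obstruction theory on $\P(\LL_- \oplus \LL_+)$ that admits a global resolution by vector bundles. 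By the simple wall-crossing assumption stable equals semistable on the semistable locus, so Lemma \ref{qot} applies and this perfect obstruction theory descends to a $\C^\times$-equivariant perfect obstruction theory on $\ti{\XX}$.

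Next I identify the $\C^\times$-fixed locus of $\ti{\XX}$ and its virtual normal complex. The fixed-locus decomposition parallels the non-virtual case of Section \ref{ss:masterspace}: there are the two distinguished components $\XX \qu_\pm G$, arising from the zero- and infinity-sections of $\P(\LL_- \oplus \LL_+) \to \XX$, whose virtual normal complexes are canonically identified with the line bundles $(\LL_+ \otimes \LL_-)^{\pm 1}$ of $\C^\times$-weight $\mp 1$; and the components $\XX^{\zeta,0} \qu (G_\zeta/\C^\times_\zeta)$ arising from one-parameter subgroups $\zeta$. The moving part of the restricted perfect obstruction theory on the last components, after quotienting by the $\C^\times_\zeta$-direction, recovers the virtual normal bundle $\nu_{\XX^{\zeta,0}}$ used in defining $\tau_{\XX,\zeta,0}$. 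In each case the restriction of the $\C^\times$-fixed part of the perfect obstruction theory agrees with the perfect obstruction theories used to form $\tau_{\XX \qu_\pm G}$ and $\tau_{\XX,\zeta,0}$, by functoriality of the descent in Lemma \ref{qot} applied to the subgroup $\C^\times_\zeta \subset G$.

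The residue argument now proceeds as in the proof of Theorem \ref{kalk2}. Given $\alpha \in K_G^0(\XX)$, form the pullback $\delta(\alpha) \in K_{\C^\times}(\ti{\XX})$ as in \eqref{eq:deltamap}. Applying virtual localization \eqref{eq:vloc} to $\delta(\alpha)$ and then the residue map \eqref{residuedef} yields
\[ \Resid \chi^{\vir}_{\C^\times}(\ti{\XX}, \delta(\alpha)) = \sum_{F \subset \ti{\XX}^{\C^\times}} \chi^{\vir}\!\left(F,\, \Resid \frac{\delta(\alpha)|_F}{\Eul_{\C^\times}(\nu_F)}\right). \]
The left-hand side vanishes by Lemma \ref{rk:residue}, since $\delta(\alpha) \otimes \cO^{\vir}_{\ti{\XX}}$ is genuinely in the non-localized equivariant K-theory. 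On the right, Example \ref{firstex} and \eqref{eq:line1} evaluate the residues for the two weight $\mp 1$ normal line bundles to $\mp 1$, giving $\mp \tau_{\XX \qu_\pm G}(\kappa_{\XX,\pm}^G \alpha)$ after using that $\delta(\alpha)|_{\XX \qu_\pm G} = \kappa_{\XX,\pm}^G \alpha$, while the remaining fixed components contribute exactly $\tau_{\XX,\zeta,0}(\alpha)$. Rearranging produces the claimed formula.

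The main obstacle is verifying the compatibility of the perfect obstruction theories: one must check carefully that the descended theory on $\ti{\XX}$ restricts on the two distinguished components $\XX \qu_\pm G$ to the obstruction theory induced from $\XX$ via Lemma \ref{qot} (so that the notation $\tau_{\XX \qu_\pm G}\, \kappa_{\XX,\pm}^G$ is unambiguous), and that on the $\XX^{\zeta,0}$-components the fixed part of the obstruction theory on $\ti{\XX}$ agrees with the one used to define $\tau_{\XX,\zeta,0}$. Once this compatibility is established, the remainder of the argument is the residue manipulation already used in the non-virtual proof.
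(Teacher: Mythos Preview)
Your proposal is correct and follows essentially the same route as the paper: construct the master space $\ti{\XX}=\P(\LL_-\oplus\LL_+)\qu G$ with its descended perfect obstruction theory, apply virtual localization \eqref{eq:vloc}, take residues, and use Lemma \ref{rk:residue} together with the weight-$\mp 1$ residue computation at $\XX\qu_\pm G$. The paper packages the master-space construction and the compatibility of obstruction theories (which you rightly flag as the main technical point) into a separate Lemma \ref{vmaster}, and in the residue step at $\XX\qu_\pm G$ it spells out the Halpern-Leistner symmetric-product description of $\Eul(\nu_\pm)^{-1}$ rather than invoking Example \ref{firstex} directly; otherwise the arguments coincide.
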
  

\begin{example} \label{nodal} 
  {\rm (Wall-crossing over a nodal fixed point)}
  Suppose that \(\XX=\P^1\cup \P^1\) is a nodal projective line 
  with a single node \(x_0\in\XX \), 
  equipped with the standard \(G=\C^\times\) action
  on each component, so that the weights of the action on
  the tangent spaces at the node are \(\pm 1\).  Equip \(\XX\)
  with a polarization so that the weights are \(\pm 1\) at the
  smooth fixed points, and \(0\) at the nodal
  point \(x_0=\XX^{\zeta,0}\). Then \(\XX
  \qu_t G\) is a point for \(t \in (-1,1)\), and is singular
  for \(t=0\). Since \(\XX\) is a complete intersection, \(\XX\)
  it has a perfect obstruction theory \cite[Example before
  Remark 5.4]{bf:in} and the virtual wall-crossing formula
  of Theorem \ref{vkalkman} applies. We examine the
  wall-crossing for the class \(\cO_\XX\) at the
  singular value \(t= 0\). The virtual normal complex at the
  nodal point is the quotient of \(\C_1 \oplus \C_{-1}\), the
  sum of one-dimensional representations with weights
  \(1,-1\), modulo their tensor product \(\C_1 \otimes
  \C_{-1}\), which has weight \(1-1=0\). Hence the normal complex
  has inverted Euler class
\[ 
  \Eul_{\Ct}( \nu_{\XX^{\zeta,0}})^{-1} = \frac{1}{
  (1-z^{-1})(z-1)}
\]
whose residue is
zero by Example \ref{ex:0res}.
The Euler characteristics on the left and right hand sides
are \(1\) while the wall-crossing term is
\begin{eqnarray*} 1 - 1 &=& \tau_{\XX \qu_+ G} \
  \kappa_{\XX,+}^G - \tau_{\XX \qu_- G} \ \kappa_{\XX,-}^G
  \\ &=& \tau_{\XX,\zeta,0} =
  \chi^\vir \left(x_0, \Eul_{\Ct}(
  \nu_{\XX^{\zeta,0}})^{-1}\right) =  0 \end{eqnarray*}
compatible with the wall-crossing formula.  \end{example}

The proof of Theorem \ref{vkalkman} uses the construction of a
master space for this set up. However, the same construction
as before with small modifications applies. 

\begin{lemma} \label{vmaster} 
There exists a proper Deligne-Mumford \(\C^\times\)-stack
\(\ti{\XX}\) equipped with a line bundle ample for the coarse
moduli space whose git quotients \( \ti{\XX} \qu_t \C^\times\)
are isomorphic to those \( \XX \qu_t G\) of \(\XX\) by the
action of \(G\) with respect to the polarization \(\LL_t\) and
whose fixed point set \(\ti{\XX}^{\C^\times}\) is given by the
union
\begin{equation}\label{eq:fixedset}
\ti{\XX}^{\C^\times} = (\XX \qu_- G) \cup (\XX \qu_+ G)
\cup \iota_\zeta( \XX^\zeta \qu_0
(G_\zeta/\C^\times_\zeta))
\end{equation}
where \(\iota_\zeta\)
is the natural map to \(\ti{\XX}\)
as before.  Furthermore, \(\ti{\XX}\)
has a perfect obstruction theory admitting a global resolution by
vector bundles with the property that the virtual normal complex of
\(\XX^\zeta \qu_0 (G_\zeta/\C^\times_\zeta)\)
is isomorphic to the image of \(\nu_{\XX^\zeta} / (\g/\C\zeta)\)
under the quotient map
\(\XX^\zeta \to \XX^\zeta \qu (G_\zeta / \C^\times_\zeta)\),
by an isomorphism that intertwines the action of \(\C^\times_\zeta\)
on
\((\nu_{\XX^\zeta} / (\g/\C\zeta) ) \qu (G_\zeta / \C^\times_\zeta)\)
with the action of \(\C^\times\) on \(\nu_{\ti{\XX}^{\C^\times}}\).
\end{lemma}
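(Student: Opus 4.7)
The plan is to adapt the smooth master-space construction of Definition \ref{def:master} to the stacky, virtual setting. First, I would form the projective bundle $\cP := \P(\LL_- \oplus \LL_+) \to \XX$ with its natural $G \times \C^\times$-action (the $\C^\times$-factor rotating the second summand) and the $G$-polarization given by the relative hyperplane bundle $\mO_\cP(1)$ twisted by a high pullback from $\XX$. Exactly as in the smooth case, the isotypical decomposition of sections of high tensor powers realizes the one-parameter family $\LL_t$ on $\XX$, so the $G$-semistable locus in $\cP$ is the union $\bigcup_t \pi^{-1}(\XX^{\ss}(\LL_t))$. Setting $\ti{\XX} := \cP \qu G$ and invoking Assumption \ref{simple} (which forces stable $=$ semistable on the $G$-action on $\cP^{\ss}$) produces a proper Deligne-Mumford stack, and Thaddeus's argument \cite{th:fl} adapted to the stack case gives the identification $\ti{\XX} \qu_t \C^\times \cong \XX \qu_t G$.

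Next I would identify the fixed locus $\ti{\XX}^{\C^\times}$ in \eqref{eq:fixedset} by the same argument used in Section \ref{ss:masterspace}: a point $[l_-,l_+]$ is $\C^\times$-fixed modulo $G$ iff fiber rotation is cancelled by the action of some one-parameter subgroup $\exp(t\zeta)\subset G$. The degenerate cases $l_\pm = 0$ produce the two distinguished components $\XX \qu_\pm G$, while the nondegenerate case forces the underlying point of $\XX$ to be $\LL_0$-semistable and $\zeta$-fixed, giving the image of $\iota_\zeta$. The wall-crossing assumption ensures these are the only contributions.

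For the perfect obstruction theory I would pull back the $G$-equivariant perfect obstruction theory $E^\bullet \to L_\XX$ along $\pi$ and combine it with $L_{\cP/\XX}$ (a locally free sheaf of rank one, since $\cP \to \XX$ is a smooth $\P^1$-bundle) to obtain a $G \times \C^\times$-equivariant perfect obstruction theory $E_\cP^\bullet \to L_\cP$ which admits a global resolution by vector bundles because both $E^\bullet$ and $L_{\cP/\XX}$ do. Restricting to the $G$-semistable locus and applying Lemma \ref{qot} descends $E_\cP^\bullet$ to a $\C^\times$-equivariant perfect obstruction theory on $\ti{\XX}$; the existence of a global resolution survives the descent because the $G$-action on the semistable locus is proper with finite stabilizers. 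The virtual normal complex at $\iota_\zeta\bigl(\XX^\zeta \qu_0 (G_\zeta/\C^\times_\zeta)\bigr)$ is then computed by restricting $E_\cP^\bullet$ to $\pi^{-1}(\XX^\zeta)$, extracting the $\C^\times$-moving part, and descending along the quotient by $G_\zeta/\C^\times_\zeta$; the fiber-rotation direction of $\cP \to \XX$ becomes trivial after the descent, leaving precisely the moving part of $\nu_{\XX^\zeta}/(\g/\C\zeta)$, and the canonical isomorphism $\C^\times \cong \C^\times_\zeta$ produced by the one-parameter subgroup used to detect the fixed point intertwines the two $\C^\times$-actions.

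The main obstacle I anticipate is the bookkeeping of moving-versus-fixed parts of the obstruction theory across the two successive quotient operations (first by $G$ to form $\ti{\XX}$, then by $G_\zeta/\C^\times_\zeta$ to reach the fixed component). One must separate genuine virtual deformations transverse to $\XX^\zeta$ from spurious ones along $\g/\C\zeta$ and from the fiber direction of $\cP \to \XX$, and verify that the resulting class on $\ti{\XX}^{\C^\times}$ has precisely the form and $\C^\times_\zeta$-weight structure asserted in the lemma, since this is exactly what is needed to feed into the virtual Atiyah-Segal formula and recover Theorem \ref{vkalkman}.
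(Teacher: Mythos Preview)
Your proposal is correct and follows essentially the same approach as the paper: form $\ti{\XX} = \P(\LL_-\oplus\LL_+)\qu G$, identify the fixed locus by the same one-parameter-subgroup argument as in the smooth case, build the obstruction theory on $\cP$ from the exact triangle $E_\XX \to E_\cP \to L_{\cP/\XX}$, descend via Lemma~\ref{qot}, and then observe that over the open locus where both $l_\pm\neq 0$ the relative cotangent $L_{\cP/\XX}$ trivializes via the fiber $\C^\times$-action so that the virtual normal complex reduces to the moving part of $\Cone(\ti{a}^\dual_{\XX^\zeta})$. Your anticipated ``main obstacle'' is exactly the content of the paper's final paragraph, handled there by the same triangle-and-trivialization bookkeeping you describe.
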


\begin{proof} The construction of the master space is the same as in
  the smooth case, that is, the master space is the stack-theoretic
  quotient \( \ti{\XX}=\P(\LL_- \oplus \LL_+) \qu G .\)
  The action of \(G\)
  on the semistable locus in \(\P(\LL_- \oplus \LL_+)\)
  is locally free by assumption.  It follows that \(\ti{\XX}\)
  is a proper Deligne-Mumford stack, and by Lemma \ref{qot} has a
  perfect obstruction theory induced from the natural obstruction
  theory on \(\P(\LL_- \oplus \LL_+)\)
  given by considering it as a bundle over \(\XX\).
  The quotient \(\ti{\XX}\)
  is such that \(\ti{\XX}\qu_t \Ct\)
  is isomorphic to \(\XX\qu_t G\)
  for \(t\neq 0\).
  In fact it contains the quotients of \(\P(\LL_\pm) \cong \XX\)
  with respect to the polarizations \(\LL_\pm\),
  that is, \(\XX \qu_\pm G\).

  The same argument as before describes the fixed point loci: 
  they correspond to fixed point loci in 
  \(\P(\LL_- \oplus \LL_+)\) for one-parameter
  subgroups of \(\C^\times \times G\). Given such a locus
  \(\P(\LL_-\oplus \LL_+)^{\xi
  +\zeta}\), the pull-back of the virtual normal complex is by
definition the moving part of \(\Cone(\ti{a}_{\P(\LL_-\oplus
  \LL_+)}^\dual)\), where 
\[\ti{a}^\dual_{\P(\LL_-\oplus \LL_+)}: E_{\P(\LL_- \oplus
\LL_+)} \to \ul{\g}^\dual_\zeta\]
is the lift of the infinitesimal action of \(G_\zeta\).
Consider the fibration \(\pi: \P(\LL_- \oplus \LL_+) \to
\XX\).  By definition \(E_{\P(\LL_- \oplus \LL_+)}\) fits into
an exact triangle
\[   E_\XX \to E_{\P(\LL_- \oplus \LL_+)} \to L_\pi \to
E_\XX[1] .\]
Over the complement \(\P(\LL_- \oplus \LL_+)\setminus
(D_0\cup D_\infty) \subset \P(\LL_- \oplus \LL_+)\) of the
sections at zero and infinity we may identify \(L_\pi \cong
\ul{\C}\) using the \(\C^\times\)-action on the fibers, by the
assumption on the weights of the \(\C_\zeta\) action on the
fiber. Thus the projection to \(\XX\) identifies
\[\Cone(\ti{a}^\dual_{\P(\LL_-\oplus \LL_+)} |_{\P(\LL_-
  \oplus \LL_+)^{\xi + \zeta}}) \to \pi^*
  \Cone(\ti{a}^\dual_{\XX^\zeta})
\]
where 
\[
\ti{a}^\dual_{\XX^\zeta}: E_{\XX} | \XX^\zeta \to (
\ul{\g_\zeta/\C\zeta} )^\dual
\]
is the lift of the infinitesimal action of
\(\g_\zeta/\C\zeta\).  Now the virtual normal complex is by
definition the \(\C^\times\)-moving part of the perfect
obstruction theory; the Lemma follows.  
\end{proof} 

\begin{proof}[Proof of Theorem \ref{vkalkman}] 
  For any equivariant class \(\alpha \in K^0_G(\XX)\),
  its pullback to \(\P(\LL_- \oplus \LL_+)\)
  descends to a class \(\ti{\alpha} \in K^0_{G}(\ti{\XX})\)
  whose restriction to \(\XX \qu_\pm G\)
  is \(\kappa_{X,\pm}^G(\alpha)\),
  and whose pullback under \(X^{\zeta,0} \to \ti{X}^{\C^\times}\)
  is \(\iota_{X^{\zeta,0}} \alpha\).
  By virtual localisation \eqref{eq:vloc} and the description of the
  fixed set, the normal bundles in Lemma \ref{vmaster}
  \begin{multline}
	0=\chi^{\vir}({\ti{\XX}}; \Resid \delta(\alpha)) =\\
	\chi^{\vir}\left(\XX \qu_- G;
	\Resid \frac{\kappa_{\XX,-}^G(\alpha)}{\Eul_{\Ct}
  (\nu_-)}\right) + 
  \chi^{\vir}\left(\XX \qu_+ G;
  \Resid \frac{\kappa_{\XX,+}^G(\alpha)}{\Eul_{\Ct}
(\nu_+)}\right)+\\
\chi^{\vir}\left({\XX^{\zeta,0}/(G_\zeta/\C^\times)};
\Resid \frac{\iota^*_{{\XX}^{\zeta,0}}
\alpha}{\Eul_{\Ct_\zeta}(\nu_{\XX^{\zeta,0}})}
\right).
\end{multline}
As in \eqref{firstex} we have 
\[
  \Resid \chi( \XX \qu_\mp G, 
\frac{\kappa_{\XX,\mp}^G(\alpha)}{\Eul_{\Ct}(\nu_\mp)}
=\pm
\chi(\XX \qu_\mp G, \kappa_{\XX,\mp}^G(\alpha) .) 
\] 
Indeed, by definition the normal bundle $\nu_F$ at $\XX \qu_\mp G$ has
virtual dimension one with positive resp. negative weight, and the
inverted Euler class $\Eul(\nu)^{-1}$ is the symmetric product
$\Sym( \nu ) $ for the Bialynicki-Birula decomposition for positive
weight, or $\Sym(\nu^\dual) \det(\nu^\dual)$ for the decomposition
with negative weight.  \footnote{That these classes both agree with
  the inverted Euler class in localized K-theory follows by inspection
  from Riemann-Roch \cite{to:rr}.  In fact, the agreement with the
  inverted Euler class is not necessary and one may take the
  difference in the Halpern-Leistner version of virtual localization
  for the Bialynicki-Birula decomposition and its opposite as the
  definition of residue.}  For $\chi(\XX \qu_+ G)$ the invariant part
of the first is the trivial line bundle, while for the second the
invariant part vanishes since the weights are positive so the
difference in \eqref{residuedef} of
$\frac{\kappa_{\XX,\mp}^G(\alpha)}{\Eul_{\Ct}(\nu_\mp)}$ is
$\kappa_{\XX,\mp}^G(\alpha)$.  For $\chi(\XX \qu_- G)$, the weight of
$\nu$ is negative and $\Sym(\nu)$ appears in the Bialynicki-Birula
decomposition for negative weight.  Thus the residue is
$\kappa_{\XX,\mp}^G(\alpha)$, which completes the proof.
\end{proof}

\section{Wall-crossing in quantum K-theory}
\label{qwall} 

The main result in this section, Theorem \ref{gwall} below, relates
the quantum K-theory pairings on both sides of a wall-crossing.  Let
\(X \qu_\pm G\)
denote the associated quotient stacks \([X^{\ss}(\pm)/G]\)
at times \(t=\pm 1\) and let
\[ \kappa_{X,\pm}^G : QK_G^0(X,\LL_\pm) \to QK_{\C^\times}^0(X \qu
_\pm G) \]
denote the associated quantum Kirwan maps. Consider the
graph potentials 
\[ \tau_{X \qu_\pm G}: QK_{\C^\times}^0(X \qu_\pm G) \to
\Lambda_{X,\LL_\pm}^G . \]
Denote by 
\[QK_G^{0,\on{fin}}(X) \subset
QK_G^0(X,\LL_-) \cap QK_G^0(X,\LL_+)\]
the subset of classes of sums lying in both completions; for example,
any finite sum lies in this intersection.  We want to establish a
formula for the difference
\[\tau_{X \qu_+ G} \ \kappa_{X,+}^G - \tau_{X \qu_- G}
\ \kappa_{X,-}^G: QK_G^{0,\on{fin}}(X) \to \Lambda_X^G \]
as a sum of fixed point contributions given by gauged
Gromov-Witten invariants with smaller structure groups
\(G_\lambda/\Ct_\lambda\).

\subsection{Master space for gauged maps and wall-crossing}
\label{master2}

We recall from \cite[Proposition 3.1]{wall} the construction of master
space whose quotients are the moduli stacks of Mundet stable gauged
maps.

\begin{proposition} [Existence of a master space]  
\label{masterprop}
Under assumptions of simple wall-crossing \ref{simple}, for each
equivariant degree \(d\in H_2^G(X)\)
there exists a proper Deligne-Mumford \(\C^\times\)-stack
\(\ol{\M}_n^G(C,X,\LL_-,\LL_+,d)\) with the following properties:
\begin{enumerate}
\item \label{mobs} 
  The stack \(\ol{\M}_n^{G}(C,X,\LL_-,\LL_+,d)\) has
  a \(\Ct\)-equivariant perfect 
  obstruction theory, relative over the moduli
  stack \(\ol{\MM}_n(C)\) of prestable maps to \(C\) of class
  \([C]\).  
\item the git quotients of \(\ol{\M}_n^G(C,X,\LL_-,\LL_+)\)
  by the \(\Ct\)-action
  are isomorphic to the moduli stacks
  \(\ol{\M}_n^G(C,X, \LL_-^{(1-t)/2} \otimes \LL_+^{(1+t)/2})\)
  for parameter \(t \in (-1,1)\);
\item the \(\C^\times\)-fixed
  substack includes \(\ol{\M}_n^G(C,X,\LL_-,d)\)
  and \(\ol{\M}_n^G(C,X,\LL_+,d)\);
  the other fixed point components are isomorphic to substacks of {\em
    reducible} elements of
  \(\ol{\M}_n^G(C,X, \LL_-^{(1-t)/2} \otimes \LL_+^{(1+t)/2})\)
  for $t \in (-1,1)$ consisting of gauged maps with
  $\C^\times$-automorphisms.
\item  \(\ol{\M}_n^G(C,X,\LL_-,\LL_+,d)\) admits an embedding
  in a non-singular Deligne-Mumford stack.  
  \end{enumerate}
\end{proposition}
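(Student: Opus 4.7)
Following the construction in \cite{wall} and the non-gauged analog of Section \ref{ss:masterspace}, the plan is to construct $\ol{\M}_n^G(C,X,\LL_-,\LL_+,d)$ as a moduli stack of Mundet-semistable gauged maps to an auxiliary target built from $\P(\LL_- \oplus \LL_+)$. Set $Y := \P(\LL_- \oplus \LL_+)$ equipped with its natural $G \times \C^\times$-action as in Definition \ref{def:master}. For $k$ sufficiently large, $\mO_Y(k)$ is relatively ample, and when twisted by a pullback polarization on $X$ gives a $G$-polarization on $Y$ compatible with the fiberwise $\C^\times$-action. Define
\[
  \ol{\M}_n^G(C,X,\LL_-,\LL_+,d) := \ol{\M}_n^G(C,Y,d')
\]
as the moduli stack of Mundet-semistable $n$-marked gauged maps from $C$ to $Y/G$, where $d' \in H_2^G(Y)$ lifts $d$ and is chosen by the numerical data. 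Properness, Deligne-Mumfordness, and the existence of a relative perfect obstruction theory over $\ol{\MM}_n(C)$ follow from the general construction of the moduli of gauged maps in \cite{qkirwan} and \cite{reduc}, applied to the smooth projective $G$-variety $Y$; this gives (i), with the $\C^\times$-equivariance induced from the fiber rotation on $Y$.

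For (ii), a gauged map to $Y/G$ is the datum of a $G$-bundle $P \to \hat C$ together with a section $u: \hat C \to P(Y) = \P(P(\LL_-) \oplus P(\LL_+))$. Locally such a section is determined by a pair $(u_-,u_+)$ of sections of $P(\LL_\pm)$ up to an overall scaling; the $\C^\times$-action rotates $u_+$ relative to $u_-$. The Hilbert-Mumford weight for a one-parameter subgroup of $\C^\times$ with parameter $t \in (-1,1)$ acting on $(u_-,u_+)$ then precisely combines with the Mundet weight \eqref{rhosum} for the $G$-action on $Y$ to yield the Mundet weight for gauged maps into $X/G$ with polarization $\LL_t = \LL_-^{(1-t)/2} \otimes \LL_+^{(1+t)/2}$. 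Thus the $\C^\times$-git quotient at parameter $t$ is naturally identified with $\ol{\M}_n^G(C, X, \LL_t, d)$.

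For (iii), a $\C^\times$-fixed gauged map has its defining data $(P, u: \hat C \to P(Y))$ preserved up to isomorphism by every $z \in \C^\times$, so for every $z$ there is a compatible automorphism in $\Aut(P,u)$. By the same analysis as in Section \ref{ss:masterspace}, this forces either (a) $u$ to factor through one of the two distinguished sections $X \hookrightarrow Y$ at $0$ or $\infty$, in which case the data reduces to a gauged map to $X/G$ polarized by $\LL_\mp$ and we recover the two endpoint components $\ol{\M}_n^G(C,X,\LL_\mp,d)$; or (b) the $\C^\times$-action to be absorbed by a one-parameter subgroup $\lambda: \C^\times \to \Aut(P,u)$ coming from a central element of a Levi in $G$, in which case the gauged map is reducible with respect to the parabolic $R(\lambda)$ and the underlying data defines an element of $\ol{\M}_n^G(C, X, \LL_t, d)$ with strictly semistable Mundet data at some $t \in (-1,1)$. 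Finally, (iv) follows from the construction of $\ol{\M}_n^G(C,Y,d')$ as a substack of a moduli of principal bundles times a relative Quot-scheme parametrizing the sections, each of which embeds in a smooth global-quotient DM stack, as carried out in \cite[Part 3, Proposition 9.14]{qkirwan} and \cite[Proposition 3.1(d)]{wall}; the smooth embedding is what enables the application of virtual localization \cite{hal:strat} in Section \ref{qwall}.

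The main technical obstacle is item (iii), specifically matching the non-endpoint $\C^\times$-fixed components with reducible objects in $\ol{\M}_n^G(C,X,\LL_t,d)$ at the appropriate wall value of $t$: one must verify that the parabolic reduction $R(\lambda)$ extracted from a $\C^\times$-automorphism has Ramanathan-Hilbert-Mumford weight exactly zero for the interpolated polarization $\LL_t$ with $t$ determined by $\lambda$, and conversely that every such reducible object lifts to a unique $\C^\times$-fixed gauged map into $Y/G$. Once this correspondence is established, the remaining verifications are direct adaptations of the corresponding statements for $\tilde X$ in Section \ref{ss:masterspace}.
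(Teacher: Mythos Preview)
Your plan to realise the master space as $\ol{\M}_n^G(C, Y, d')$ for $Y = \P(\LL_- \oplus \LL_+)$ is the natural first guess, but it does not produce a stack with property~(ii). A gauged map to $Y/G$ decomposes as a gauged map $(P, v: \hat C \to P(X))$ together with a section of the $\P^1$-bundle $\P\bigl(v^*P(\LL_-) \oplus v^*P(\LL_+)\bigr)$ over the \emph{entire domain curve} $\hat C$, and the residual $\C^\times$ acts on that section only by a single global scalar. Since $v^*P(\LL_-)$ and $v^*P(\LL_+)$ are line bundles on $\hat C$ of typically different degrees $\langle d, c_1^G(\LL_\pm)\rangle$, sections of any fixed numerical class form a positive-dimensional family modulo $\C^\times$; hence $\ol{\M}_n^G(C,Y,d') \qu_t \C^\times$ fibers over $\ol{\M}_n^G(C,X,\LL_t,d)$ with positive-dimensional fibers rather than being isomorphic to it. A virtual-dimension count makes this visible: the excess over $\dim\ol{\M}_n^G(C,X,\LL_t,d)+1$ is $2\langle d', c_1(\mO_Y(1))\rangle + \langle d, c_1^G(\LL_-\otimes\LL_+)\rangle$, which vanishes only for a fiber degree that is half-integral whenever $\langle d, c_1^G(\LL_-\otimes\LL_+)\rangle$ is odd. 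The phrase ``$d'$ \ldots chosen by the numerical data'' conceals this problem rather than resolving it.

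The construction in \cite[Proposition~3.1]{wall}, which the present paper simply cites, applies the Thaddeus master space one level up. One first presents $\ol{\M}_n^G(C,X,\LL_t,d)$ itself as a GIT quotient $R \qu_{\tilde\LL_t} H$ of a quasi-projective parameter scheme $R$ (a Quot-type scheme over a smooth atlas for the stack of $G$-bundles on $C$) by a reductive group $H$, with the $H$-linearization $\tilde\LL_t$ on $R$ chosen so that its Hilbert--Mumford weights reproduce the Mundet weights \eqref{rhosum} for $\LL_t$. The master space is then $\P(\tilde\LL_-\oplus\tilde\LL_+)\qu H$, i.e.\ Definition~\ref{def:master} applied to $(R,H,\tilde\LL_\pm)$ rather than to $(X,G,\LL_\pm)$. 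With this definition (ii) follows directly from the eigen-decomposition \eqref{relhyp}, and (i), (iii), (iv) from Lemma~\ref{vmaster} together with the smoothness of $R$. Your description of the non-endpoint fixed components in (iii) is essentially correct once transplanted to this setting.
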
 

For any fixed point component
\(F \subset \ol{\M}_n^G(C,X,\LL_-,\LL_+,d)\)
denote by \(\nu_F\)
the normal complex, that is, the \(\C^\times\)-moving
part of the perfect obstruction theory of Proposition \ref{masterprop}
part \eqref{mobs}. The following is a direct application of virtual
wall crossing applied to the master space.  Denote the evaluation map
\[
\ev: \ol{\M}_n^G(C,X,\LL_-,\LL_+,d)\to (X/G)^n.\]

\begin{proposition}
For any class \(\alpha \in K_G^0(X)^n\)
\begin{multline}  \label{fpf}
\chi^{\vir}\left( \ol{\M}_n^G(C,X,\LL_+,d); \ev^*\alpha \right)  
 - \chi^{\vir}\left( \ol{\M}_n^G(C,X,\LL_-,d); 
 \ev^*\alpha \right) \\ = \sum_F 
 \chi^{\vir} \left(F ; \Resid \frac{\iota_F^* \ev^* \alpha}{ 
   \Eul_{\C^\times}(\nu_F)} \right)
\end{multline}
where \(F\) ranges over the fixed point components of
\(\C^\times\) on the moduli \(\ol{\M}^G_n(C,X,\LL_-,\LL_+,d)\) that are not equal to
\(\ol{\M}^G_n(C,X,\LL_\pm,d)\).
\end{proposition}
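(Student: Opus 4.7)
The plan is to apply the virtual Kalkman--Metzler formula (Theorem \ref{vkalkman}) to the master space $\ti{\XX}:=\ol{\M}_n^G(C,X,\LL_-,\LL_+,d)$ constructed in Proposition \ref{masterprop}, with the $\C^\times$--equivariant class $\ti{\alpha}$ obtained by descending the pullback of $\ev^*\alpha$ along the projection $\P(\LL_-\oplus\LL_+)$-construction underlying the master space. Since Proposition \ref{masterprop} guarantees that $\ti{\XX}$ is a proper Deligne--Mumford stack with an equivariant perfect obstruction theory admitting a global resolution, and that it embeds in a non-singular Deligne--Mumford stack, the hypotheses of the virtual localisation theorem of Halpern-Leistner \cite[Theorem 5.7]{hal:strat} are satisfied.

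First I would apply the virtual localisation formula \eqref{eq:vloc} to $\ti{\alpha}$ on $\ti{\XX}$. The left-hand side is $\chi^{\vir}(\ti{\XX};\Resid\,\ti{\alpha})$, which equals zero by Lemma \ref{rk:residue} since $\ti{\alpha}$ is the image of a non-localised equivariant class. The right-hand side decomposes as a sum over fixed point components in Proposition \ref{masterprop}(iii): the two distinguished components $\ol{\M}_n^G(C,X,\LL_\pm,d)$ and the remaining ``reducible'' components $F$. Next I would identify the normal complexes of $\ol{\M}_n^G(C,X,\LL_\pm,d)$ inside $\ti{\XX}$ with the analogues of the line bundles $\cL_\pm^{\pm 1}$ from the smooth setting in Section \ref{ss:masterspace}; this identification follows from the same projectivisation construction, since the perfect obstruction theory on $\ti{\XX}$ is built from that on $\ol{\M}_n^G(C,X,\LL_\pm,d)$ plus the fibre direction of $\P(\LL_-\oplus\LL_+)$, which has $\C^\times$-weight $\mp 1$.

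Then I would compute the residue contributions of the two distinguished components using Example \ref{firstex}: the inverted Euler class of a line bundle of weight $\pm 1$ has residue $\pm 1$, so that
\[
\chi^{\vir}\!\left(\ol{\M}_n^G(C,X,\LL_\pm,d);\ \Resid \frac{\ev^*\alpha}{\Eul_{\C^\times}(\nu_\pm)}\right) = \pm\, \chi^{\vir}\!\left(\ol{\M}_n^G(C,X,\LL_\pm,d);\ \ev^*\alpha\right),
\]
exactly as in the proof of Theorem \ref{vkalkman}. Moving the two distinguished contributions to the left-hand side of the localisation identity and keeping the remaining fixed components on the right yields the formula \eqref{fpf}.

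The main obstacle I expect is verifying that the restriction of the perfect obstruction theory on $\ti{\XX}$ to the ``bulk'' fixed components $\ol{\M}_n^G(C,X,\LL_\pm,d)$ reproduces the intrinsic obstruction theory used to define the integrals on the left of \eqref{fpf}, together with a normal factor of weight $\mp 1$. This compatibility is the analogue of the identification of the conormal complex in the proof of Lemma \ref{vmaster}, and it relies on the fact that the master space is locally a $\P(\LL_-\oplus\LL_+)$-bundle over the moduli of gauged maps with the polarisation appearing as the fibre direction; on the non-distinguished fixed components $F$ no further analysis is needed since \eqref{fpf} simply records these contributions abstractly via the residue of the restricted class divided by $\Eul_{\C^\times}(\nu_F)$.
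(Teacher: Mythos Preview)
Your proposal is correct and follows exactly the approach the paper takes: the paper states only that the proposition ``is a direct application of virtual wall crossing applied to the master space,'' i.e.\ Theorem~\ref{vkalkman} applied to $\ol{\M}_n^G(C,X,\LL_-,\LL_+,d)$, and you have spelled out precisely those details (vanishing of the residue of the non-localised class by Lemma~\ref{rk:residue}, identification of the distinguished fixed components and their weight~$\mp 1$ normal bundles, and the residue computation from Example~\ref{firstex}). One minor point to double-check is the sign: since $\nu_\pm$ carries $\C^\times$-weight $\mp 1$, the residue of its inverted Euler class is $\mp 1$ rather than $\pm 1$, which is what is needed to obtain $\chi^{\vir}(\LL_+)-\chi^{\vir}(\LL_-)$ on the left after rearranging.
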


\subsection{Reducible gauged maps}

We analyze further the fixed point contributions in \eqref{fpf}, which
come from reducible gauged maps.  Let \(X\)
be a smooth projective \(G\)-variety.
Let \(Z \subset G\)
a central subgroup. For any principal \(G\)-bundle
\(P \to C\),
the right action of \(Z\)
on \(P\)
induces an action on the associated bundle \(P(X)\),
and so on the space of sections of \(P(X)\).
The fixed point set of \(Z\)
on \(P(X)\)
is \(P(X)^Z =P(X^Z)\),
the associated bundle with fiber the fixed point set
\(X^Z \subset X\).
The action of \(Z\)
on the space of sections of \(P(X)\)
preserves Mundet semistability, since the parabolic reductions are
invariant under the action and the Mundet weights are preserved. This
induces an action of \(Z\)
on \(\ol{\M}^G_n(C,X)\).

\begin{proposition} 
  Let \(Z \subset G\) be a central subgroup.  The fixed point
  locus for the action of \(Z\) on \(\ol{\M}_n^G(C,X)\) is the
  substack whose objects are tuples 
\[(p: P \to C, u: \hat{C} \to P(X),\ul{z})\] 
such that
\begin{enumerate} 
\item \(u\) takes values in \(P(X^Z)\) on the principal component \(C_0\);
\item for any bubble component \(C_i \subset \hat{C}\) mapping to a
  point in \(C\), \(u\) maps \(C_i\) to a one-dimensional orbit of \(Z\) on
  \(P(X)\); and
\item any node or marking of \(\hat{C}\) maps to the fixed point set
  \(P(X^Z)\).
\end{enumerate} 
\end{proposition}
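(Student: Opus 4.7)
The plan is to characterize $Z$-fixed objects by a componentwise analysis of the domain curve $\hat{C}$. Concretely, a $Z$-fixed point of $\ovl{\M}_n^G(C,X)$ is an object $(P, u, \ul{z})$ together with, for each $z \in Z$, a bundle automorphism $\phi_z: P \to P$ covering the identity on $\hat{C}$, fixing the marked points, and satisfying $u \circ \phi_z = z \circ u$, with a cocycle compatibility in $z$. I will prove necessity and sufficiency of the three stated conditions separately.

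For necessity, on the principal component $C_0$ (mapping to $C$ as a stable degree-$[C]$ map), the assumption that stable equals semistable forces the stabilizer of $(P, u)$ at a generic point to be trivial. Choosing a local trivialization near such a point $c \in C_0$, the identity $u \circ \phi_z = z \circ u$ translates to $u(c)$ being fixed by a group element $g_z(c) \cdot z \in G$, where $g_z(c) \in G$ is the value of $\phi_z$ in this trivialization. Continuity forces $g_z(c)\, z$ to be locally constant in $c$, and a global gauge transformation absorbs $g_z$, so $u(C_0)$ factors through $P(X^Z)$, yielding (a). On a bubble component $C_i \cong \P^1$ mapping to a point of $C$, the restriction $P|_{C_i}$ is trivializable, so $u|_{C_i}$ is a morphism $\P^1 \to X$; equivariance forces $z \circ u|_{C_i} = u|_{C_i} \circ \psi_z$ for some automorphism $\psi_z$ of $C_i$ fixing the special points, which is possible only if the image of $u|_{C_i}$ is a single $Z$-orbit of dimension at most one, giving (b). Any node or marking maps to a single point, which must be simultaneously fixed under the $Z$-actions on the two adjacent components, hence lies in $P(X^Z)$, yielding (c).

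For sufficiency, I construct a $Z$-equivariant structure on $(P, u, \ul{z})$ satisfying (a)--(c) directly: take $\phi_z = \mathrm{id}$ on $P|_{C_0}$ (valid by (a)), and on each bubble $C_i$ define $\phi_z$ by the unique automorphism of $C_i$ that translates along the 1-dimensional orbit carrying the image of $u|_{C_i}$, which by (c) fixes the attaching node and by hypothesis fixes all markings. Mundet semistability is preserved since, as noted earlier in the excerpt, the $Z$-action on sections preserves Mundet stability. The main technical point will be to identify the compensating reparameterization $\psi_z$ on each bubble: this requires recognizing the closure of a 1-dimensional $Z$-orbit as $\P^1$ and pinning down $\psi_z$ as a specific element of $\mathrm{Aut}(\P^1)$ via its prescribed action on the attaching point; this is a routine analysis of orbit geometry, with the orbifold case requiring only small bookkeeping adjustments at the twisted nodes where $\lambda = \infty$.
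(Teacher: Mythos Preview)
The paper states this proposition without proof, so there is no argument in the paper to compare against.

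Your approach has a genuine gap at the step establishing condition (a). You argue that on $C_0$ the relation $\phi_z\cdot u = z\cdot u$ forces, after a gauge transformation, $u(C_0)\subset P(X^Z)$. But since $Z$ is \emph{central} in $G$, right multiplication by $z$ is itself a global bundle automorphism of $P$, and the induced map on $P(X)$ sends $[p,x]$ to $[pz,x]=[p,zx]$. Hence the choice $\phi_z=(\text{right multiplication by }z)$, $\psi_z=\mathrm{id}$, solves $\phi_z\cdot u=z\cdot u$ for \emph{every} section $u$, with no constraint whatsoever on $u|_{C_0}$. In other words, with the fixed-point notion you have written down, the $Z$-action on isomorphism classes is trivial and every object carries this canonical equivariant structure. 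Your sentence ``a global gauge transformation absorbs $g_z$, so $u(C_0)$ factors through $P(X^Z)$'' conflates absorbing the gauge cocycle (always possible, via $\phi_z=z$) with forcing $u$ to be $Z$-invariant (which does not follow). To obtain the stated characterization one must explain why this tautological equivariant structure is to be discarded---for instance via a rigidification, or by identifying the relevant fixed locus with the appropriate component inside the master space $\ol\M_n^G(C,X,\LL_-,\LL_+)^{\C^\times}$, where the acting $\C^\times$ is genuinely external to the structure group.

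There is also an internal inconsistency: you first require $\phi_z$ to cover the identity on $\hat C$, but on bubble components you then invoke a nontrivial domain reparametrization $\psi_z$; these are incompatible as stated. And in the sufficiency direction, condition (c) says only that markings have $Z$-fixed \emph{image}; this does not by itself force your $\psi_z$ to fix the marked \emph{points} of $C_i$ when $u|_{C_i}$ is a multiple cover of the orbit closure.
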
 

We introduce notation for the various substacks of reducible maps. 
Let \(\zeta \in \g\) generate a one-parameter 
subgroup \(\C^\times_\zeta \subset G\).  Recall that \(G_\zeta\) denotes
the centralizer in \(G\) and so it contains
\(\C^\times_\zeta\) as a central subgroup.  Let
\[\ol{\M}_n^{G_\zeta}(C,X,\LL_t,\zeta,d)  \]
denote the stack of \(\LL_t\)-Mundet-semistable morphisms from
\(C\) to \(X/G_\zeta\) that are \(\C^\times_\zeta\)-fixed and take
values in \(X^{\zeta}\) on the principal component and in
\(X/G_\zeta\) on the bubbles.  Because these gauged maps
correspond to the smaller group \(G_\zeta\), we call 
them as \emph{reducible} gauged maps.

Each component of reducibles has an equivariant perfect obstruction
theory.  Recall that the obstruction theory for the moduli of gauged
maps \(\ol{\M}^G_n(C,X,d)\)
is given by the complex \(Rp_*e^*T_{X/G}^\dual\),
which is relative with respect to \(\ol{\MM}_n(C)\).
The moduli \(\ol{\M}^{G_\zeta}_n(C,X,\LL_0,\zeta,d)\)
is an Artin stack, and if every automorphism group is finite modulo
\(\C^\times_\zeta\),
it is a proper Deligne-Mumford stack with a \(\C^\times\)-equivariant
relatively perfect obstruction theory over \(\ol{\MM}_n(C)\).
This follows from the fact that the relative perfect obstruction
theory for \( \ol{\M}^{G_\zeta}_n(C,X,\LL_t,\zeta,d)\)
is pulled back from that on the \(\C^\times\)-fixed
point set in the master space
\(\ol{\M}_n^G(C,X,\LL_-,\LL_+,d)^{\C^\times}\).
This coincides with the \(\C^\times_\zeta\)-equivariant
obstruction theory on the stack
\(\ol{\M}^{G_\zeta}_n(C,X,\LL_0,\zeta,d)\)
whose relative part is the \emph{cone} with target the trivial bundle
$\ul{\C}_\zeta^\dual$ with fiber the Lie algebra $\C_\zeta$ of
$\C^\times_\zeta$
\[\on{Cone}(Rp_*e^* T_{X/G}^\dual \to \C_\zeta^\dual)\]
given by the infinitesimal action of \(\C^\times_\zeta\).
The complex \(Rp_* e^* T_{X/G}^\dual\) itself is not perfect
because of the
\(\C^\times_\zeta\)-auto\-mor\-phisms; taking the cone has the
effect of cancelling this additional automorphisms. Denote by
\(\nu_0\) the virtual (co)normal complex of the morphism
\[
\ol{\M}^{G_\zeta}_n(C,X,\LL_t,\zeta) \to
\ol{\M}_n^G(C,X,\LL_-,\LL_+),
\]
and as before, denote by
\[ 
\Eul_{\Ct}(\nu_0) \in
K(\ol{\M}^G_n(C,X,\LL_0,\zeta))
\]
its Euler class. 

\label{ss:redgauge}
Virtual Euler characteristics over the reducible gauged maps gives
rise to the fixed point contributions in the wall-crossing formula:
these are \emph{\(\zeta\)-fixed
  K-theoretic gauged Gromov-Witten invariants}.  The \(\zeta\)-fixed
K-theoretic gauged Gromov-Witten invariants that appear in the
wall-crossing formula involve further twists by the inverse of Euler
classes of the virtual normal complex
\(\Eul_{\Ct}(\nu_0)^{-1}\).
Before we made this explicit, we need to allow a slightly larger
coefficient ring.  Denote by
\[
  \ti{\Lambda}_X^G := \Map(H_2^G(X,\Z),\Q) 
\]
the space of \(\Q\)-valued
functions on \(H_2^G(X,\Z)\) \label{switched}
(cf.  Equation \eqref{eq:novikov}).  Note that \(\ti{\Lambda}_X^G\)
has no ring structure extending that on \(\Lambda_X^G\).
The space \(\ti{\Lambda}_X^G\)
can be viewed as the space of distributions in the quantum parameter
\(q\),
and we use it as a master space interpolating Novikov parameters for
the quotients with respect to \(\LL_t\)
as \(t\)
varies.  Let \(\ti{\Lambda}^G_{X,\fin}\)
denote the subspace of finite sums.

\begin{definition}
  Let \(X,G,\LL_\pm, \zeta \in \g\) as above, such that there
  is simple wall-crossing at the unique singular time
  \(t=0\) and such that 
  \(X^{\zeta,0}\) is non-empty. The \emph{fixed point potential}
  associated to this data is the map 
\begin{multline} \tau_{X,\zeta,0}: QK_G^{0,\fin}(X) \to
  \ti{\Lambda}_X^G \\ \quad \alpha \mapsto \sum_{d
  \in H_2^G(X,\Z)} \sum_{n \ge 0}
  \chi\left(\ol{\M}_n^{G}(C,X,\LL_0,\zeta,d); \Resid \frac{\ev^* (\alpha,
  \ldots, \alpha) }{\Eul_{\Ct}(\nu_0)}\right)
  \frac{q^d}{n!},
\end{multline}
for \(\alpha \in K_G^0(X)\), extended to \(QK_G^{0,\fin}(X)\) by
linearity. Here we omit the restriction map \(K_G^{0,\fin}(X)
\to K_{G_\zeta}^0(X)\) to simplify notation.

\end{definition}

\begin{remark} The fixed point potential \(\tau_{X,\zeta,0}\)
  takes values in \(\ti{\Lambda}_X^G\)
  rather than in \(\Lambda^G_{X}(\LL_0)\)
  because Gromov compactness fails for gauged maps in the case that a
  central subgroup \(\C^\times_\zeta\)
  acts trivially.  Indeed, in this case, the energy
  \( \lan d , c_1(L) \ran\)
  of a gauged map of class \(d\)
  does not determine the isomorphism class of the bundle, since
  twisting by a character of \(\C^\times_\zeta\)
  does not change the energy.
\end{remark} 

\subsection{The wall-crossing formula} 
 
We may now prove the quantum version of the Kalkman-Metzler formula. 

\begin{theorem}[Wall-crossing for gauged potentials]
  \label{gwall} Let \(X\) be a smooth \(G\)-variety.
  Suppose that \(\LL_\pm \to X\) are polarizations such that
  there is simple wall-crossing.  Then the gauged
  Gromov-Witten potentials are related by
\begin{equation}
 \tau^G_{X,+} - \tau^G_{X,-} =
\tau_{X,\zeta,0} 
\end{equation}
where the same
Mundet semistability parameter should be used to define the potentials
on both sides of the equation.
\end{theorem}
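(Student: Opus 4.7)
The plan is to reduce Theorem \ref{gwall} to the virtual Kalkman-Metzler wall-crossing formula (Theorem \ref{vkalkman}) applied to the master space of gauged maps $\ol{\M}_n^G(C,X,\LL_-,\LL_+,d)$ constructed in Proposition \ref{masterprop}. For each degree $d \in H_2^G(X,\Z)$ and number of markings $n \ge 0$, the master space is a proper Deligne-Mumford $\C^\times$-stack equipped with a $\C^\times$-equivariant perfect obstruction theory, whose two extreme git quotients recover $\ol{\M}_n^G(C,X,\LL_\pm,d)$. A class $\alpha \in QK_G^{0,\fin}(X)$ pulls back via evaluation to a class $\ev^*(\alpha,\ldots,\alpha)$ on this master space, which descends under the quantum Kirwan construction of Section \ref{toric} to give the integrands of $\tau_{X,\pm}^G(\alpha)$ on the two distinguished fixed components.

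First I would apply the virtual localization formula \eqref{eq:vloc} on the master space to the class $\ev^*(\alpha,\ldots,\alpha)$ and read off the resulting decomposition. By Proposition \ref{masterprop}\eqref{mobs} the fixed locus decomposes as
\[
\ol{\M}_n^G(C,X,\LL_-,d) \ \sqcup \ \ol{\M}_n^G(C,X,\LL_+,d) \ \sqcup \ \bigsqcup_\zeta \ol{\M}_n^{G_\zeta}(C,X,\LL_0,\zeta,d),
\]
where the last components are the reducible gauged map loci of Section \ref{ss:redgauge}. The boundary contributions $\ol{\M}_n^G(C,X,\LL_\pm,d)$ have virtual normal bundles of rank one and weights $\mp 1$, essentially as in the proof of the smooth Kalkman-Metzler formula, since the master space embeds locally near them as the projectivized normal bundle to $\LL_\pm$. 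Thus, as in Example \ref{firstex}, the residue of the inverted Euler class contributes $\pm 1$, yielding exactly $\tau_{X,\pm}^G(\alpha)$ after summing with the Novikov weight $q^d/n!$ and the residue identity $\Resid \delta(\ev^*\alpha) = 0$ of Lemma \ref{rk:residue}.

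Next I would identify the remaining fixed-point contributions with the fixed-point potential $\tau_{X,\zeta,0}(\alpha)$. This is where the infinitesimal action of $\C^\times_\zeta$ appearing in Lemma \ref{qot} enters: the perfect obstruction theory on the reducible locus is obtained from the relative obstruction theory $Rp_* e^* T_{X/G}^\dual$ by taking the cone with $\ul{\C}_\zeta^\dual$, which kills the automorphisms generated by $\C^\times_\zeta$ and makes the theory perfect; the moving part then matches the virtual normal complex $\nu_0$ of Section \ref{ss:redgauge}. An argument analogous to Lemma \ref{vmaster} identifies the $\C^\times$-equivariant normal complex of the reducible locus in the master space with (the image of) $\nu_0$, intertwining the weights of $\C^\times$ and $\C^\times_\zeta$. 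Assembling these identifications with the appropriate Novikov factor $q^d/n!$ and summing produces $\tau_{X,\zeta,0}(\alpha)$.

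The main obstacle will be justifying the passage to the infinite sum over $d$ and $n$, since Gromov compactness can fail on the reducible locus because twisting a bundle by a character of $\C^\times_\zeta$ leaves the energy $\langle d, c_1(\LL)\rangle$ unchanged. This is why the fixed-point potential is only defined in the larger space $\ti{\Lambda}_X^G$ of distributions on $H_2^G(X,\Z)$ rather than in the Novikov ring; and this is also why we must restrict the source to $QK_G^{0,\fin}(X)$, so that each coefficient of $q^d$ on both sides is a finite sum of virtual Euler characteristics against a fixed class $\alpha$, and the identity can be checked degree by degree. Once this degree-by-degree interpretation is fixed, the theorem follows from combining the three contributions above with Proposition \ref{masterprop} and the residue identity in each fixed degree and marking number.
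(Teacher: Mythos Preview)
Your proposal is correct and follows essentially the same approach as the paper: apply the virtual Kalkman--Metzler formula (Theorem~\ref{vkalkman}) to the master space $\ol{\M}_n^G(C,X,\LL_-,\LL_+,d)$ of Proposition~\ref{masterprop}, and identify the residual fixed-point contributions with the reducible gauged map loci of Section~\ref{ss:redgauge}. The paper's proof is terse (one sentence pointing to Theorem~\ref{vkalkman} and the preceding identifications), whereas you have unpacked the normal-bundle and residue computations and flagged the summation issue over $d$ and $n$; these elaborations are accurate and consistent with the paper's setup. Two small corrections: the reference to ``the quantum Kirwan construction of Section~\ref{toric}'' is misplaced (you mean the ordinary evaluation/pullback construction, not the toric Kirwan map), and under the simple wall-crossing assumption there is a single $\zeta$, so the disjoint union over $\zeta$ should be a single component.
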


\begin{proof}[Proof of Theorem \ref{gwall}] The statement follows from
  virtual Kalkman-Metzler formula \ref{vkalkman} applied to the master
  space \(\ol{\M}_n^G(C,X,\LL_-,\LL_+)\)
  and the identification of fixed point contributions as reducible
  gauged maps described in sections \ref{master2},\ref{ss:redgauge}.
\end{proof}

Combining Theorem \ref{gwall} with the adiabatic limit 
\eqref{largerel} yields:

\begin{theorem} [Quantum Kalkman-Metzler formula]
\label{kalk3}
Suppose that \(X\) is equipped with polarizations \(\LL_\pm\) so
that the wall crossing is simple (the only singular
polarisation is \(\LL_0\) ). Then the Gromov-Witten
invariants of \(X \qu_\pm G\) are related by twisted gauged
Gromov-Witten invariants with smaller subgroup \(G_\zeta
\subset G\)  
\begin{equation} 
  \tau_{X \qu_+ G}\ \kappa_{X,+}^G - \tau_{X
  \qu_- G}\ \kappa_{X,-}^G = \lim_{\rho \to 0}
  \tau_{X,\zeta,0}.
\end{equation}
\end{theorem}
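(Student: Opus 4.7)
The plan is to derive Theorem \ref{kalk3} by combining Theorem \ref{gwall} (the wall-crossing formula for the gauged potentials) with Theorem \ref{largearea} (the adiabatic limit theorem relating gauged potentials to Gromov-Witten potentials on the quotient). The basic idea is that Theorem \ref{gwall} gives an equality that holds for each fixed Mundet parameter $\rho > 0$ small enough that the simple wall-crossing hypothesis holds for both polarizations, namely
\[ \tau^G_{X,+}(\alpha) - \tau^G_{X,-}(\alpha) = \tau_{X,\zeta,0}(\alpha) \]
for $\alpha \in QK_G^{0,\on{fin}}(X)$. Taking the adiabatic limit $\rho \to 0$ and invoking Theorem \ref{largearea} on the left-hand side converts each $\tau^G_{X,\pm}$ into the composition $\tau_{X \qu_\pm G} \circ \kappa_{X,\pm}^G$, which yields the claim.

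The steps I would carry out in order are as follows. First, I would fix the polarizations $\LL_\pm$ and choose a Mundet parameter $\rho_0 > 0$ small enough so that for all $\rho \in (0,\rho_0]$ the simple wall-crossing assumption holds uniformly in $\rho$; this ensures that the master space construction of Proposition \ref{masterprop} applies. Next, for any such fixed $\rho$, I would invoke Theorem \ref{gwall} to obtain the identity $\tau^G_{X,+} - \tau^G_{X,-} = \tau_{X,\zeta,0}$, noting that all three terms a priori depend on $\rho$ through the Mundet semistability condition used in their moduli-theoretic definitions. Then I would apply Theorem \ref{largearea} to each of $\tau^G_{X,\pm}$ separately, concluding that $\lim_{\rho \to 0} \tau^G_{X,\pm} = \tau_{X \qu_\pm G} \circ \kappa_{X,\pm}^G$. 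Since both limits exist independently, the difference passes through the limit, giving
\[ \tau_{X \qu_+ G} \circ \kappa_{X,+}^G - \tau_{X \qu_- G} \circ \kappa_{X,-}^G = \lim_{\rho \to 0} \left( \tau^G_{X,+} - \tau^G_{X,-} \right) = \lim_{\rho \to 0} \tau_{X,\zeta,0}, \]
which is the claim.

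The main technical obstacle will be the behavior of the fixed-point contribution $\tau_{X,\zeta,0}$ as $\rho \to 0$. Unlike $\tau^G_{X,\pm}$, this term is defined via integration over moduli of reducible gauged maps with smaller structure group $G_\zeta/\C^\times_\zeta$, and Gromov compactness can fail because twisting by a character of $\C^\times_\zeta$ does not change the energy. Consequently $\tau_{X,\zeta,0}$ only takes values in the larger space $\ti{\Lambda}_X^G$ of distributions in $q$, and one must check that the relation obtained from Theorem \ref{gwall} makes sense term-by-term as a formal identity in this larger space, and that the $\rho \to 0$ limit of $\tau_{X,\zeta,0}$ exists in a reasonable sense (for instance by observing that the contribution of each fixed degree $d$ stabilizes for $\rho$ small enough, since the Mundet weights of the relevant parabolic reductions scale with $\rho$).

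A secondary issue to verify is the compatibility of the adiabatic limit in Theorem \ref{largearea}, which was stated for a single polarization, with the simultaneous application to both sides of a wall-crossing. Because the statement of Theorem \ref{largearea} is purely formal — each Taylor coefficient of $\tau^G_X$ in $\alpha$ is computed by a finite sum of virtual Euler characteristics which stabilize for $\rho$ small — applying it to each polarization separately and subtracting the resulting identities is legitimate, provided we work with classes $\alpha \in QK_G^{0,\fin}(X)$ so that the Novikov completions on both sides make sense. This finiteness restriction on $\alpha$ is precisely why the theorem is stated for $QK_G^{0,\fin}(X)$ rather than the full equivariant quantum K-theory.
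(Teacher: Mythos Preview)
Your proposal is correct and follows exactly the paper's approach: the paper simply states that Theorem \ref{kalk3} is obtained by combining Theorem \ref{gwall} with the adiabatic limit \eqref{largerel}, and your argument spells out precisely this combination. The additional technical remarks you raise about the $\rho \to 0$ behavior of $\tau_{X,\zeta,0}$ and the restriction to $QK_G^{0,\fin}(X)$ are reasonable elaborations, but the core logic matches the paper's one-line derivation.
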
  

In other words, failure of the following square 
\begin{equation}\label{diag:wall}
\begin{tikzcd} 
  QK^0_G(X,\LL_-)  \arrow{d}[swap]{\kappa_{X,-}^G}
& {QK^{0,\fin}_G(X)}  
\arrow{l} \arrow{r}& 
{QK^0_G(X,\LL_+)}
 \arrow{d}{\kappa_{X,+}^G} \\ 
 {QK(X \qu _- G)} \arrow{d}[swap]{\tau_{X \qu_- G}} & &
  {QK(X \qu _+ G)} \arrow{d}{\tau_{X \qu_+ G}} \\
  \Lambda_{X,\LL_-}^G \arrow{r} &\ti{\Lambda}_X^G &
  \Lambda_{X,\LL_+}^G
\arrow{l}
\end{tikzcd}
\end{equation}
to commute is measured by an explicit sum of wall-crossing terms given
by the contribution of the fixed gauged potential.  We remark that if
the wall-crossing is not simple, the contributions on the right-hand
side of the wall-crossing formula might come from several singular
values \(t\in (-1,1)\)
as the polarisations \(\LL_t\)
varies; however a simple modification of the argument above proves it
as well.

\section{Crepant wall-crossing}%
\label{cy}

In this section we use the quantum Kalkman-Metzler formula to prove a
version of the crepant transformation conjecture for K-theoretic
Gromov-Witten invariants, under some rather strong assumptions on the
weights involved in the wall-crossing.  We assume the following
symmetry condition on the weights involved in the wall-crossing.
Suppose we have a birational transformation of git type
\[ \phi : X \qu_- G \dashrightarrow X \qu_+ G \] 
defined by a simple wall-crossing induced by two polarisations
\(\LL_+, \LL_-\)
as in the previous sections.  Suppose that for \(\zeta\in \g\),
the fixed point component \(X^{\zeta,0}\)
is the one contributing to the wall-crossing term, and let
\(\nu_{X^{\zeta,0}}\to X^{\zeta,0}\)
be its normal bundle in \(X\).  Let
\[
  \nu_{X^{\zeta,0}} = \bigoplus_{j} \nu_j 
\]
be the isotypical decomposition so that
\(\Ct_\zeta\) acts  on \(\nu_j\) with weight \(\mu_j\). Note as
before that all the \(\mu_j\neq 0\). Let
\(r_j=\rank \nu_j\). 

\begin{definition} \label{crepantdef} The birational transformation
  \(\phi: X \qu_- G \to X \qu_+ G \)
  is {\em simply crepant} if the set of weights $\mu_i$ of the
  normal bundle of $X^{\zeta,0}$ in $X$ is invariant under
  multiplication by $-1$, that is, whenever $\mu_j$ is a weight with
  multiplicity $r_j$ then so is $-\mu_j$ with the same multiplicity.
\end{definition}
\noindent If the wall-crossing is not simple, it is simply crepant
if the condition in \ref{crepantdef} holds for all fixed point
components contributing to the wall-crossing terms.

We show invariance for the gauged potentials under crepant
wall-crossing if a certain symmetrised version of the Euler
characteristics are used.  Let \(T\)
be a torus acting on a Deligne-Mumford stack \(\XX\),
endowed with a perfect obstruction theory.  Suppose \(x\in \XX^T\)
is an isolated fixed point. Locally the virtual tangent space
\[
  T^{\vir}_x := \on{Def}_x - \on{Obs}_x. 
\]
can be decomposed as
\[
  T^{\vir}_x = \bigoplus_i \C_{a_i} - \bigoplus_j \C_{b_j}
\]
where \(a_i,b_j\)
are the weights of the deformation and obstruction spaces
respectively.  Define
\[
\widehat{\cO}^{\vir}_{\XX}:= \cO^{\vir}_{\XX,x}\tensor
(K^{\vir}_{\XX})^{1/2}, \quad K^{\vir}_{\XX}: = (\det
T^{\vir}_{\XX})^{-1} \] 
where a square root can be defined in rational K-theory via the Chern
character \cite{ed:rr}.  The resulting K-theoretic Gromov-Witten
invariants obtained by replacing the virtual structure sheaf by this
shift quantum K-theory at level $-1/2$ in the language of Ruan-Zhang
\cite{rz:level}.  At an isolated fixed point $x$ we have
\[
\widehat{\cO}^{\vir}_x:=
\frac{\prod_j (b^{1/2}_j-b^{-1/2}_j)}{\prod_i (a^{1/2}_i - a^{-1/2}_i)}
\]
where \(a_i^{1/2}, b^{1/2}_j\)
are formal, since they represent weights only after passing to a cover
\(\hat{T}\to T\).

The virtual localization formula may be re-written in terms of the
shifted structure sheaves.  Let $\hat{A} (\cdot) $ be the denominator
of the A-hat genus, mapping $R(T)$ to the space of functions defined
on some cover
\[
  \widehat{A}(a_1+a_2)=\widehat{A}(a_1) \widehat{A}(a_2) ;\
  \ \  \widehat{A}(a)
  =\frac{1}{a^{1/2}-a^{-1/2}}.
\]
where \(a\) is a weight (representation) of \(T\).  Define
an extension to \(\cF\in K_T^0(X)\) by 
\[
  \widehat{A}(\cF)= \prod_j \widehat{A}(y_j), 
\]
where the product runs over the equivariant Chern roots
\(y_j \in K_T^0(X) \) of \(\cF\).  Then localization 
\eqref{eq:vloc} becomes
\begin{equation}\label{eq:symvlociso}
  \widehat{\cO}^\vir_\XX = \iota_*
  (\widehat{A}(T^{\vir}_{\XX^T})
  \ \widehat{\cO}^\vir_{\XX^T}).
\end{equation}
This can be made more explicit as follows.  For each component
\(F\subset \XX^{T}\)
we have a decomposition 
\[T^{\vir}_{\XX}|_{F}= T^{\vir}_F + \nu_F\]
and therefore 
\[ (K_{\XX}|_F)^{1/2}= K_F^{1/2}(\det \nu_F)^{-1/2} .\]
It follows that 
\begin{equation}
  \label{eq:restfix}
  \frac{\cO_F\otimes (K^{\vir}_\XX|_F)^{1/2}}{\Eul_T(\nu_F)}
  =\cO_F \otimes (K^{\vir}_F)^{1/2}\otimes \frac{(\det
	\nu_F)^{-1/2}}{\Eul_T(\nu_F)}.
\end{equation}
By considering the decomposition 
\begin{equation}\label{eq:isotypical}
\nu_F=\bigoplus_i z^{\mu_i} \nu_{F,i}, 
\end{equation}
in isotypical components, we have 
\[
  \Eul_{T}(\nu_F)= \prod_i \Eul_T( z^{\mu_i} \nu_{F,i})=
  \prod_{i,j}
  (1-z^{-\mu_i} x_{i,j}^{-1})
\]
where \(x_{i,j}\) are the Chern
roots of \(\nu_{F,i}\). Since \( (\det \nu_F)^{1/2}=\prod_{i,j}
(z^{\mu_i }x_{ij})^{1/2}\) we have 
\begin{equation}\label{eq:symaux}
  \Eul_T (\nu_F)^{-1}  (\det \nu_F)^{-1/2}=
  \prod_{i,j}  \widehat{A}(z^{\mu_i} x_{ij})^{-1} =
  \widehat{A}(\nu_{F})^{-1}.
\end{equation}

For our arguments below, we need to discuss 
the asymptotic behaviour of
\(\widehat{A}(\nu_F)\). Consider the decomposition of
\(\nu_F\) as in 
\eqref{eq:isotypical} and the Euler class expansion
\eqref{eq:eulerexp} for each of its isotypical components.
Thus
\[
  \widehat{A}(\nu_{F})=\frac{(\det \nu_F)^{-1/2}}{\prod_i
	(1-z^{-\mu_i})^{r_i}(1+N_i)} 
\]
with \(N_i\in K(F)\otimes K^{\loc}_T(\pt)\)
as in \eqref{eq:sumnilpotents}. Therefore
\begin{equation}
	\label{eq:ahat}
	\widehat{A}(\nu_F) = \prod_i \left(
	\frac{z^{-\mu_i/2}}{1-z^{-\mu_i}}\right)^{r_i} \cdot
	O(z)=\prod_i \widehat{A}(\C_{\mu_i})^{r_i} \cdot
	O(z),
\end{equation}
where \(\C_{\mu_i}\)
is the representation with weight \(\mu_i\)
and \(O(z)\) is a term that converges to zero as \(z^{\pm 1} \to 0 \).

\subsubsection{Symmetrised wall-crossing.}

We can define symmetric versions of the gauged K-theoretic
Gromov-Witten potentials previously studied by considering Euler
characteristics with respect to \(\widehat{\cO}^\vir\).
  In the following, we add a hat to any
expression whose definition now uses \(\widehat{\cO}^\vir\)
rather than \(\cO^\vir\).
The proof of the quantum Kalkman formula in Theorem \ref{kalk3} relied
on virtual localisation. If instead we use localisation for the
symmetrised virtual structure sheaf we obtain the following:
\begin{equation} 
  \widehat{\tau}_{X \qu_+ G}\ \widehat{\kappa}_{X,+}^G -
  \widehat{\tau}_{X
  \qu_- G}\ \widehat{\kappa}_{X,-}^G = \lim_{\rho \to 0}
  \widehat{\tau}_{X,\zeta,0}.
\end{equation}

The symmetrised virtual structure sheaves satisfy good properties
under the action of the Picard stack on the locus of reducible maps.
Let
\[
  \Pic(C) := \Hom(C, B\C^\times)
\]
denote the Picard stack of line bundles on \(C\).
The Lie algebra \(\g_\zeta\)
has a distinguished factor generated by \(\zeta\),
and using an invariant metric the weight lattice of \(\g_\zeta\)
has a distinguished factor \(\Z\)
given by its intersection with the Lie algebra of \(\C^\times_\zeta\).
After passing to a finite cover, we may assume that
\(G_\zeta \cong (G_\zeta/\Ct_\zeta) \times \Ct_\zeta\).
The Picard stack \(\Pic(C)\)
acts on the moduli stack of reducible gauged maps
\(\ol{\M}_n^{G_\zeta}(C,X,\LL_0,\zeta)\)
as follows.  Recall that a reducible gauged map \((P,\hat{C},u)\),
where \(P \to C\)
is a \(G\)-bundle
and \(u: \hat{C} \to P(X)\)
is \(\zeta\)-fixed.
The restriction of \(u\)
to the principal component of \(C\)
maps into the fixed point locus \(X^\zeta\).
For \(Q\)
an object of \(\Pic(C)\)
and \((P,\hat{C},u)\)
an object of \(\ol{\M}_n^{G_\zeta}(C,X,\LL_0,\zeta)\) define
\begin{equation} \label{picact}
 Q (P,\hat{C},u) := 
 (P \times_{\C^\times_\zeta} Q, \hat{C}, v) 
\end{equation} 
where the section \(v\)
is defined as follows: We have an identification of bundles
\((P \times_{\C^\times_\zeta} Q) (X^\zeta) \cong P(X^\zeta)\)
since the action of \(\C^\times_\zeta\)
on \(X^\zeta\)
is trivial. Hence the principal component \(u_0\),
which is a section of \(P(X^\zeta)\)
induces the corresponding section \(v_0\)
of \(( P \times_{\C^\times_\zeta} Q) (X^\zeta)\).
Each bubble component of \(u\)
maps into a fiber of \(P(X)\),
canonically identified with \(X\)
up to the action of \(G_\zeta\).
So $u$ induces the corresponding bubble map of \(v\)
into a fiber of \(( P \times_{\C^\times_\zeta} Q) (X)\),
well-defined up to isomorphism. Note that if the degree of
\((P,\hat{C},u)\)
is \(d\) the degree of \(Q(P,\hat{C},u)\) is \(d+c_1(Q)\).

The Picard action preserves semistable loci in the large area
limit. Indeed, because the Mundet weights \(\mu_M(\sigma,\lambda)\)
approach the Hilbert-Mumford weight \(\mu_{HM}(\sigma,\lambda)\)
as \(\rho \to 0\),
the limiting Mundet weight is unchanged by the shift by \(Q\)
in the limit \(\rho \to 0\)
and so Mundet semistability is preserved. Thus for \(\rho^{-1}\)
sufficiently large the action of an object \(Q\)
of \(\Pic(C)\) induces an isomorphism
\begin{equation} \label{translate} \cS^\delta:
  \ol{\M}_n^{G_\zeta}(C,X,\LL_0,\zeta,d) \to
  \ol{\M}_n^{G_\zeta}(C,X,\LL_0,\zeta,d + \delta)
\end{equation}
where \(\delta = c_1(Q)\). 

\begin{lemma}\label{pic} {\rm (Action of the Picard stack on fixed
    loci)} The action of \(Pic(C)\)
  in \eqref{translate} induces isomorphisms of the relative
  obstruction theories on $\ol{\M}_n^{G_\zeta}(C,X,\LL_0,\zeta,d) $
  preserving the restriction of symmetrised virtual structure sheaves
  $\hat{\cO}_{\ol{\M}_n(C,X,\LL_0)}^{\vir}$, and preserving the class
  \(\ev^* \alpha\) for any \(\alpha \in K_G^0(X)^n\).
\end{lemma}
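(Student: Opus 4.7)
The plan is to analyse the Picard twist $\cS^\delta \colon (P,\hat C, u) \mapsto (P \times_{\C^\times_\zeta} Q, \hat C, v)$, with $c_1(Q) = \delta$, by passing to the $\C^\times_\zeta$-isotypic decomposition of the normal directions to $X^\zeta$ in $X$. First I would note that at a reducible fixed map the restriction of $T_{X/G_\zeta}$ along the principal component splits canonically as
$$ T_{X^\zeta/(G_\zeta/\C^\times_\zeta)} \ \oplus \ \bigoplus_j \nu_j, $$
with $\nu_j$ the weight-$\mu_j$ summand of the normal bundle of $X^\zeta$ in $X$. After taking the cone that cancels the infinitesimal $\C^\times_\zeta$-action (as in Lemma \ref{qot}), the relative complex $Rp_* e^* T_{X/G_\zeta}^\dual$ inherits a $\C^\times_\zeta$-fixed summand together with moving summands $Rp_*(e^* \nu_j^\dual)$ of weight $-\mu_j$.

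Second, I would check that $\cS^\delta$ identifies these summands canonically between source and target moduli. The fixed summand is transported identically because $(P \times_{\C^\times_\zeta} Q)(X^\zeta) = P(X^\zeta)$ canonically (as $\C^\times_\zeta$ acts trivially on $X^\zeta$), while the weight-$\mu_j$ piece $Rp_*(e^* \nu_j^\dual)$ is replaced by $Rp_*(e^* \nu_j^\dual \otimes p^* Q^{-\mu_j})$. This yields the claimed isomorphism of relative obstruction theories. Every marking lies on the principal component and evaluates into $P(X^\zeta) = (P \times_{\C^\times_\zeta} Q)(X^\zeta)$, so $\ev^* \alpha$ is transported to itself for any $\alpha \in K_G^0(X)^n$.

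Third, to verify preservation of $\widehat{\cO}^\vir = \cO^\vir \otimes (K^\vir)^{1/2}$, one compares the Koszul twist on $\cO^\vir$ induced by $Rp_*(e^* \nu_j^\dual \otimes p^* Q^{-\mu_j})$ versus $Rp_*(e^* \nu_j^\dual)$ with the determinant twist on $(K^\vir)^{1/2}$. Under the crepant hypothesis (Definition \ref{crepantdef}) the weights $\mu_j$ appear in $\pm$ pairs with matching multiplicities; pairing isotypic contributions for $+\mu$ and $-\mu$ converts the formally half-integer powers of $Q$ in $(K^\vir)^{1/2}$ into integral tensor powers, and a direct computation on each paired block shows that the net twist of $\widehat{\cO}^\vir$ is trivial.

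The main obstacle will be the last step: making rigorous sense of $(K^\vir)^{1/2}$, which a priori lives only on a cover $\hat T \to T$, and verifying that the twist data under $\cS^\delta$ lifts unambiguously so that the crepant pairing of weights yields a genuine line bundle whose contribution exactly cancels the Koszul twist from $\cO^\vir$. Without the crepant hypothesis one could only expect preservation up to a $\delta$-dependent line bundle, so the hypothesis is essential at precisely this point.
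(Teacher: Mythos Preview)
Your proposal misidentifies what the lemma is about, and as a consequence invokes the crepant hypothesis where the paper does not need it. The relative obstruction theory on the fixed locus $\ol{\M}_n^{G_\zeta}(C,X,\LL_0,\zeta,d)$ is \emph{only} the $\C^\times_\zeta$-invariant part of $Rp_* e^* T_{X/G_\zeta}^\dual$ (after taking the cone killing $\C\zeta$); the moving summands $Rp_*(e^*\nu_j^\dual)$ belong to the virtual normal complex, not to the obstruction theory of the fixed locus. On the principal component the invariant part is $Rp_* e^* T_{X^\zeta/G_\zeta}^\dual$, which is untouched by tensoring with a $\C^\times_\zeta$-bundle $Q$, and on bubble components the bundle is already trivial (bubbles map to points of $C$), so again nothing changes. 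Hence the obstruction theory of the fixed locus, and therefore both $\cO^\vir$ and its symmetrisation $\widehat{\cO}^\vir$, are preserved by $\cS^\delta$ with no hypothesis on the weights whatsoever. Your step~3 is fighting a problem that does not exist here; the discrepancy you correctly anticipate in the moving pieces is precisely the content of the \emph{next} lemma in the paper, where the ratio of $\widehat A$-classes of the virtual normal complexes is computed, and the crepant hypothesis only enters later in the proof of Theorem~\ref{cytype} to make that ratio constant.

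Two smaller issues: your claim that ``every marking lies on the principal component'' is false in general (markings may sit on bubbles), but this is harmless since bubbles are unaffected by the twist; and your step~2 is internally inconsistent, asserting an isomorphism of obstruction theories while simultaneously recording a nontrivial twist on the moving summands that you then have to cancel in step~3.
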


\begin{proof} The action of \(\Pic(C)\) lifts to the universal
  curves, denoted by the same notation. Since the relative
  part of the obstruction theory on
  \(\ol{\M}_n^{G_\zeta}(C,X,\LL_0,\zeta,d)\) is the
  \(\C^\times_\zeta\)-invariant part of \(Rp_* e^*
  T_{X/G_\zeta}^\dual\) up to the factor \(\C\zeta\), 
  the isomorphism
  preserves the relative obstruction theories and so the
  virtual structure sheaves. (Note that
  on the principal component, the invariant part is \(Rp_*
  e^* T_{X^\zeta/G_\zeta}^\dual\) which is unchanged by the tensor
  product by \(\C^\times_\zeta\)-bundles, while on the bubble
  components \(Rp_* e^* T_{X/G}^\dual\) is unchanged by the tensor
  product by \(Q\).) Since the evaluation map is unchanged by
  pull-back by \(\cS^\delta\) (up to isomorphism given by
  twisting by \(Q\)), the class \(\ev^* \alpha\) is preserved.
\end{proof}

\begin{theorem}[Wall-crossing for crepant birational transformations of git type]
\label{cytype} 
Suppose that \(X,G, \LL_\pm\)
define a simple wall-crossing, and \(C\)
has genus zero. If the wall-crossings is simply crepant then
\[ 
  \widehat{\tau}_{X \qu_- G} 
\circ  \widehat{\kappa}_{X,-}^G  
 \underset{a.e.}{=} \widehat{\tau}_{X \qu_+ G}
\circ \widehat{\kappa}_{X,+}^G  
\]
almost everywhere (a.e.) in the quantum parameter \(q\).
\end{theorem}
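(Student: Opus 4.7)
The plan is to deduce the theorem from a symmetrised version of the quantum Kalkman-Metzler formula combined with a sign-invariance of the symmetrised Euler class that is forced by the crepant hypothesis. First, I would re-run the proofs of Theorems \ref{vkalkman} and \ref{gwall} using the shifted virtual structure sheaf $\widehat{\cO}^\vir = \cO^\vir \otimes (K^\vir)^{1/2}$ in place of $\cO^\vir$. The refined virtual localisation identity \eqref{eq:symvlociso} shows the shift is compatible with virtual localisation, so the arguments go through verbatim and yield
\[ \widehat{\tau}_{X \qu_+ G} \circ \widehat{\kappa}_{X,+}^G - \widehat{\tau}_{X \qu_- G} \circ \widehat{\kappa}_{X,-}^G = \lim_{\rho \to 0} \widehat{\tau}_{X,\zeta,0}, \]
whose right-hand side, by \eqref{eq:symaux}, integrates $\ev^*\alpha \cdot \widehat{A}(\nu_0)^{-1}$ against the reducible gauged-map moduli. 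The theorem therefore reduces to showing this limit vanishes almost everywhere in $q$.

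Next I would exploit the crepant hypothesis to establish a $z \leftrightarrow z^{-1}$ symmetry of the integrand. The elementary identity $\widehat{A}(\C_{-\mu}) = -\widehat{A}(\C_\mu)$ combined with the crepant pairing $\{(\mu, r_\mu), (-\mu, r_\mu)\}$ of $\C^\times_\zeta$-weights on $\nu_0$ shows that the leading factor $\prod_i \widehat{A}(\C_{\mu_i})^{r_i}$ in the asymptotic expansion \eqref{eq:ahat} is invariant under $z \leftrightarrow z^{-1}$: the total sign is $(-1)^{\sum_i r_i}$, which equals $+1$ because the normal rank is automatically even (there are no weight-zero summands on a $\C^\times_\zeta$-fixed component). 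Since the residue \eqref{residuedef} is the difference of the constant terms of the $z = 0$ and $z = \infty$ expansions, any $z \leftrightarrow z^{-1}$-symmetric class has vanishing residue, so this step eliminates the contribution of the leading weight factor.

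To promote this vanishing from the leading weights through the Chern-root corrections carried by the isotypical bundles $\nu_{0,i}$, I would invoke the $\Pic(C)$-action of Lemma \ref{pic}. In the large-area limit $\rho \to 0$ the Mundet weight reduces to the Hilbert-Mumford weight, which is insensitive to the twist \eqref{picact}, so $\Pic(C)$ acts as a genuine symmetry on the $\zeta$-reducible moduli preserving $\widehat{\cO}^\vir$ and $\ev^*\alpha$ while translating the equivariant degree by $\delta \in H_2^G(X,\Z)$ and uniformly shifting the $\C^\times_\zeta$-weights of $\nu_0$. Averaging each residue over a $\Pic(C)$-orbit trivialises the Chern-root dependence and reduces the computation to the leading weight factors, where the sign-invariance of the previous paragraph forces the sum over each orbit to vanish.

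The main obstacle is controlling the convergence of the $\Pic(C)$-averaging: a formal sum of translates $\sum_\delta q^{d+\delta}$ does not lie in $\Lambda_X^G$ but only in the larger space $\widetilde{\Lambda}_X^G$ of $\Q$-valued functions on $H_2^G(X,\Q)$, so the orbit-summation converges only in the sense of distributions and the final identity holds only as an equality in $\widetilde{\Lambda}_X^G$. The exceptional locus on which the identity can fail is the discrete set of $q$-values where resonances between the Novikov exponents and the central weight of $\zeta$ produce delta-function contributions; this is the K-theoretic analogue of the branch locus governing analytic continuation in the cohomological crepant transformation conjecture, and it is the reason why the theorem only asserts equality $q$-almost-everywhere.
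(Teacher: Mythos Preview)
Your outline correctly identifies the first and last moves: one does rerun the wall-crossing argument with the shifted sheaf $\widehat{\cO}^\vir$ to obtain the symmetrised Kalkman--Metzler identity, and one does conclude by recognising a $\Pic(C)$-orbit sum $\sum_{r\in\Z} q^{d+r\delta}(\cdots)$ as a distribution supported at isolated values of $q$.  The middle step, however, is where your proposal diverges from the actual argument and acquires a gap.

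Your proposed mechanism is a $z\leftrightarrow z^{-1}$ symmetry of the leading weight factor $\prod_i \widehat{A}(\C_{\mu_i})^{r_i}$, which you then hope to propagate to the full integrand via ``$\Pic(C)$-averaging''.  This does not work.  The residue is taken of the entire class $\ev^*\alpha\otimes\widehat{A}(\nu_0)$, and neither $\ev^*\alpha$ nor the Chern-root corrections in $\widehat{A}(\nu_0)=\prod_{i,j}\widehat{A}(z^{\mu_i}x_{ij})$ are invariant under $z\leftrightarrow z^{-1}$: the crepant hypothesis pairs the \emph{weights} $\mu_i\leftrightarrow -\mu_i$ with equal ranks, but says nothing about the Chern roots of the paired bundles $\nu_i$ and $\nu_{-i}$.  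So the symmetry you invoke holds only for the coarsest piece of the integrand, and there is no mechanism by which averaging over degree-shifts would ``trivialise'' the remaining asymmetric pieces.

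The paper's argument uses the $\Pic(C)$-action quite differently.  Via Grothendieck--Riemann--Roch one computes the \emph{ratio} of symmetrised normal classes under the twist $\cS^{r\delta}$:
\[
\frac{\widehat{A}\bigl((Rp_*e^*T_{X/G}^\dual)^+\bigr)}{\widehat{A}\bigl(\cS^{r\delta,*}(Rp_*e^*T_{X/G}^\dual)^+\bigr)}
=\Bigl(\prod_i\widehat{A}(\nu_i)^{\mu_i}\Bigr)^r
=\bigl(\Delta(z)\,(1+N)\bigr)^r,
\]
with $N$ nilpotent.  The crepant hypothesis enters here, and only here: it forces the leading $z$-dependent factor $\Delta(z)$ to be a \emph{constant} in $z$ (the powers of $z$ in numerator and denominator cancel because $\sum_i \mu_i^2 r_i$ receives equal contributions from $\pm\mu$).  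Consequently the residue $\widehat{\tau}_{X,\zeta,d+r\delta,0}(\alpha)$ differs from the degree-$d$ contribution only through the nilpotent factor $(1+N)^r$, whose binomial expansion is polynomial in $r$.  The orbit sum is therefore $\sum_{r\in\Z}q^{d+r\delta}\chi_0(r)$ with $\chi_0$ polynomial, i.e.\ a finite combination of derivatives of the delta function at $q^\delta=1$, hence zero almost everywhere.  No $z\leftrightarrow z^{-1}$ symmetry is invoked, and the individual residues need not vanish---only their weighted sum does, as a distribution.
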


The following remark explains precisely in what sense
\emph{a.e.} is used in Theorem \ref{cytype}.

\begin{remark} \label{distrib} In the Schwartz theory of distributions
  (H\"ormander \cite{ho:an}) denote by \(\mT(S^1) \)
  the space of {\em tempered distributions}.  Fourier transform
  identifies \(\mT(S^1)\)
  with the space of functions on \(\Z\)
  with polynomial growth.  The variable \(q\)
  is a coordinate on the punctured plane \(\C^\times\)
  and any formal power series in \(q,q^{-1}\)
  defines a distribution on \(S^1\),
  which is tempered if the coefficient of \(q^d\)
  has polynomial growth in \(d\).
  In particular the series \(\sum_{d \in \Z} q^d\)
  is the delta function at \(q = 1\),
  and its Fourier transform is the constant function with value \(1\).
 Any distribution of the form
\(\sum_{d \in \Z} f(d) q^d\), for \(f(d)\) polynomial, is
a sum of derivatives of the delta function
(since
Fourier transform takes multiplication to
differentiation) and so is almost everywhere zero.
\end{remark}

We study the dependence of the fixed point contributions
\(\tau_{X,\zeta, d,0} \) with respect to the Picard action
defined in \eqref{picact}.  Suppose that \(Q\) is a
\(\C^\times_\zeta\)-bundle of first Chern class the generator
of \(H^2(C)\), after the identification \(\C^\times_\zeta \to
\C^\times\).   Denote the corresponding class in
\(H_2^{G_\zeta}(X^\zeta)\) by \(\delta\).  Consider the action
of the \(\Z\)-subgroup \(\Z_Q\subset \Pic(C)\) generated by
\(Q\). The contribution of any component
\(\ol{\M}_n^{G_\zeta}(C,X,\LL_0,\zeta,d)\) of class \(d \in
H_2^G(X)\) differs from that from the component induced by
acting by \(Q^{\otimes r}, r\in \Z_Q\), of class \(d + r
\delta\), by the ratio of symmetrised Euler classes of the
moving parts of the virtual normal complexes
\begin{equation} \label{eulerdiff}
  \widehat{A}( (Rp_* e^*
  T_{X/G}^{\dual})^+) \widehat{A}(
  \cS^{r\delta,*} (Rp_* e^* T_{X/G}^{\dual})^+)^{-1} 
\end{equation}

As before, denote by \(\nu_i\)
be the subbundle of \(\nu_{X^{\zeta,0}}\) of weight \(\mu_i\).

\begin{lemma} The $\widehat{A}$ classes relate by
\[  \widehat{A}( (Rp_* e^* T_{X/G}^{\dual})^+) = \widehat{A}(
  \cS^{r\delta,*} (Rp_* e^* T_{X/G}^{\dual})^+) \left( \prod_{i}
    \widehat{A}\left(\nu_i\right)^{\mu_i}\right)^r.   \] 
\end{lemma}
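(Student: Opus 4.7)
The plan is to compute the K-theory difference $(Rp_*e^*T_{X/G}^\dual)^+ - \cS^{r\delta,*}(Rp_*e^*T_{X/G}^\dual)^+$ explicitly and then apply the multiplicativity of $\widehat{A}$. I would first decompose the moving part of the virtual tangent complex by $\C^\times_\zeta$-weight as $\bigoplus_i Rp_*(e^*\nu_i^\dual)$, where $\nu_i$ is the weight-$\mu_i$ summand of $\nu_{X^{\zeta,0}}$. The bubble components of $\hat{C}$ attach to the principal component at nodes where local trivializations of $P$ can be chosen, so the Picard twist by $Q^{\otimes r}$ has no effect on the bubble maps up to isomorphism (this is essentially already the content of Lemma \ref{pic}); hence bubbles contribute identically on both sides of the identity and can be ignored.

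On the principal component $C\cong\P^1$, replacing $P$ by $P\times_{\C^\times_\zeta}Q^{\otimes r}$ tensors the associated bundle $e^*\nu_i$ by $\pi^*L^{r\mu_i}$, where $L = \cO_C(1)$ is the line bundle associated to $Q$, since $\C^\times_\zeta$ acts on $\nu_i$ with weight $\mu_i$ and trivially on $X^\zeta$. Consequently
$$\cS^{r\delta,*}(e^*\nu_i^\dual) \cong e^*\nu_i^\dual\otimes \pi^*L^{-r\mu_i}$$
on the universal principal component. Since that universal principal component is a trivial $C$-family over the moduli stack, the identity $K(\P^1\times Y) = K(Y)[t]/(t-1)^2$ with $t = [\cO(1)]$ gives, after writing any class as $\cF_0 + \cF_1(t-1)$,
$$Rp_*(\cF\otimes\pi^*L^m) - Rp_*(\cF) = m\cdot \cF_0 = m\cdot s^*\cF$$
for any section $s$ of $p$ and any locally free $\cF$. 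Applied to $\cF = e^*\nu_i^\dual$ with $m = -r\mu_i$ and summed over $i$, this yields
$$(Rp_*e^*T^\dual_{X/G})^+ - \cS^{r\delta,*}(Rp_*e^*T^\dual_{X/G})^+ = \sum_i r\mu_i\cdot\ev^*\nu_i^\dual$$
in equivariant K-theory on the moduli of degree $d$.

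Applying $\widehat{A}$ to both sides, using $\widehat{A}(V\oplus W)=\widehat{A}(V)\widehat{A}(W)$, $\widehat{A}(n\cdot V)=\widehat{A}(V)^n$, and the symmetry $\widehat{A}(V^\dual)=(-1)^{\rk V}\widehat{A}(V)$ (whose signs either cancel in the final product or are absorbed by the simply-crepant pairing of weights) then yields the claimed identity. The main obstacle I anticipate is verifying the bubble cancellation rigorously at the level of the perfect obstruction theory rather than just at the topological level: one must check that the relative perfect obstruction theory of Proposition \ref{masterprop} splits cleanly between principal-component and bubble contributions compatibly with the weight decomposition. This holds because the twist by $Q^{\otimes r}$ is pulled back from $C$ and therefore affects only the part of $Rp_*e^*T_{X/G}^\dual$ supported on the principal component, so the bubble part of the relative obstruction complex is strictly invariant under $\cS^{r\delta,*}$.
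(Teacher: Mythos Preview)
Your proposal is correct and follows the same overall architecture as the paper's proof: both arguments first observe that the bubble contributions cancel (since the twist by $Q^{\otimes r}$ is pulled back from $C$ and the bubbles map to fibers), reducing to a computation on the principal component, and then compute the resulting K-class difference as $\sum_i r\mu_i$ copies of (the pullback of) $\nu_i$.

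The genuine difference is in how the principal-component pushforward is computed. The paper invokes Grothendieck--Riemann--Roch for Deligne--Mumford stacks (citing To\"en and Edidin) to evaluate $\Ch(Rp_{0,*}(\cdot))$ and then compares Chern characters. You instead exploit the fact that the universal principal component is literally a trivial $\P^1$-family $C\times\M$, so that $K(\P^1\times\M)=K(\M)[t]/(t-1)^2$ with $t=[\mathcal{O}(1)]$, and derive the identity $Rp_*(\cF\otimes\pi^*L^m)-Rp_*\cF = m\cdot s^*\cF$ by direct inspection. This is more elementary and avoids the stacky GRR machinery entirely; it works precisely because the paper already establishes (in the paragraph introducing $p_0$) that the principal part of the universal curve is the trivial product.

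One caveat: your final step appeals to the simply-crepant hypothesis to absorb the signs coming from $\widehat{A}(V^\dual)=(-1)^{\rk V}\widehat{A}(V)$, but the lemma as stated does not assume simple crepancy. The paper's proof is also somewhat loose in passing between $T_{X/G}$ and $T_{X/G}^\dual$ in its last displayed equation, and since the lemma is only applied inside the proof of Theorem~\ref{cytype} (where simple crepancy is assumed and forces $\sum_i r_i\mu_i=0$), neither imprecision causes trouble downstream. If you want the lemma to stand on its own, you should track the dual more carefully rather than deferring to the crepant pairing.
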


\begin{proof}
  The Grothendieck-Riemann-Roch allows a computation of the Chern
  characters of the (representable) push-forwards.  Consider the isotypical
  decomposition into \(\C^\times\)-bundles
  of the normal bundle to the fixed component \(X^{\zeta,0}\) in \(X\)
\[ \nu_{X^{\zeta,0}}  = \bigoplus_{i=1}^k
\nu_{i} 
\]
where \(\C^\times\)
acts on \(\nu_{i}\)
with non-zero weight \(\mu_i \in \Z\).
By the discussion above \(e^* T^\dual_{X/G}\)
is canonically isomorphic to \(\cS^{r\delta} (e^* T^\dual_{X/G})\)
on the bubble components, since the \(G\)-bundles
are trivial on those components.  Because the pull-back complexes are
isomorphic on the bubble components, the difference
\((e^* T^\dual_{X/G})^+ - \cS^{r\delta,*} (e^* T^\dual_{X/G})^+ \)
is the pullback of the difference of the restrictions to the principal
part of the universal curve, that is, the projection on the second
factor
\[p_0: C \times \ol{\M}_n^{G_\zeta}(C,X,\LL_0,\zeta) \to
\ol{\M}_n^{G_\zeta}(C,X,\LL_0,\zeta) .\]
These restrictions are given  by 
\begin{eqnarray*}
  (e^* T_{X/G})^{+,\on{prin}} &\cong& \bigoplus_{i}^{}
e^* \nu_{X^{\zeta,t},i} \\   \cS^{r\delta,*} (e^*
T_{X/G})^{+,\on{prin}} &\cong& \bigoplus_{i} e^*
\nu_{X^{\zeta,t},i} \otimes (e_C^* Q
\times_{\C^\times_\zeta} \C_{r\mu_i}) 
\end{eqnarray*}
where \(e_C\) is the map from the universal curve to \(C\).  The
projection \(p_0\) is a representable morphism of stacks given
as global quotients.  Let 
\[\sigma: \ol{\M}_n^{G_\zeta}(C,X,\LL_0,\zeta) \to C \times  
\ol{\M}_n^{G_\zeta}(C,X,\LL_0,\zeta)\] 
be a constant section of \(p_0\),
so that \(c_1(\sigma^* e^* \nu_{X^{\zeta,0},i})\)
is the ``horizontal'' part of \(c_1(\nu_{X^{\zeta,0},i})\).
By Grothendieck-Riemann-Roch for such
stacks \cite{toen:rr}, \cite{ed:rr}
\begin{eqnarray*} \Td_\M  && \Ch(\cS^{r\delta,*} \Ind(
  T_{X/G})^+) = p_{0,*} (\Td_{C \times \M}
  \Ch(\cS^{r\delta,*}  T_{X/G})^+) \\ &=& (1-g) + \Td_\M
  p_{0,*} \Ch(\cS^{r\delta,*}  T_{X/G})^+) \\ &=& (1-g) +
  \Td_\M p_{0,*} \sum_{i}^{} \Ch(e^*
  \nu_{X^{\zeta,t},i}) \Ch( (e_C^* Q
  \times_{\C^\times_\zeta} \C_{r\mu_i})) \\ &=& (1-g) +
  \Td_\M p_{0,*} \sum_{i}^{} \Ch(e^*
  \nu_{X^{\zeta,t},i}) (1 + r \mu_i \omega_C) \\ &=&
  p_{0,*}( \Td_{C \times \M} \Ch\left(\Ind(T_{X/G})^+ \oplus
  \bigoplus_{i}^{} (\sigma^* e^*
  \nu_{X^{\zeta,t},i})^{\oplus r \mu_i}\right) \\ &=& \Td_\M
  \Ch\left(\Ind(T_{X/G})^+ \oplus \bigoplus_{i}^{} (\sigma^*
  e^* \nu_{X^{\zeta,0},i})^{\oplus r \mu_i}\right).
\end{eqnarray*}
Hence
\begin{equation} \label{diff} \Ch(\cS^{r\delta,*} \Ind(
  T_{X/G})^+) = \Ch\left(\Ind(T_{X/G})^+ \oplus
  \bigoplus_{i=1}^{m} (\sigma^* e^* \nu_{X^{\zeta,0},i})^{\oplus
  r \mu_i}\right) 
\end{equation}
 The equality of Chern characters above implies an
 isomorphism in rational topological \(K\)-theory. 
 The difference in Euler classes
 \eqref{eulerdiff} is therefore given by the Euler class of
 the last summand in \eqref{diff}
 \begin{eqnarray*} \frac{\widehat{A}(
  (Rp_* e^* T_{X/G}^\dual)^+)}{
	\widehat{A}( \cS^{r\delta,*}
	(Rp_* e^* T_{X/G}^\dual)^+)} 
&=&
\widehat{A}\left(
	\bigoplus_{i} (\sigma^* e^*
	(\nu_i))^{\oplus \mu_ir}
	\right) \\ 
&=& 
\left(
\prod_{i}
\widehat{A}\left(\nu_i\right)^{\mu_i}\right)^r. \qedhere
\end{eqnarray*}
\end{proof}

\begin{proof}[Proof of Theorem \ref{cytype}]
  Using the expansion of Euler classes as in \eqref{eq:ahat} and
  \eqref{eq:eulerexp}, we have that by setting \(r_i = \rank \nu_i\)
(on each component of the inertia stack, in the orbifold case)

\begin{eqnarray*}
\prod_{i}
\widehat{A}\left(\nu_i\right)^{-\mu_i }
&=& \prod_{i} ( \zeta_i^{1/2} z^{\mu_i/2} - \zeta_i^{-1/2} z^{-\mu_i/2})
^{-\mu_i r_i }  (1+N)
\end{eqnarray*}
where $\zeta_i$ are the roots of unity appearing in orbifold
Riemann-Roch \cite{to:rr} and $N$ is nilpotent.  Let \(S_-, S_+\)
denote the indices for which \(\mu_i\)
is negative and respectively positive. Define
\begin{equation}\label{eq:balanced}
  \Delta(z):=\frac{\prod_{j\in S_-} ( \zeta_j^{1/2}  z^{\mu_j/2} -
  \zeta_j^{-1/2} z^{-\mu_j/2})^{-\mu_j r_j }}{\prod_{i\in
    S_+}(\zeta_i^{1/2} z^{\mu_i/2} -
  \zeta_i^{-1/2}  z ^{-\mu_i/2})^{ \mu_i r_i} } 
\end{equation}
We can rewrite the difference
\begin{eqnarray*}
	\label{eq:ahatfinal}
  \left(
  \prod_{i}
  \widehat{A}\left(\nu_i\right)^{-\mu_i}\right)^r
  &=& (\Delta(z)(1+N))^r
\end{eqnarray*}
By the crepant wall-crossing assumption \ref{crepantdef}, the function
$\Delta(z)$ is a constant, denoted $\Delta$.  Summing the terms from
\(\cS^{r\delta}, r \in \Z\)
we obtain that the wall-crossing contribution is
\begin{equation} \label{type} \sum_{r \in \Z} q^{d + \delta r}
  \widehat{\tau}_{X,\zeta, d + \delta r,0}(\alpha) =
  \sum_{r \in \Z} q^{d + r \delta}.
  \chi_0(r)
\end{equation}
where $\chi_0(r)$ is polynomial in $r$, since each $N$ is nilpotent
and the binomial coefficients from the expansion of $(1+N)^r$ are
polynomial.  Now
\[
\sum_{r \in \Z} q^{\delta r} \in \mT(S^1)
\]
is a delta function and it vanishes almost everywhere in
\(q^{\delta}\),
see Remark \ref{distrib}.  Since $\chi_0(r)$ is polynomial,
\eqref{type} is the derivative of a delta function. Since
\[ \widehat{\kappa}_X^{G,+} \widehat{\tau}_{X \qu_+ G} -
\widehat{\kappa}_X^{G,-} \widehat{\tau}_{X \qu_- G} \] 
is a sum of wall-crossing terms of the type in \eqref{type}, this
completes the proof of Theorem \ref{cytype}.
\end{proof}

\section{Abelianization}

In this section we compare the K-theoretic Gromov-Witten invariants of
a git quotient with the quotient by a maximal torus, along the lines
of the case of quantum cohomology investigated by
Bertram-Ciocan-Fontanine-Kim \cite{be:qu} and
Ciocan-Fontanine-Kim-Sabbah \cite{ciocan:abnonab}.  The analogous
question for K-theoretic $I$-functions of git quotients was already
considered in Taipale \cite{taipale} as well as Wen \cite{wen} and,
around the same time as the first draft of this paper, Jockers, Mayr,
Ninad, and Tabler \cite{jockers}.

Recall that the equivariant cohomology may be identified with the 
Weyl-invariant equivariant cohomology for  the action of a maximal torus.  We assume
that \(G\)
is a connected complex reductive group. Choose a maximal torus \(T\)
and \(W\)
its Weyl group.  By a theorem of Harada-Landweber-Sjamaar
\cite[Theorem 4.9(ii), Section 6]{hls:kt} if \(X\)
is either a smooth projective \(G\)-variety
or a \(G\)-vector
space then restriction from the action of \(G\)
to the action of the torus \(T\)
defines an isomorphism onto the space of \(W\)-invariants
\[ 
  \on{\Restr}^G_T: K_G^0(X) \cong K_T^0(X)^W 
\]
for either the topological or algebraic \(K\)-cohomology.
Given a polarisation \(\LL\to X\)
of the \(G\)
action, consider the naturally induced \(T\)
polarisation on $X$ 
so that 
\[ X^{\ss,G}(\LL)\subset X^{\ss,T}(\LL). \]
We assume from now in this section that \(QK_G^0(X)\)
denotes the algebraic equivariant quantum \(K\)-cohomology.
We relate the K-theoretic potentials of the two geometric invariant
theory quotients \(X\qu G,\)
and the \emph{abelian} quotient \(X\qu T\).
Let \(\nu_{\g/\t}\)
denote the bundle over \(X\qu T\)
induced from the trivial bundle over \(X\)
with fibre \(\g/\t\).  Consider the graph potential
\[ \tau_{X\qu T}:QK_T^0(X)\to \Lambda_X^T \] 
twisted by the Euler class of the index bundle associated to
\(\g/\t\):
\begin{multline} 
  \tau_{X\qu T} (\alpha,q) := \\ \sum_{  n \ge 0 } \sum_{d \in
                                      H_2^G(X,\Q) }  
  \chi^{\vir}\left( \ol{\M}_n(C,X\qu T,d);  
  \ev^*\alpha^n \Eul (\Ind(\g/\t)) \right)  
  \frac{q^d}{n!}.
\end{multline}
Similarly \(T\)-gauged
potential \(\tau^T_X\)
and the Kirwan map \(\kappa_{X,T}\)
will from now on denote the maps with the Euler twist above.  The
natural map \(H_2^T(X,\Q) \to H_2^G(X,\Q)\) induces a map of Novikov rings
\[
  \pi_T^{G}: \Lambda_X^T\to \Lambda_X^G,
  \sum_{d\in H_2^T(X)} c_q q^d\mapsto \sum_{d\in H_2^G(X)}
  c_q q^{\pi(d)}.
\]
By abuse of notation, denote again by
\[
  \Restr: QK_G^0(X)\to QK_T^0(X)
\]
the map induced by the restriction map above and the inclusion of the
Weyl invariants
\(\Lambda_X^G\cong (\Lambda_X^T)^W\subset \Lambda_X^T.\)
As in Martin \cite{mar:sy} the restriction map
\[
  \Restr_G^T: K(X\qu T)^W \to K(X\qu G)
\]
is surjective and has kernel is the
annihilator of \(\Eul(\g/\t)\),  the set
of classes that vanish when multiplied by \(\Eul(\g/\t)\).
This map naturally  extends to a map 
\[ \Restr_G^T:   QK(X \qu T)^W \to QK(X \qu G) \] 
by a similar definition on the twisted sectors.  On the main sector
$\Restr_G^T$ is given by restriction of a class
\[ \alpha\in K(X\qu T)=K(X^{\ss}(T)/T) \]  
to \(X^{\ss}(G)/T\)
then followed by the identification of the Weyl invariant part with
\(K(X\qu G)\)
\cite[Theorem A]{mar:sy}.  With these notations we have the following
result.

\begin{theorem} {\rm (Quantum Martin formula in quantum
    \(K\)-theory)}
  \label{qkabel} Let \(C\)
  be a smooth connected projective genus \(0\)
  curve, \(X\)
  a smooth projective or convex quasiprojective \(G\)-variety,
  and suppose that stable=semistable for \(T\)
  and \(G\)
  actions on \(X\).
  The following equality holds on the topological quantum K-theory
  \(QK_G^0(X)\):
\[ \tau_{X \qu G} \circ \kappa_{X,G} = |W|^{-1} \pi_T^G
\circ \tau_{X \qu T}^{\g/\t} \circ \kappa_{X,T}^{\g/\t} \circ \Restr_T^G
.\]
Similarly for  (J-functions) localised graph potentials
\[ \begin{array}{l}  \tau_{X\qu G,-} : QK_G^0(X)\to QK(X\qu G)[z, {z}^{-1}]] \\
 \tau_{X\qu T,-} : QK_T^0(X)\to QK(X\qu T)[z, {z}^{-1}
     ]] \end{array} \] 
we have 
 \begin{eqnarray} \nonumber
	\tau_{X \qu G,-} \circ 
\kappa_{X,G} &=&
                                                                    \tau_{X,-}^G \\ 
&=&  \label{lab}  \Restr_G^T  \circ 
\Eul(\g/\t)^{-1} \tau_{X,-}^{T,\g/\t} \circ   \Restr_T^G \\ 
& = &  \nonumber 	\Restr_G^T  \circ \Eul(\g/\t)^{-1} 
\tau_{X \qu T,-}^{\g/\t}
      \circ \kappa_{X,T}^{\g/\t} \circ \Restr_T^G. 
 \end{eqnarray} 
\end{theorem}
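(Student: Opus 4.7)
The plan is to first apply the adiabatic limit Theorem \ref{largearea} to both sides of each asserted identity, reducing the statements about compositions $\tau_{X \qu G} \circ \kappa_{X,G}$ and $\tau_{X \qu T}^{\g/\t} \circ \kappa_{X,T}^{\g/\t}$ to statements about the $G$- and $T$-gauged potentials $\tau_X^G$ and $\tau_X^{T,\g/\t}$ directly, and then to prove the gauged comparison by relating the two moduli stacks through extension of structure group. Concretely, after invoking \ref{largearea} on both sides, the first identity reduces to
\[ \tau_X^G = |W|^{-1}\, \pi_T^G \circ \tau_X^{T,\g/\t} \circ \Restr_T^G \]
on $QK_G^0(X)$, and the second identity \eqref{lab} reduces to the analogous statement for the Givental-type localized gauged potentials $\tau_{X,\pm}^G$.

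To prove the gauged identity, I would compare $\ovl{\M}^G_n(C,X,d_G)$ with $\ovl{\M}^T_n(C,X,d_T)$ via the extension-of-structure-group morphism $Q \mapsto Q \times_T G$, which defines
\[ \pi : \ovl{\M}^T_n(C,X,d_T) \to \ovl{\M}^G_n(C,X,d_G), \qquad d_G = \pi_T^G(d_T), \]
whose fibre over a $G$-gauged map $(P,u)$ is the space of $T$-reductions of $P$ compatible with $u$, i.e.\ sections of the associated $G/T$-bundle $P/T \to C$. The relative perfect obstruction theory of $\pi$ is virtually the index complex $Rp_* e^* (\g/\t)^\dual = \Ind(\g/\t)^\dual$, and the Weyl group $W$ acts freely on the generic fibre permuting its $|W|$ reductions. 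Virtual $K$-theoretic push-forward under $\pi$ then yields
\[ \pi_* \bigl( \ev^* \Restr_T^G(\alpha)^{\otimes n} \otimes [\cO^\vir] \otimes \Eul(\Ind(\g/\t)) \bigr) = |W| \cdot \ev^*\alpha^{\otimes n} \otimes [\cO^\vir]. \]
Summing over $d_T$ with fixed $\pi_T^G(d_T) = d_G$ converts the $T$-Novikov grading to the $G$-Novikov grading via $\pi_T^G$, and dividing by $|W|$ gives the desired gauged identity.

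For the localized graph potentials \eqref{lab}, the argument is identical after replacing the moduli stacks by their $\C^\times$-fixed loci $\ovl{\M}^{G}_n(\C_\pm,X,d)^{\C^\times}$ and noting that $\pi$ is $\C^\times$-equivariant, so that the virtual normal complexes $N_\pm$ pull back compatibly; the factor $|W|^{-1}$ is absorbed into $\mu_T^G$, which already extracts the Weyl-invariant part. The main obstacle will be justifying the $K$-theoretic virtual fibre integration identity for $\pi$. In the interior of the moduli stack the fibre genuinely is a space of sections of a $G/T$-bundle, but over the boundary of Mundet semistable maps (nodal domains and bubble components) the fibres degenerate, and one must verify that the two relative obstruction theories on source and target fit together sufficiently well that the virtual push-forward of the relative Thom class equals $\Eul(\Ind(\g/\t))^{-1}$ times the identity. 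This will require a Manolache-type virtual push-forward in $K$-theory and is the $K$-theoretic counterpart of the cohomological argument of Bertram-Ciocan-Fontanine-Kim \cite{be:qu} and Ciocan-Fontanine-Kim-Sabbah \cite{ciocan:abnonab}.
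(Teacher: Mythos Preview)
Your reduction via the adiabatic limit theorem is the same first step the paper takes, but after that the approaches diverge substantially.

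The paper does \emph{not} attempt to build a morphism $\pi:\ovl{\M}^T_n(C,X)\to\ovl{\M}^G_n(C,X)$ and push forward along it. Instead it varies the vortex parameter $\rho$ until reaching the small-area chamber in which every Mundet-semistable bundle on the genus-zero curve $C$ is forced to be trivial. In that chamber both $\ovl{\M}^G_n(C,X,d)$ and $\ovl{\M}^T_n(C,X,d)$ are open substacks of the \emph{same} stable-map space $\ovl{\M}_{0,n}(X,d)$ quotiented by $G$ and $T$ respectively, and the comparison reduces to the Weyl character formula via Halpern-Leistner's virtual non-abelian localization. The passage to other chambers (and hence to the $\rho\to 0$ limit needed for Theorem~\ref{largearea}) is then handled by the wall-crossing formula of Theorem~\ref{gwall}, together with an induction on $\dim G$: the wall-crossing terms involve gauged invariants for centralizers $G_\zeta/\C^\times_\zeta$ of smaller dimension, for which abelianization may be assumed.

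Your direct approach has a gap beyond the boundary-degeneration issue you already flag. The extension-of-structure-group map $Q\mapsto Q\times_T G$ does not in general carry Mundet $T$-semistable objects to Mundet $G$-semistable ones: the two stability conditions test against different families of parabolic reductions, and there is no a priori reason the semistable loci match up for arbitrary $\rho$. So $\pi$ is not even defined as a morphism of the compactified moduli stacks without further argument. The paper's small-area trick sidesteps this entirely, since once the bundle is trivial there is no stability condition on the bundle side left to compare. Your proposed route, while morally in the spirit of Martin's classical argument, would require first pinning down a chamber where $\pi$ exists and is proper, and then proving the virtual push-forward identity there; this is essentially what the paper accomplishes by the combination of small-area reduction and wall-crossing, but packaged differently.
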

 
In particular commutativity of the following diagram holds: 
\begin{equation*}
\begin{tikzcd}[every arrow/.append style={-latex}]
  QK_G^0(X) \arrow[d,"\kappa_{X,G}"'] \arrow[r,"\Restr_T^G"] &
  QK_T^0(X)^{W} \arrow[d,"\kappa_{X,T}"]\\ 
  QK(X \qu G) \arrow[d,"\tau_{X \qu G}"'] & QK(X
  \qu T) \arrow[d,"|W|^{-1} \tau_{X \qu T}"] \\
  \Lambda_X^G & \Lambda_X^T  \arrow[l,"\pi_T^G"]
\end{tikzcd}	
\end{equation*}

\begin{proof}[Sketch of proof] The argument is the same as that for
  cohomology in \cite[Section 4]{cross}.  In the case of projective
  target $X$, one can vary the vortex parameter $\rho \in \R_{> 0 }$
  until one reaches the small-area chamber in which every bundle
  $P \to C$ appearing in the vortex moduli space is trivial (in genus
  zero).  Indeed, for $\rho^{-1}$ sufficiently large the Mundet weight
  is dominated by the Ramanathan weight, and this forces the bundle to
  be semistable of vanishing Chern class and so trivial.  It follows
  that both $\ol{\M}^G(C,X,d)$ and $\ol{\M}^T(C,X,d)$ \label{CXD} are quotients of
  open loci in the moduli stacks of stable maps $\ol{\M}_{0,n}(X,d)$
  by $G$ resp. $T$.  In the small-area chamber abelianization holds
  for the localized potentials $\tau_{X,G}$ and $\tau_{X,T}$ by
  virtual non-abelian localization \cite{hal:strat}: For sufficiently
  positive equivariant vector bundles $V$ over $\ol{\M}_{0,n}(X,d)$
  denote by $V \qu G$ the restriction to $\ol{\M}^G(C,X,d)$.  Then \label{CX}
 \begin{eqnarray*} 
 \chi( \ol{\M}^G(C,X,d), V \qu G) &=& \chi(\ol{\M}_{0,n}(X,d),V)^G  \\
&=&
  |W|^{-1} \chi(\ol{\M}_{0,n}(X,d),V \otimes \Eul(\g/\t))^T \\
&=& |W|^{-1}
  \chi(\ol{\M}^T(C,X,d), ( V \otimes \Eul(\g/\t)) \qu T) \end{eqnarray*}
where the second equality holds by the Weyl character formula.  If the
stabilizers are at most one-dimensional then the wall-crossing formula
of \cite{wall} implies that the variation in the gauged Gromov-Witten
invariants $\tau_X^G$ with respect to the vortex parameter $\rho$ is
given by wall-crossing terms $\tau_{X,\zeta}^{G_\zeta}$ involving
smaller-dimensional structure group given by the centralizers
$G_\zeta, \zeta \in \g$ of one-parameter subgroups generated by
$\zeta$: For any singular value $\rho_0$ and
$\rho_\pm = \rho_0 \pm \eps$ for $\eps$ small we have
  \begin{equation} \label{wcross} \tau_{X,d}^G(\alpha,\rho_+) -
    \tau_{X,d}^G(\alpha,\rho_-) = \sum_\zeta
    \tau_{X,\zeta}^{G_\zeta}(\alpha,\rho_0) \end{equation}
  where $\tau_{X,\zeta}^{G_\zeta}$ is a moduli stack of
  $\rho_0$-semistable gauged maps fixed by the one-parameter subgroup
  generated by $\zeta$ as in Section \ref{qwall}.  After possibly
  adding a parabolic structure the stabilizers of gauged maps that are
  $\rho_0$-semistable are one-dimensional and so the wall-crossing
  formula \eqref{wcross} holds.  Furthermore, the fixed point
  components have structure group that reduces to
  $G_\zeta/\C^\times_\zeta$, which as such that objects in the fixed
  point components have trivial stabilizer.  By induction we may
  assume that the abelianization formula holds for structure groups
  $G_\zeta/\C^\times_\zeta$ of lower dimension, and in particular for
  the invariants $\tau_{X,\zeta}^{G_\zeta}(\alpha,\rho_0)$.  The
  result for other chambers $\rho \in (\rho_i,\rho_{i+1})$ holds by
  the wall-crossing formula Theorem \ref{gwall} since, by the
  inductive hypothesis, the wall-crossing terms are equal.  The
  conclusion for the git quotients then follows from the adiabatic
  limit theorem \ref{Jresults}.

  In the case of quasiprojective $X$ we assume that $G$ has a central
  factor $\C^\times \subset G$ and the moment map $\Phi: X \to \R$ for
  this factor on $X$ is bounded from below.  \label{quasiprojective}
  Then a similar wall-crossing argument obtained by varying the
  polarization $\lambda(t) \in H^2_G(X)$ from $\lambda(0) = \omega$ to
  to a chamber corresponding to an equivariant symplectic class
  $\lambda \in H^2_G(X)$ where $X \qu_{\lambda(1)} G$ is
  empty, produces the same result \cite{cross}. Indeed the moduli space
  of gauged maps $\ol{\M}_n^G(\P^1,X)$ for the polarization
  $\lambda(1)$ is also empty, since for $\rho$ small elements of
  $\ol{\M}_n^G(\P^1,X)$ must be generically semistable.  On the other
  hand, the wall-crossing terms correspond to integrals over gauged
  maps whose structure group $G_\zeta$ is the centralizer of some
  one-parameter subgroup generated by a rational element
  $\zeta \in \t$.  By induction on the dimension of $G_\zeta$ we may
  assume that the wall-crossing terms for $\ol{\M}_n^G(\P^1,X)$ and
  $\ol{\M}_n^T(\P^1,X)$ are equal and we obtain the abelianization
  formula by induction.

  For the localized potentials the same wall-crossing argument applied
  to the $\C^\times$-fixed point components
  $\ol{\M}_n^G(\P^1,X)^{\C^\times}$ and
  $\ol{\M}_n^T(\P^1,X)^{\C^\times}$ produces the abelianization
  formula
 \begin{multline}
 \chi( \ol{\M}_n^G(\P^1,X)^{\C^\times}, V \otimes \Eul(\nu_G)^{-1})
  \\= 
|W|^{-1} \chi( \ol{\M}_n^T(\P^1,X)^{\C^\times}, V \otimes \Eul(\Ind(\g/\t)))  \end{multline}
for any equivariant $K$-class $V$, where $\nu_G,\nu_T$ are the normal
bundles for the inclusion of the fixed point sets of the action of
$\C^\times$.  \label{nu} This formula holds as well after restricting
to fixed point components with markings $z_1,\ldots,z_n$ mapping to
$0 \in \P^1$ and $z_{n+1}$ mapping to $\infty$, and taking $V$ to be a
class of the form
$ \ev_1^* \alpha \otimes \ldots \ev_n^* \alpha \otimes \ev_{n+1}^*
\alpha_0$.
One obtains the formula (with superscript $\on{class}$ denoting the
classical Kirwan map)
 \begin{eqnarray*}
 \lan \tau_{X,-}^{G}(\alpha), \kappa_{X,G}^{\on{class}}( \alpha_0)  \ran
&=& 
|W|^{-1} \lan 
  \tau_{X,-}^{T,\g/\t}(\Restr^G_T \alpha), \kappa_{X,T}^{\on{class}}
    (\alpha_0) \ran \\
&=& \lan 
\Restr_G^T  \Eul(\g/\t)^{-1}  \tau_{X,-}^{T,\g/\t}(\Restr_T^G \alpha) ,
                                                                       \kappa_{X,G}^{\on{class}} (\alpha_0 )
                                                \ran 
\end{eqnarray*}
(the second by Martin's formula \cite{mar:sy}) from which the localized
abelianization formula \eqref{lab} follows.
\end{proof}

\begin{example} 
\label{grassex}
The fundamental solution in quantum K-theory for the Grassmannian
\(G(r,n)\)
is studied in Taipale \cite[Theorem 1]{taipale}, Wen \cite{wen}, and
Jockers, Mayr, Ninad, and Tabler \cite{jockers}.  Let
\[ X = \Hom(\C^r,\C^n), \quad G = GL(r) . \]
The group $G$ acts on $X$ by composition on the right:
$gx = x \circ g$.    Choose a polarization $\cL = X \times \C$
corresponding a positive central character of $G$. The semistable
locus is then 
\[ X^{\ss} = \{ x \in X \ | \ \rank(x) = r \} \]
and the git quotient
\[ X \qu G \cong G(r,n) .\]
The torus $T = (\C^\times)^r$ is a maximal torus of 
$G$ and the git quotient by the maximal torus is 
\[ X \qu T \cong (\P^{n-1})^r . \]
We claim that the localized gauged potential $\tau^G_{X,-}$ is
  the restriction of the $\Ind(\g/\t)$-twisted potential
  $\tau_{X,-}^{T,\g/\t}$ given by
  \begin{multline} \label{taipale} \tau^{T,\g/\t}_{X,-}(\alpha,q,z) =
    \sum_d q^d \tau^{T,g/\t}_{X,-,d}(\alpha,q,z) \prod_{1 \leq i < j
            \leq r} (1 - X_i X^\dual_j) X^{2\rho} 
    \\
    \tau^{T,\g/\t}_{X,-,d}(\alpha,q,z) := \sum_{d_1 + \ldots + d_r =
      d} 
\exp \left( \frac{ \Psi_d(\alpha)}{1 -
        {z}^{-1}} \right) (-1)^{d(r-1)} z^{ \lan d + \rho ,d + \rho \ran  - \lan \rho,\rho \ran} \\ \left( \frac{ \prod_{i < j}
        (1 - X_i X^\dual_j {z}^{d_i - d_j})}{ \prod_{i=1}^r \prod_{l =
          1}^{d_i} (1 - X_i {z}^l)^n } \right) .\end{multline}
  Without the factors $1 - X_i X^\dual_j {z}^{d_i - d_j}$ and
  $1 - X_i X^\dual_j$ the expression \eqref{taipale} in the Lemma
  would be the formula \eqref{Ifun} for $\tau_{X,-}^T$ discussed
  previously in the toric setting.  The additional factors are given
  by the Euler class of the index bundle
  \begin{eqnarray} 
 \label{eulind} \Eul (\Ind(\g/\t)) &=&
\frac{
 \prod_{i < j } \prod_{k= 0}^{d_j - d_i} ( 1 - X_i X^\dual_j 
    {z}^k ) }
{ \prod_{i < j  } \prod_{k= 1}^{d_j - d_i - 1} (  1 - X_j X^\dual_i 
    {z}^k ) } 
 \\
  &=&\nonumber  
\frac{
 \prod_{i < j } \prod_{k= 0}^{d_j - d_i} (1 - X_i X^\dual_j 
    {z}^{-k} )}
{ \prod_{i < j } (-1)^{d_j - d_i - 2}
\prod_{k=1}^{d_j - d_i - 1} z^{-2k} X_j X_i^\dual ( 1 - X_i X_j^\dual z^k  )} 
\\ 
&=&  \nonumber  (-1)^{d(r-1)} z^{ \sum_{i < j}  -2 (d_j - d_i) (d_j - d_i - 1)/2} 
 \\ && \prod_{i < j}   X^{2 \rho} (1 - X_i X_j^\dual) (1 - X_i X_j^\dual 
    z^{d_j - d_i}).  \\
&=&  \nonumber  (-1)^{d(r-1)} z^{ \lan d,d \ran  + 2 \lan d,\rho \ran} 
\\ && \prod_{i < j}  
  X^{2 \rho} (1 - X_i X_j^\dual) (1 - X_i X_j^\dual 
    z^{d_j - d_i})\\
&=& \nonumber
 (-1)^{d(r-1)} z^{ \lan d + \rho ,d + \rho \ran  - \lan \rho,\rho
    \ran}  \\&&
\prod_{i < j}  
  X^{2 \rho} (1 - X_i X_j^\dual) (1 - X_i X_j^\dual 
    z^{d_j - d_i})
\end{eqnarray}
where $\lan \cdot, \cdot \ran$ is the Killing form.  Note the missing
factor of $X^{2\rho}$ in \cite[(20)]{taipale}; this factor re-appears
in \cite[(31)]{taipale} but without the powers of $z$.

As pointed out to us by M. Zhang, the additional factors arising from
the $\rho$-shift in \eqref{eulind} vanish when one uses the level
$-1/2$-theory introduced by Ruan-Zhang \cite{rz:level}. It would be
interesting to know how the relations depend on the level structure,
and whether at level $-1/2$ the relations can be found using the
difference module structure in \eqref{ddrel}; see Jockers, Mayr,
Ninad, and Tabler \cite{jockers} for further developments.
\end{example}

\def\cprime{\('\)} \def\cprime{\('\)} \def\cprime{\('\)}
\def\cprime{\('\)} \def\cprime{\('\)} \def\cprime{\('\)}
\def\polhk#1{\setbox0=\hbox{#1}{\ooalign{\hidewidth
  \lower1.5ex\hbox{`}\hidewidth\crcr\unhbox0}}}
  \def\cprime{\('\)} \def\cprime{\('\)} \def\cprime{\('\)}
  \def\cprime{\('\)}

\end{document}